\newcommand*{\ab}{\allowbreak}
\newlist{enumenun}{enumerate}{1}
\newlist{eninline}{enumerate*}{1}
\setlist[enumerate,itemize,enumenun]{
	itemsep=4pt plus 2pt minus 2pt,
	topsep=\itemsep,
	listparindent=\parindent, parsep=0pt,
	labelindent=\parindent,
	align=left,
	leftmargin=*,
	}
\setlist[enumenun]{
	nosep,
	label=\normalfont(\alph*),
	ref=(\alph*),
	}
\newcommand*{\lsf}{1.7} 
\newcommand*{\jf}{1.7} 
\newcommand*{\lef}{1.83} 
\newlength{\liststartpad}
\newlength{\itemjoin}
\newlength{\listendpad}
\setlist[eninline]{
	before={\unskip\hspace{\liststartpad}},
	itemjoin={\hspace{\itemjoin}},
	after={\hspace{\listendpad}\ignorespacesafterend},
	}
\newlength{\inxsep}\setlength{\inxsep}{3.7 pt}
\newlength{\inysep}\setlength{\inysep}{2 pt}
\newlength{\et}\setlength{\et}{0.5pt} 
\newlength{\vxr}\setlength{\vxr}{1.8pt} 
\newlength{\vxrs}\setlength{\vxrs}{0.833\vxr}
\newcommand*{\co}{coordinate}
\tikzset{every picture/.style={line width=\et}}
\tikzset{vxe/.style={
		minimum size=2*\vxr, 
		circle, 
		line width=0pt,
		inner sep=0pt,
		outer sep=0pt,
}}
\tikzset{vx/.style={vxe, fill, draw}} 
\tikzset{vxs/.style={vx, minimum size=2*\vxrs}} 
\tikzset{sp/.style={
		inner xsep = \inxsep,
		inner ysep = \inysep,
}}
\tikzset{spv/.style={
			inner xsep = \inxsep + \vxr,
		 	inner ysep = \inysep + \vxr,
		 	}}
\newcommand{\vx}{node[spv]{} node[vx]{}} 
\newcommand{\vxs}{node[spv]{} node[vxs]{}} 
\newcommand*{\lrg}[1]{\scalebox{1.2}{#1}}
\newcommand*\Tstrut{\rule{0pt}{2.4ex}}  
\newcommand*\Tstrutt{\rule{0pt}{2.8ex}}  
\newcolumntype{L}{>{$}l<{$}}
\newcolumntype{R}{>{$}r<{$}}
\newcolumntype{C}{>{$}c<{$}}
\newcolumntype{\LX}{>{$}X<{$}}
\newcolumntype{\Lr}{>{$}l<{$\hspace*{-\tabcolsep}}} 
\newcolumntype{\Rr}{>{$}r<{$\hspace*{-\tabcolsep}}} 
\newcolumntype{\Rlr}{>{\hspace*{-\tabcolsep}$}r<{$\hspace*{-\tabcolsep}}} 
\newcolumntype{\Ll}{>{\hspace*{-\tabcolsep}$}l<{$}} 
\newcolumntype{\LlX}{>{\hspace*{-\tabcolsep}$}X<{$}}
\newcommand*{\G}{\mathcal{G}}
\newcommand*{\D}{\mathcal{D}}
\newcommand*{\B}{\mathcal{B}}
\newcommand*{\rv}{\mathrm{v}} 
\newcommand*{\n}{\mathrm{n}}
\newcommand*{\e}{\mathrm{e}}
\newcommand*{\piv}{\pi^{\mathrm{v}}}
\newcommand*{\pie}{\pi^{\mathrm{e}}}
\newcommand*{\dv}{d^{\kern0.1em\mathrm{v}}}
\newcommand*{\ph}{{\mkern2mu \cdot \mkern2mu}}  
\renewcommand{\dag}{\dagger\kern-0.1em} 
\newcommand*{\twodots}{\mathrel{{.}\,{.}}\nobreak} 
\newcommand*{\divides}{\mathrel{\mid}\nobreak}
\DeclarePairedDelimiter\ceil{\lceil}{\rceil}
\DeclarePairedDelimiter{\abs}{|}{|}
\DeclarePairedDelimiter{\mset}{[}{]} 
\newcommand*{\cmark}{\textcolor{Green4}{\ding{51}}}%
\newcommand*{\xmark}{\textcolor{Red4}{\ding{55}}}
\newcommand*{\2}{\kern0.111em}
\mathchardef\UrlBreakPenalty=100 
\mathchardef\UrlBigBreakPenalty=150 
\definecolor{DarkNavy}{rgb}{0,0.34,0.54}
\theoremstyle{plain}
\newtheorem{thm}{Theorem}[section]
\newtheorem{prop}[thm]{Proposition}
\newtheorem{lem}[thm]{Lemma}
\newtheorem{cor}[thm]{Corollary}
\newtheorem{falsethm}[thm]{``Theorem''}
\newtheorem{conj}{Conjecture}
\theoremstyle{remark}
\newtheorem*{rmk}{Remark}
\newtheorem*{rmks}{Remarks}
\theoremstyle{definition}
\newtheorem{dfn}{Definition}
\newtheorem{exmp}{Example}[section] 
\newtheorem*{prb}{Problem}
\newtheorem*{qst}{Question}
\title{A new framework for identifying most reliable graphs\\
	and a correction to the \texorpdfstring{$K_{3,3}$}{K33}-theorem}
\author{Lorents F.\ Landgren\thanks{Chalmers University of Technology and University of Gothenburg, Sweden. Email: lorents@chalmers.se}
	\and Jeffrey E.\ Steif\thanks{Chalmers University of Technology and University of Gothenburg, Sweden. Email:
		steif@chalmers.se}}
\begin{document}
	
\setlength{\liststartpad}{\lsf\fontdimen2\font plus \lsf\fontdimen3\font minus \lsf\fontdimen4\font}
\setlength{\itemjoin}{\jf\fontdimen2\font plus \jf\fontdimen3\font minus \jf\fontdimen4\font}
\setlength{\listendpad}{\lef\fontdimen2\font plus \lef\fontdimen3\font minus \lef\fontdimen4\font}

\maketitle

\vspace{4ex}

\begin{abstract}
Given a multigraph~$G$, the all-terminal reliability~$R(G,p)$ is the probability that $G$ remains connected under percolation with parameter~$p$. Fixing the number of vertices~$n$ and edges~$m$, we investigate which graphs maximize~$R(G,p)$---such graphs are called \emph{optimal}---paying particular attention to uniqueness and to whether the answer depends upon~$p$. We generalize the concept of a \emph{distillation} and build a framework with which we identify all optimal graphs where $m-n\in\{1,2,3\}$. These graphs are uniformly optimal in~$p$. Most have been previously identified, but with serious problems, especially when $m-n=3$. We obtain partial results for $m-n\in\{4,5\}$.

For $m-n=3$, the optimal graphs were incorrectly identified by Wang in 1994, in the infinite number of cases where $m\equiv5\pmod{9}$ and $m\geq14$. This erroneous result concerns subdivisions of~$K_{3,3}$ and has been cited extensively, without any mistake being detected. While optimal graphs were correctly described for other~$m$, the proof is fundamentally flawed. Our proof of the rectified statement is self-contained.

For $m-n=4$, the optimal graphs were recently shown to depend upon~$p$ for infinitely many~$m$. We find a new such set of $m$‑values, which gives a different perspective on why this phenomenon occurs and leads us to conjecture that uniformly optimal graphs exist only for finitely many~$m$. However, for $m-n=5$, we conjecture that there are again infinitely many uniformly optimal graphs.
\end{abstract}

\newpage

\null\vspace{3ex}

\tableofcontents

\newpage

\section{Introduction}

\subsection{The problem of reliability}
\label{ss.intro}
Given a fixed number of vertices and edges, which graphs are the most likely to remain connected after edge-percolation with parameter $p\in (0,1)$? This problem was studied by Kelmans~\cite{Kelmans} and later independently formulated in~\cite{Bauer}, where it was described as the design or synthesis of reliable networks.

Throughout this paper, we allow a~graph to have multiple edges and loops. An $(n,m)$-graph is a graph with $n$~vertices and $m$~edges, counting multiplicity.
We will focus on graphs such that $m-n$ is small; we call this quantity \emph{exceedance} and denote it by~$k$. (For connected graphs, the exceedance equals what is called the \textit{corank} plus one.) We let $\G_{n,m}$ (or $\G_{n,n+k}$) denote the set of connected $(n,m)$-graphs.

\begin{dfn}[Reliability]
	\label{d.reliability}
	The reliability function of~$G$, denoted by $R(G,p)$, is the probability that the graph~$G$ remains connected under percolation with parameter~$p$ (the probability for each edge to remain). If $R(H,p)>R(G,p)$, where $H,G\in \G_{n,m}$, we say that $H$ is strictly more reliable than $G$ with respect to~$p$. If this holds for all $p\in(0,1)$, then $H$~is strictly more reliable than~$G$.
\end{dfn}

\begin{prb}\hypertarget{prb}{}
	Given $n$, $m$ and $p$, find the \emph{$p$‑optimal graphs}, defined as the graphs in~$\G_{n,m}$ which maximize~$R(G,p)$.
\end{prb}

\begin{figure}[htb]
	\vspace{-\parsep}
	\centering
	\begin{tikzpicture}[x=0.12cm,y=0.12cm]
		\def\r{4.5} \def\s{\r*3}
		\foreach \v in {0,72,...,288}{
			\draw ({\r*cos(\v)},{\r*sin(\v)})  \vxs -- ({\r*cos(\v+72)},{\r*sin(\v+72)});
		}
		\foreach \v in {0,72,...,288}{
			\draw ({-\r*cos(\v)+\s},{-\r*sin(\v)}) \vxs -- ({-\r*cos(\v+72)+\s},{-\r*sin(\v+72)});
		}
		\draw (\r,0) -- (\s-\r,0);
	\end{tikzpicture}
	\hspace{36pt}
	\begin{tikzpicture}[x=0.12cm,y=0.12cm]
		\def\r{4.3}
		\foreach \v in {0,120,240}{
			\draw ({\r*cos(\v)},{\r*sin(\v)}) \vxs -- ({\r*cos(\v+120)},{\r*sin(\v+120)});
		}
		\foreach \v in {0,45,...,315}{
			\draw ({-\r*cos(\v)+2*\r},{-\r*sin(\v)}) \vxs -- ({-\r*cos(\v+45)+2*\r},{-\r*sin(\v+45)});
		}
	\end{tikzpicture}
	\caption{For sufficiently small (large)~$p$, the left (right) graph is strictly more reliable.}
	\label{f.handcuffs}
\end{figure}

Boesch~\cite{Boesch} defined a Uniformly Most Reliable Graph (UMRG) as an $(n,m)$-graph which is $p$‑optimal for every~$p$. A~recent survey of what is known about UMRGs is due to Romero~\cite{Rsurvey}. Noting that the definition of a~UMRG involves a non-strict inequality, we propose the following stronger notion:

\begin{dfn}[Unique optimality]
	$G \in \G_{n,m}$ is a uniquely optimal graph if $G$~is strictly more reliable than all other graphs in~$\G_{n,m}$.
\end{dfn}

\begin{figure}[hb!]
	\centering
	\vspace{-\parsep}
	\begin{tikzpicture}[x=0.12cm,y=0.10cm]
		\draw (0,0) \co(a) \vxs -- (3,3) \vxs -- (7,4) \vxs --(11,3) \vxs --(14,0) \co(b) \vxs (a) -- ($(a)!0.33!(b)$) \vxs -- ($(a)!0.66!(b)$) \vxs -- (b) (a) --(3,-3) \vxs -- (7,-4) \vxs --(11,-3) \vxs -- (b);
	\end{tikzpicture}
	\caption{The uniquely optimal $(10,11)$-graph has parallel chains of lengths 4, 3 and~4.}
	\label{f.3pc}
\end{figure}

Unique optimality is equivalent to unique $p$-optimality for all values of~$p$. This is logically a stronger property than for a UMRG to be unique, which is of~course stronger than simply being UMRG. The only known case where UMRGs are not unique is the case of trees (as noted below, every tree is a UMRG). Whether unique UMRGs are necessarily uniquely optimal seems to be a nontrivial problem.

It was conjectured in~\cite{Boesch} that a UMRG always exists, but an infinite family of almost complete graphs disproving the conjecture had already been described by~\cite{Kelmans}. Similar counterexamples were independently and elegantly demonstrated in~\cite{MyrvoldShort}. Although~\cite{MyrvoldShort}, like \cite{Boesch}, were working in the context of simple graphs, an additional argument can be made to show that there are no UMRGs with multiple edges in the relevant families $\G_{n,m}$. For the particular case of $(6,11)$-graphs, see Proposition~\ref{p.myrvold}. For an example of simply how the relative reliability of two graphs can depend upon~$p$, the reader might ponder the $(10,11)$-graphs in Figure~\ref{f.handcuffs}. 

The present paper will focus on families of graphs of low exceedance. The solution to our \hyperlink{prb}{Problem} is easy for the smallest possible values of $m-n = k$. A~tree on $n$~vertices has exceedance~$-1$ and reliability $R(G,p)=p^{n-1}$; thus, every tree is trivially a~UMRG, which for $n\geq 4$ is not unique. Proceeding to consider $k=0$, the reader may convince themself that the set of cycles constitute a family of uniquely optimal graphs for the $\G_{n,n}$‑sets, where $n\geq 1$.

For $k\in\{1,2,3\}$, it turns out that there is always a uniquely optimal graph. For $k=1$, it is well known that a construction given in~\cite{Bauer} gives a~UMRG for each size~$m$. These graphs consist of three parallel paths with as equal length as possible, as in Figure~\ref{f.3pc}. Finding the uniquely optimal graphs when $k=2$ and $k=3$ takes more work. Most of the graphs have previously been identified as UMRGs in the literature, but there are serious problems. In the case of $k=3$, an infinite number of graphs were erroneously identified as UMRGs in~\cite{Wang94} through ``Theorem''~\ref{falsethm}. The smallest such mistaken UMRG is the $(11,14)$-graph $G$, shown in Figure~\ref{f.misidentified} together with $H$, the true UMRG. For both $k=2$ and $k=3$, we will point out several flaws in previous proofs and statements and provide independent and self-contained characterizations of the uniquely optimal graphs.

\begin{figure}[tb!]
	\centering
	\includegraphics{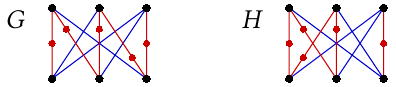}
	\caption{$G$ is the smallest in an infinite sequence of graphs which have wrongly been considered optimal since 1994, when Wang ``proved'' a 1991 conjecture by Boesch, Li and Suffel. The true uniquely optimal $(11,14)$-graph is shown to be~$H$.}
	\label{f.misidentified}
\end{figure}

With increasing $k$, the problem becomes exceedingly complex. For $k=4$, there are infinitely many~$m$ where no UMRG exists. For $k=5$, little is known.

In a recent paper by Kahl and Luttrell~\cite{Tutte}, the uniquely optimal graphs for $k\in\{0,1,2\}$ were shown to in a certain sense maximize the \emph{Tutte polynomial}, and hence also maximize many graph parameters obtainable from the Tutte polynomial (the reliability function being one). The $(k=3)$-graphs of Wang~\cite{Wang94} were then conjectured to similarly maximize the Tutte polynomial. This conjecture was amended in the update~\cite{KahlUpdate} to apply to the true uniquely optimal $(k=3)$-graphs identified by Theorem~\ref{t.k3} below.

\subsection{Summary of the paper}
In Sections~\ref{ss.graphs} and~\ref{ss.perc}, we provide basic background concerning graph theory and the reliability function. In Section~\ref{ss.bridges}, we show that except for trees, an optimal graph can never contain a~bridge. In Section~\ref{s.framework}, we introduce the proper distillation of a~graph and then the more general notion of a weak distillation. This gives us graphs with a simple structure from which more general graphs can be constructed. In Section~\ref{s.k=1}, we demonstrate how the uniquely optimal graphs can easily be obtained when $k=1$ (i.e.~$m-n=1$).

In Section~\ref{s.maindist}, we introduce an equivalence relation on graphs with the following properties.
\begin{eninline}[label={(\arabic*)}]
	\item Equivalent graphs have identical reliability functions.
	\item Every bridgeless graph has an equivalent graph with a weak distillation which is 3‑edge-connected and cubic.
\end{eninline}
This allows us to restrict attention to graphs which can be built from 3-edge-connected cubic graphs, which is crucial to obtain our main results.

In Section~\ref{s.minimize} we describe, for general $n$~and~$m$, the graphs which minimize the number of disconnecting edge sets of size~2, and within restricted sets of graphs those which minimize the number of disconnecting sets of size~3. In Section~\ref{s.move}, we study how moving an edge within a~graph affects the number of disconnecting sets.

In Section~\ref{s.k=2}, we study the case $k=2$ and obtain the uniquely optimal graphs. In Section~\ref{s.k=3}, we study the case $k=3$ and provide the first correct characterization of its uniquely optimal graphs. Finally, in Section~\ref{s.k=4,5}, we provide partial results and conjectures for the cases $k=4$ and $k=5$.

\section{Preliminaries}

\subsection{Graph theoretical essentials}
\label{ss.graphs}
For standard notions of graph theory we refer to Diestel~\cite{Diestel}. In particular, we will use the following notation and definitions. All graphs are connected finite multigraphs, which may contain loops. We use the word \emph{multigraph} only to contrast with simple graphs when discussing results from the literature.

A general graph with $n$~vertices and $m$~edges has exceedance~$k$, defined by $m-n$, and will be referred to as an $(n,m)$-graph, $(n,n+k)$-graph or $(m-k,m)$-graph, depending on convenience. The \emph{size} of a~graph is its number of edges. An $m$‑path has $m$~edges and the $r$‑star is the complete bipartite graph $K_{1,\mkern2mu r}$. The $m$‑dipole consists of two vertices connected by $m$~edges and the $m$‑bouquet is a single vertex with $m$~loops.

A cubic graph of exceedance $k$ has $2k$~vertices and $3k$~edges. To see this, use the degree sum formula with $\abs{E(G)} = n+k$ and simplify to obtain $n=2k$.

A \emph{spanning subgraph} of~$G$ contains all the vertices of~$G$. The number of spanning trees of~$G$ is denoted by $t(G)$. A~\emph{matching} of~$G$ is a set of disjoint edges of~$G$. A~matching is \emph{perfect} if it spans~$G$.

If $G$~contains the edge~$e$ (or edge set~$E$), then $G-e$ ($G - E$) is the graph resulting from the \emph{deletion} of this (these) edges. An (edge-)disconnecting set or \emph{disconnection} of~$G$ is a set of edges whose deletion disconnects~$G$. Every disconnection contains at least one \emph{cut}, which is the set of edges crossing a non-trivial partition of the vertices into two sets called \emph{sides}. Minimal cuts are called \emph{bonds}. An $i$‑disconnection is a disconnection of size~$i$, and we define an $(i,j)$-disconnection to be an $i$‑disconnection in which the smallest bond has size~$j$. We let $d_i(G)$ denote the number of $i$‑disconnections, $d_{i,j}(G)$ denote the number of $(i,j)$-disconnections and $b_i(G)$ denote the number of $i$‑bonds of~$G$.

The symmetric difference of two distinct cuts is a~cut \cite[Prop.\ 1.9.2]{Diestel}, and every cut is a disjoint union of bonds \cite[Lemma 1.9.3]{Diestel}.

The edge-connectivity of~$G$, denoted $\lambda(G)$, is the size of the smallest bond. A~graph with edge-connectivity at least~$\mu$ is said to be $\mu$‑edge-connected. An edge $e\in G$ is a~bridge if the deletion of~$e$ disconnects~$G$. Note that for a connected graph, \emph{bridgeless} is the same as \emph{2‑edge-connected}.

The following definition of \emph{cutvertex} includes a vertex with one or more loops (except if the entire graph is just one vertex with one loop). 
For loopless graphs, this definition is equivalent to the more common one, where the removal of the cutvertex disconnects the graph. A~\emph{block} is a maximal connected subgraph which has no cutvertex of its own.

\begin{dfn}[Cutvertex] 
	A vertex $v\in G$ is a cutvertex if the edges of~$G$ can be partitioned into two nonempty sets such that $v$~is the only vertex incident with at least one edge in each set.
\end{dfn}

\subsection{Reliability}
\label{ss.perc}
We use the standard notion of percolation, in which edges are independently retained with probability $p\in (0,1)$ and otherwise deleted.

Let $c_i(G)$ denote the number of connected spanning subgraphs of~$G\in\G_{n,m}$ with $m-i$~edges. Noting that $c_i(G) = 0 $ for $i>k+1$, we see that the reliability function can be expressed as
\begin{equation}
	\label{ccoeff}
	R(G,p)=\sum_{i=0}^{k+1} c_i(G)p^{m-i}(1-p)^i\,.
\end{equation}

While our results will be given in terms of the reliability function, it is often more practical to work with the complementary probability~$U(G,p)$, where \emph{U} is for \emph{unreliability}. Recalling that $d_i(\ph)$ denotes the number of $i$‑disconnections,
\begin{equation}
	\label{dcoeff}
	U(G,p)=\sum_{i=1}^{m}d_i(G)p^{m-i}(1-p)^i\,.
\end{equation}
Note that $d_i(G)=0$ for $i<\lambda(G)$, that $d_i(G)=\binom{m}{i}$ for $i > k+1$, and that $c_i(G)+d_i(G) = \binom{m}{i}$ for all $i\in [0 \twodots m ]$. (We use the notation $[a \twodots b]$ for the closed integer interval from $a$ to~$b$.) 

It follows from the above that a sufficient condition for a graph $H\in \G_{n,m}$ to be strictly more reliable than a graph $G \in \G_{n,m}$ is the following set of inequalities, where at least one is strict:
\begin{equation}
	\label{suff}
	d_i(H)\leq d_i(G)\,, \mathrlap{\qquad  \forall i\in [1 \twodots k+1]\,.}
\end{equation}
It was recently shown by Graves~\cite{Graves} that this sufficient condition is not necessary.

A sufficient condition for $H$ to be uniquely optimal is consequently that \eqref{suff} holds for all $G \in \G_{n,m} \!\setminus \{H\}$, with at least one strict inequality for each~$G$. Without the final requirement, we have a sufficient condition for~$H$ to be a~UMRG. Whether this condition is necessary is not known~\cite{Rsurvey}.

It is instructive to consider the graph theoretic properties which govern the reliability function for large~$p$ and for small~$p$. Note that $\lambda(G)$ can be expressed as $\min\{ i:d_i(G) >0\}$ and that the number of spanning trees of~$G$ equals $c_{k+1}(G)$. The following proposition is therefore immediate.

\begin{prop}
	\label{p.largesmallp}\ 
	\begin{enumenun}
		\item If the edge-connectivity of~\(H\) is strictly larger than that of~\(G\), then, for sufficiently large~\(p\),
		\begin{equation}
			\label{largesmallp}
			R(H,p) > R(G,p)\,.
		\end{equation}
		
		\item\label{pp.spanningtrees}
		If the number of spanning trees of~\(H \) is strictly larger than that of~\(G \), then~\eqref{largesmallp} holds for sufficiently small~\(p \).
	\end{enumenun}
\end{prop}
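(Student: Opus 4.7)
The plan is to exploit the polynomial expansions \eqref{ccoeff} and \eqref{dcoeff} and read off the dominant term at each endpoint of the interval~$(0,1)$. In both cases the inequality in \eqref{largesmallp} will follow once we identify the leading monomial and observe that it carries the correct hypothesis.

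For part~(a), I work with the $U$-expansion in~\eqref{dcoeff}. Because every disconnection must contain a bond, we have $d_i(G)=0$ for $i<\lambda(G)$ and $d_{\lambda(G)}(G)\geq 1$. Therefore, as $p\to 1^-$,
\[
\frac{U(G,p)}{(1-p)^{\lambda(G)}} \;\longrightarrow\; d_{\lambda(G)}(G) > 0,
\]
since every $i>\lambda(G)$ contributes a strictly higher power of $(1-p)$ and the factors $p^{m-i}$ tend to~$1$. Under the hypothesis $\lambda(H)>\lambda(G)$ the same ratio for~$H$ tends to~$0$. Combining these gives $U(H,p)<U(G,p)$, equivalently~\eqref{largesmallp}, for all $p$ close enough to~$1$.

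For part~(b), I work directly with \eqref{ccoeff}. A connected spanning subgraph has at least $n-1$ edges, so $c_i(G)=0$ for $i>k+1$, and $c_{k+1}(G)=t(G)$ by the identification noted just before the proposition. The term of lowest degree in~$p$ is therefore $t(G)\,p^{n-1}(1-p)^{k+1}$, and as $p\to 0^+$,
\[
\frac{R(G,p)}{p^{n-1}} \;\longrightarrow\; t(G).
\]
If $t(H)>t(G)$, then $R(H,p)>R(G,p)$ for all sufficiently small~$p$, which is~\eqref{largesmallp}.

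There is no real obstacle; both parts are a single ``leading monomial'' computation, which is precisely why the author labels the result ``immediate''. The only point requiring care is to compare the ratios $U(\ph,p)/(1-p)^{\lambda(G)}$ and $R(\ph,p)/p^{n-1}$ rather than the values themselves, so that the asymptotic dominance of a single monomial translates into a strict inequality on an entire one-sided neighborhood of the endpoint in question.
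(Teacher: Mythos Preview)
Your proof is correct and is exactly the argument the paper has in mind: the text immediately preceding the proposition records that $\lambda(G)=\min\{i:d_i(G)>0\}$ and $t(G)=c_{k+1}(G)$, and then declares the proposition ``immediate'' from the expansions \eqref{ccoeff} and~\eqref{dcoeff}. You have simply written out the leading-monomial comparison that the paper leaves implicit.
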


\subsection{Graphs with bridges are not \texorpdfstring{$p$}{p}-optimal}
\label{ss.bridges}
Graphs with bridges (except for trees) cannot be $p$‑optimal.
Variants of this result can be found e.g.\ in~\cite{Rsurg} and \cite{Wang94}. However, we have not been able to obtain the exact content we need from the literature. A~similar result involving cutvertices instead of bridges is possible, but the following will be sufficient for our purposes.

\begin{prop}
	\label{p.bridge}
	If \(G\in \G_{n,n+k}\) with \(n \ge 2\) and \(k \ge 0\) has a~bridge, then there exists a graph \(G'\in \G_{n,n+k}\) such that \(d_i(G') < d_i(G)\) for all  \(i\in [1 \twodots k+1] \). Hence, any \(p\)‑optimal \((n,n+k)\)-graph is bridgeless.
\end{prop}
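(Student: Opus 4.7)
The plan is to construct $G'$ from $G$ by contracting the bridge $e$ and then subdividing a non-bridge edge $f$ of $G/e$. Such an $f$ exists because $G/e$ has exceedance~$k\ge 0$, so it contains a cycle and hence a non-bridge edge. The operation preserves the vertex and edge counts and keeps the graph connected, so $G'\in\G_{n,n+k}$.

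I would next derive the reliability identity
\[R(G',p)-R(G,p)=p(1-p)\,R(G/e-f,p).\]
For the bridge $e$, $R(G,p)=p\,R(G/e,p)$. For $G'$, condition on the states of the two edges $f_1,f_2$ at the new subdivision vertex~$x$: when neither is present $x$ is isolated, when exactly one is present $G'$ behaves as $G/e-f$, and when both are present $G'$ behaves as the doubly contracted graph $(G/e)/f$. Combining these cases and using the deletion--contraction identity $R(G/e,p)=p\,R((G/e)/f,p)+(1-p)R(G/e-f,p)$ to eliminate the doubly contracted term collapses the expression into the displayed form.

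Since $\{p^{m-i}(1-p)^i\}_{i=0}^m$ is linearly independent, comparing coefficients in this identity yields
\[d_i(G)-d_i(G')=c_{i-1}(G/e-f)\qquad(i\ge 1).\]
The graph $G/e-f$ is connected (because $f$ is a non-bridge of $G/e$), has $n-1$ vertices and $m-2$ edges, and therefore exceedance~$k-1$; so $c_{i-1}(G/e-f)>0$ holds precisely when $i-1\in[0,k]$. This gives the strict inequalities $d_i(G')<d_i(G)$ throughout $i\in[1,k+1]$. The ``hence'' clause follows from the sufficient condition~\eqref{suff}.

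The main work is in establishing the displayed reliability identity cleanly; once that is in hand, the coefficient extraction and the range check for $c_{i-1}(G/e-f)$ are essentially bookkeeping.
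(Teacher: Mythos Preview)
Your argument is correct and takes a genuinely different route from the paper. The paper constructs $G'$ by an explicit edge move: it picks a bridge $b=vw$ adjacent to a cycle edge $e=uv$ and swings the $v$-incidence of $e$ over to $w$; it then argues by a direct injection that every disconnecting set of $G'$ disconnects $G$, and finishes by exhibiting, for each $i\in[1\twodots k+1]$, an explicit $i$-set that disconnects $G$ but not $G'$. Your construction (contract the bridge, subdivide any non-bridge of the contraction) in general produces a different $G'$, and your proof is algebraic rather than combinatorial: the bridge identity $R(G,p)=p\,R(G/e,p)$ together with deletion--contraction on $f$ collapses to the exact formula $d_i(G)-d_i(G')=c_{i-1}(G/e-f)$. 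This buys you more than the paper asks for---an explicit value for the gap, not just positivity---and avoids the mild nuisance of locating a bridge adjacent to a cycle edge. The paper's approach, on the other hand, stays purely at the level of edge sets and needs no polynomial identities or basis arguments.
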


\begin{proof}
	Since $k\ge0$, $G$~has a~cycle. Choose a bridge $b=vw$ that is adjacent to some edge $e=uv$ which belongs to a~cycle, and then construct~$G'$ by moving an incidence of~$e$ from~$v$ to~$w$, as shown in Figure~\ref{f.bridge}.
	
	We first show that every edge set which disconnects $G'$ also disconnects~$G$, which is to say that $d_i(G') \leq d_i(G)$ for all $i \in [1 \twodots m]$. Letting $E$ be an edge set which disconnects~$G'$, there are three cases for $b$~and~$e$. If $b\in E$, it is immediate that $E$ also disconnects $G$. If $e\in E$, then $G-E$ is disconnected since $G'-e$ and $G-e$ are isomorphic. If $e\notin E$ and $b \notin E$, then clearly $G-E$ is also disconnected.
	
	Now, let $i \in [1 \twodots k+1]$. We show that there is at least one edge set~$E_i$ of size~$i$ which disconnects $G$ but not~$G'$. Since $G'-b$ is a connected $(n,n+k-1)$-graph, we can choose an edge set~$K$ of size~$k$ contained in $G'-b$ such that $G'-b-K$ is a~tree. Let $E_i$ consist of~$b$ together with $i-1$~of the edges in~$K$. Then, $G'-E_i$ is connected, but $G - E_i$ is disconnected. We conclude that $d_i(G') < d_i(G)$ for every $i \in [1 \twodots k+1]$.
\end{proof}

	\begin{figure}[t!]
	\centering
	\includegraphics{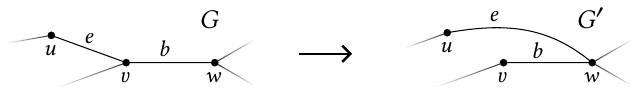}
	%
	%
	\caption{Given a graph $G$ in which $b$ is a~bridge and $e$~is a non-bridge, the surgery gives a graph~$G'$ with fewer disconnecting sets of all relevant sizes, which is therefore strictly more reliable than~$G$.}
	\label{f.bridge}
\end{figure}

\section{Distillation framework}
\label{s.framework}

\subsection{Proper distillations and chains}
\label{ss.pdist}
A critical idea introduced in~\cite{Bauer} is the conversion of a graph~$G$ into its \emph{distillation} by suppressing all vertices of degree~2. We will call this graph the \emph{proper} distillation of~$G$, since we will promptly generalize the concept to a set of \emph{weak} distillations of~$G$. We can think of the distillations as ``blueprints'' by which we can represent larger sets of graphs; in fact, it can be shown that for any~$k$, the leafless graphs of exceedance~$k$ can be represented by only finitely many proper distillations (and by the same finite set of leafless weak distillations). See Figure~\ref{f.distillations} for an example of a graph~$G$, its proper distillation~$D_1$ and some of its weak distillations. The relevant definitions are as follows.

\begin{dfn}[Vertex suppression and insertion]
	\label{d.suppins}
	A loopless 2-vertex~$v$ is \emph{suppressed} by de\-leting $v$ and joining its two neighbors by an edge. A~vertex which does not have degree two is \emph{non-suppressible}.
	The \emph{insertion} of a~vertex at the edge~$e$ is the above operation in reverse.
\end{dfn}

\begin{rmk}
	Vertex suppression and insertion are special cases of edge contraction and expansion; see Definition~\ref{d.expansion}.
\end{rmk}

\begin{dfn}[Distillation and subdivision]
	A distillation is a graph without vertices of degree~2. The \emph{proper distillation} of a non-cycle~$G$, denoted by~$D_G$, is the graph obtained by suppressing all 2‑vertices of~$G$. In turn, $G$ is a~\emph{subdivision} of~$D_G$.
	For weak distillations and weak subdivisions, see Definition~\ref{d.weaks}.
\end{dfn}

While subdivisions are usually defined for general graphs, our definition only allows for subdivisions of distillations. A~distillation and its subdivisions have the same $k$‑value, since vertex suppression and insertion preserve exceedance. Note that $G$ is bridgeless if and only if $D_G$ is bridgeless.

\begin{dfn}[Chain]
	A \emph{positive chain} of a~graph is a maximal path which contains at least one edge, does not contain its endvertices and in which every vertex has degree~2. See Figure~\ref{f.parallel}. A~\emph{chain} is either a positive chain or a \emph{zero~chain}, which will be defined in Definition~\ref{d.zerochain}. The~\emph{length} of a~chain is its number of edges.
\end{dfn}

\begin{figure}[tb!]
	\centering
	\includegraphics{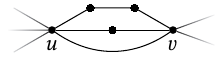}
	\caption{A part of a~graph with three parallel, positive chains of lengths 3,~2 and~1. The endvertices $u$~and~$v$ are not part of any chain; and the number of other edges incident to $u$~and~$v$ is arbitrary.}
	\label{f.parallel}
\end{figure}

Note that every edge of a graph~$G$ belongs to exactly one positive chain (an edge which is not incident to a 2‑vertex is a chain of length one). Hence, the proper distillation of~$G$ can be equivalently defined as the graph obtained by replacing each positive chain of~$G$ with an edge. Likewise, the subdivisions of~$D_G$ can be defined as all graphs obtainable by replacing the edges of~$D_G$ with positive chains.

\subsection{Weak distillations, weak subdivisions and zero chains}

We now generalize proper distillations and subdivisions to weak distillations and weak subdivisions. This will allow us to restrict our attention to cubic distillations.

The key picture is as follows: All weak subdivisions of a distillation~$D$ can be obtained by replacing the edges of~$D$ with chains, where chains can have zero length, meaning that the corresponding edge is contracted. (However, we will not allow all edges of a~cycle to be contracted.) If the chain lengths do not differ by more than one edge, the weak subdivision will be called \emph{balanced}.

\begin{figure}[tb!]
	\centering
	\includegraphics{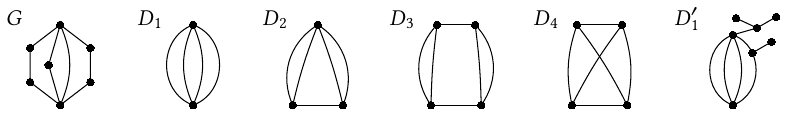}
	\caption{The $(7,9)$-graph~$G$ is a subdivision of~$D_1$, the proper distillation of $G$. The leafless weak distillations of~$G$ are $D_1$ through $D_4$, and $D'_1$ is one of infinitely many with leaves.}
	\label{f.distillations}
\end{figure}

\begin{dfn}[Edge expansion and contraction]
	\label{d.expansion}\ 
	\begin{enumenun}
		\item Expanding an edge at a vertex~$v$ means exchanging~$v$ for two adjacent vertices $v_1$ and~$v_2$, assigning each incidence with $v$ to either $v_1$ or~$v_2$. (A~loop at~$v$ is replaced either with a~loop at $v_1$~or~$v_2$ or with an additional edge between $v_1$ and~$v_2$.)
		
		\item Contracting a non-loop edge~$e$ in an $(n,m)$-graph~$G$ means removing the edge and merging its endvertices, yielding an $(n-1,m-1)$-graph denoted by~$G/e$.
		(Any edges parallel to the contracted edge become loops, which cannot be contracted.)
	\end{enumenun}
\end{dfn}

Note that edge expansion and contraction both preserve exceedance. It should also be clear that the edge-connectivity of a~graph can only decrease under edge expansion and can only increase under edge contraction.

\begin{dfn}[Weak distillation and weak subdivision]
	\label{d.weaks}\ 
	\begin{enumenun}
		\item If a distillation~$D$ can be obtained from the proper distillation of~$G$ by edge expansions, then $D$ is a \emph{weak distillation} of~$G$.
		
		\item If $G$ can be obtained from a distillation~$D$ by edge contractions and vertex insertions, then $G$ is a \emph{weak subdivision} of~$D$. 
	\end{enumenun}
\end{dfn}

It is easy to show that $G$ is a weak subdivision of~$D$ if and only if $D$ is a weak distillation of~$G$. 
Furthermore, such graphs $D$ and~$G$ have the same exceedance. See Figure~\ref{f.distillations} for examples.

The following proposition will in practice be superseded by Theorem~\ref{t.3ec}, which has an independent proof. However, we consider the below to be a natural development of ideas, and Proposition~\ref{p.cubic}\ref{pp.cubic2} will be used to easily identify the uniquely optimal $(k=1)$-graphs.
\begin{prop}
	\label{p.cubic}
	Let \(G\in \G_{n,n+k}\), where \(n\geq 1\) and \(k\geq 1\).
	\begin{enumenun}
		\item\label{pp.cubic1}
		\(G\) has a cubic weak distillation if and only if \(G\) is leafless. 
		\item\label{pp.cubic2}
		\(G\) has a bridgeless, cubic weak distillation if and only if \(G\) is bridgeless.
	\end{enumenun}
\end{prop}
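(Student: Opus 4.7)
The plan is to prove each biconditional by establishing both implications, with the forward directions given by monovariants and the backward directions constructed via iterated expansion of high-degree vertices. For the forward direction of~\ref{pp.cubic1}, I introduce the \emph{defect} $\delta(H) := \sum_{v} \max(0,\, 3 - \deg v)$, which vanishes precisely when the minimum degree of~$H$ is at least~$3$. A short case analysis on $d = \deg v$ shows that expanding~$v$ into $v_1, v_2$ with $\deg v_i = d_i + 1$ and $d_1 + d_2 = d$ replaces the contribution $\max(0, 3 - d)$ by $\max(0, 2 - d_1) + \max(0, 2 - d_2)$, which is never smaller; hence $\delta$ is non-decreasing under edge expansion. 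If $D$~is cubic then $\delta(D) = 0$, forcing $\delta(D_G) = 0$, i.e.\ $D_G$ has no vertex of degree $\leq 2$; since vertex suppression preserves the degrees of non-suppressible vertices, $G$~is leafless. The forward direction of~\ref{pp.cubic2} is immediate from the observation after Definition~\ref{d.expansion} that edge expansion cannot increase edge-connectivity: if $D$~is bridgeless then $\lambda(D_G) \geq \lambda(D) \geq 2$, so $G$~is bridgeless.

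For the backward directions I construct~$D$ iteratively starting from~$D_G$. Under~\ref{pp.cubic1}, $D_G$ has minimum degree~$\geq 3$; under~\ref{pp.cubic2}, $D_G$ is also bridgeless. While some vertex~$v$ has degree $d \geq 4$, I perform an expansion at~$v$ placing two of its $d$ incidences at a new vertex~$v_2$ (of degree~$3$) and leaving $d-2$ at~$v_1$ (of degree $d-1 \geq 3$); the result is again a distillation, and the monovariant $\sum_v (\deg v - 3)$ strictly decreases, guaranteeing termination in a cubic distillation. For~\ref{pp.cubic1} any choice of split works.

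For~\ref{pp.cubic2} the main obstacle is selecting the two $v_2$-incidences so that $D'$~remains bridgeless. I first claim the new edge $f = v_1 v_2$ is the only edge of~$D'$ that can possibly be a new bridge: for any other edge~$e$, a singleton cut $\{e\}$ in~$D'$ would have $v_1, v_2$ either on the same side (so that contracting~$f$ yields a cut $\{e\}$ in the bridgeless graph~$D$, a contradiction) or on opposite sides (so that $f$~also crosses the cut, contradicting $\{e\}$ being a singleton). Hence I only need to keep $v_1$ and~$v_2$ connected in $D' - f$. Since $D$~is bridgeless, each component of $D - v$ is joined to~$v$ by at least two edges, for otherwise its separating cut would be a bridge in~$D$. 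If some component~$C$ receives $\geq 3$ edges from~$v$, place both $v_2$-incidences among those, yielding a path from $v_1$ to $v_2$ via~$C$ in $D' - f$. Otherwise every component receives exactly two edges; if there are at least two components, place one $v_2$-incidence in each of two different components, so $v_1$ retains an edge to both. The remaining case has at most one component of $D - v$, which (if present) sends only two edges to~$v$; then $d \geq 4$ forces~$v$ to carry a loop, and splitting such a loop by assigning one of its two incidences to each of $v_1, v_2$ creates an edge parallel to~$f$ that directly prevents~$f$ from being a bridge. In each case a valid expansion exists, and iteration yields the desired cubic bridgeless weak distillation.
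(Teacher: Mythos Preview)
Your proof is correct. The main structural difference from the paper lies in the backward direction of~\ref{pp.cubic2}. The paper splits on whether the high-degree vertex~$v$ is a cutvertex: if not, $D-v$ is connected and any expansion giving both new vertices degree $\geq 3$ leaves $v_1$ and~$v_2$ joined through~$D-v$; if so, each block through~$v$ contributes at least two incidences (by bridgelessness), and sending at least one incidence from every block to each of $v_1,v_2$ keeps the new edge from becoming a bridge. You instead analyze the components of~$D-v$ together with loops at~$v$ and choose the two $v_2$-incidences by a direct case split (some component with $\geq 3$ edges; all components with exactly two and at least two of them; at most one component, forcing a loop to split). Both routes are elementary; yours avoids block and cutvertex language entirely, while the paper's is a little more uniform once that language is in hand. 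Your forward direction of~\ref{pp.cubic1} via the defect monovariant also works but is more elaborate than necessary---the paper simply notes that a leaf of~$G$ persists in~$D_G$ and that expanding at a leaf always produces another leaf.
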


\begin{proof}
	Part~\ref*{pp.cubic1}, \emph{only~if} direction: Suppose that $G$ has a leaf~$v$. Leaves are not suppressed, so $v$ remains a~leaf in~$D_G$. Expanding an edge at~$v$ creates a new leaf, so no weak distillation of~$G$ can be cubic.
	
	Part~\ref*{pp.cubic2}, \emph{only~if} direction: If $G$ has a bridgeless weak distillation, then its proper distillation is bridgeless. As previously noted, this implies that $G$ is bridgeless.
	
	Part~\ref*{pp.cubic1}, \emph{if} direction: Suppose that $G$ is leafless. Its proper distillation $D_G$ then has minimum degree~3. If $D_G$ is not cubic, pick a vertex $v\in D_G$ for which $\deg(v)\geq 4$, and expand an edge $e_v$ at~$v$ in such a~way that the two new vertices $v_1$ and~$v_2$ each receives degree at least~3. Noting that the resulting distillation remains leafless and that both $v_1$ and~$v_2$ have strictly smaller degree than~$v$, we can repeat the procedure until a cubic weak distillation of~$G$ is obtained.
	
	Part~\ref*{pp.cubic2}, \emph{if} direction: Suppose that $G$ is bridgeless, which immediately implies that $D_G$ is bridgeless. We modify the above procedure to ensure that no bridge is created by the edge expansion. Clearly, an already existing edge cannot become a~bridge. It is also easy to see that if the chosen vertex $v$ is not a cutvertex, then the expanded edge~$e_v$ cannot be a~bridge (cf.\ Figure~\ref{f.cubex2}). On the other hand, if $v$ is a cutvertex, then every block containing $v$ contributes with at least two incidences to~$v$, since $D_G$ is bridgeless. When expanding the edge~$e_v$, we let at least one incidence from each block go to each of $v_1$ and~$v_2$ (cf.\ Figure~\ref{f.cubex1}). This ensures that $e_v$~is not a~bridge. Thus, we can obtain a bridgeless cubic weak distillation of~$G$.
\end{proof}

\begin{dfn}[Zero chain]
	\label{d.zerochain}
	Let $G$ be a weak subdivision of~$D$. An edge $e\in D$ which was contracted in the construction of~$G$ corresponds to a zero-length chain in~$G$, relative to~$D$. This zero chain is located at the non-suppressible vertex of~$G$ which the endvertices of~$e$ were merged into. (See Example~\ref{ex.weak}.)
\end{dfn}

Now, given a distillation~$D$, a weak subdivision of~$D$ can be equivalently defined as a~graph which can be obtained by going through the edges of~$D$ and replacing each one with a chain of length~$\geq 0$, except for loops, including loops arising in the process, \hypertarget{zerocycle}{}which are each replaced by a chain of length~$\geq 1$ (since loops cannot be contracted).
We can therefore represent weak subdivisions of~$D$ by letting edge weights indicate chain lengths (see Corollary~\ref{c.bonds}), observing that no cycle can have total weight zero (since contracting every edge but one in a~cycle yields a~loop). In other words, weak subdivisions may have zero chains, but no ``zero cycles''.

Later on, we will have reason to consider chains which are ``adjacent'' and ``nonadjacent''. In practice, the meaning of this should be straightforward; however, there is some subtlety relating to zero chains (see Example~\ref{ex.weak}). Formally, two chains of~$G$ are adjacent relative to~$D$ if the corresponding edges of~$D$ are adjacent.

\begin{dfn}[Balanced graph/chains]
	A weak subdivision~$G$ of a distillation~$D$ is \emph{balanced} with respect to~$D$ if the chain lengths of~$G$ differ by at most one  (including zero chains). 
	If all chains have the same length, then $G$ is \emph{perfectly balanced} relative to~$D$. A~\emph{balanced set of chains} is analogously defined.
\end{dfn}

Note that if $G$ has a chain of length at least~2, then it can only be balanced with respect to its proper distillation~$D_G$ (since $G$ has a zero chain with respect to any other weak distillation). On the other hand, if $G$ has no chain of length 2~or more, then $D_G = G$ and~$G$ is balanced with respect to \emph{all} of its weak distillations. 

\begin{exmp}
	\label{ex.weak}
	In Figure~\ref{f.distillations} above, $G$~has, at its lowermost vertex, one zero chain relative to~$D_2$. Furthermore, $G$ has two zero chains relative to $D_3$ and~$D_4$, and four relative to~$D_1'$. The two chains of $G$ with lengths 1~and~2 are adjacent relative to $D_1$~and~$D_2$ but nonadjacent with respect to $D_3$ and~$D_4$ (assuming that the intended chain–edge correspondence is clear). Furthermore, $G$ is imbalanced with respect to all of its weak distillations, but inserting a~vertex at the curved edge of~$G$ would make the graph balanced with respect to its proper distillation~$D_1$.
\end{exmp}

\subsection{Bond counting}
The reader is reminded that a~bond is a minimal cut. There is a natural correspondence between the bonds of a distillation and those of its weak subdivisions, which through Corollary~\ref{c.bonds} will be useful for counting bonds. We will need the following notion of a \emph{trivial} 2‑bond. (Trivial 3- and 4‑bonds will become important later on. One may think of a trivial 4‑bond as isolating a~chain from the rest of the graph.)

\begin{dfn}[Trivial bond, assuming a leafless graph]\ 
	\begin{enumenun}
		\item A 2‑bond is trivial if its two edges belong to the same chain.
		\item A 3‑bond is trivial if its edges belong to three chains emanating from a~3‑vertex.
		\item A 4‑bond is trivial if the corresponding bond in one (or equivalently, in all) of its cubic weak distillations isolates one edge.
	\end{enumenun}
\end{dfn}

\begin{rmk}
	The following alternative definition implies the one above: \emph{A bond is trivial if at least one of its sides is a~tree.}
\end{rmk}

\begin{exmp}
	In Figure~\ref{f.distillations}, all 2‑bonds of~$G$ are trivial, while the only 2‑bond of~$D_3$ is nontrivial. Referring to Figure~\ref{f.Dgraphs}, the graphs $K_4$ and $K_{3,3}$ have only trivial 3‑bonds, while the triangular prism~$\Pi_3$ contains one nontrivial 3‑bond.
\end{exmp}

\begin{lem}
	\label{l.bonds}
	Let \(D\) be a weak distillation of a graph~\(G\), whose chain lengths (which may be zero) are denoted by~\(\ell_i\), and let \(s\geq 1\). Then each \(s\)‑bond of~\(D\) naturally corresponds to a set of \(s\)‑bonds of~\(G\) with size \(\Pi_{i=1}^s \ell_i\), and distinct \(s\)‑bonds of~\(D\) correspond to disjoint sets. For \(s\neq 2\), these sets cover the set of \(s\)‑bonds of~\(G\), and for \(s=2\) they cover the set of nontrivial \(s\)‑bonds of~\(G\).
\end{lem}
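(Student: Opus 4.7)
My plan is to exploit the bond–bipartition correspondence: an edge set $F$ is a bond of a graph $H$ if and only if $F$ is the set of edges crossing some bipartition $(S_1,S_2)$ of $V(H)$ with both $H[S_1]$ and $H[S_2]$ connected. The lemma then becomes a statement about lifting and projecting bond-bipartitions between $D$ and $G$, with independent choices made along each chain.

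For the forward construction, I take an $s$-bond $\{e_1,\dots,e_s\}$ of $D$ with bipartition $(A,B)$ and lift $(A,B)$ to a bipartition of $V(G)$. Non-suppressible vertices of $G$ are placed according to their preimages in $D$; this is well-defined because two $D$-vertices merged by a contracted edge must lie on the same side of $(A,B)$, \emph{unless} the contracted edge is itself a bond edge with $\ell_i=0$, in which case the lifting is impossible and the $D$-bond correctly contributes no $G$-bonds (matching $\prod_i\ell_i=0$). For each non-bond edge of $D$, every internal vertex of its chain is placed on the common side of its endpoints. For each bond edge $e_i$ with $\ell_i\ge 1$, I choose an index $j_i\in\{1,\dots,\ell_i\}$ and place the first $j_i-1$ internal vertices on the $A$-side and the remaining $\ell_i-j_i$ on the $B$-side. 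The crossing edges are then exactly the $j_i$-th chain-edges, one per bond chain. Connectedness of each $G$-side reduces to connectedness of the corresponding $D$-side (which holds since $\{e_i\}$ is a bond of $D$), together with the fact that every chain-segment attaches to a $D$-vertex on its assigned side. The $\prod_i\ell_i$ choices of $(j_1,\dots,j_s)$ thus produce that many distinct $s$-bonds of $G$, and disjointness across distinct $D$-bonds is automatic, since the set of chains hit by a $G$-bond determines the $D$-bond it came from.

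For the covering direction, I take an $s$-bond $F$ of $G$ with bipartition $(S_1,S_2)$ and analyze how $F$ meets each chain. Every internal chain vertex $v$ has degree $2$ in $G$ with only chain-neighbors, so if $v\in S_1$ while both chain-neighbors of $v$ lie in $S_2$, then $v$ is isolated in $G[S_1]$. Connectedness of $G[S_1]$ then forces $S_1$ to consist only of an isolated contiguous ``middle run'' inside a single chain, in which case $F$ is exactly the two edges bordering that run---a \emph{trivial} $2$-bond. This is the unique exception; for $s\ge 3$ it is already ruled out by minimality of $F$, since the two bordering edges alone form a smaller cut. Outside this exception, each chain contains at most one edge of $F$, so $F$ contains exactly one edge from each of $s$ distinct chains. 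Projecting $(S_1,S_2)$ down to $V(D)$ gives a well-defined bipartition $(A,B)$; its $D$-cut is precisely the set of the corresponding $s$ $D$-edges, and both $D[A]$ and $D[B]$ are connected, because any path in $G[S_1]$ between non-suppressible vertices can be taken to traverse only whole non-cut chains (an excursion into an endpoint-run of a cut chain cannot reach the other side and must backtrack). Hence $F$ arises from a genuine $s$-bond of $D$.

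The principal obstacle is the structural middle-run analysis: converting the hypothesis ``$G[S_1]$ is connected'' into the precise constraint that forbids isolated interior chain pieces, and recognizing trivial $2$-bonds as the sole manifestation of that phenomenon. Once this structure is secured, both the lifting and the surjectivity of the correspondence reduce to bookkeeping with the bond–bipartition characterization.
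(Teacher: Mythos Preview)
Your proposal is correct and follows essentially the same approach as the paper: both set up the natural correspondence by observing that a $G$-bond is either a trivial $2$-bond or selects exactly one edge from each of $s$ distinct chains, which then project to an $s$-bond of $D$. The paper's proof merely asserts this dichotomy and declares the rest ``immediate,'' whereas you have carefully supplied the details via the bond--bipartition characterization (including the zero-chain case, the middle-run analysis, and connectedness of the projected sides), so your write-up is a faithful and more explicit execution of the paper's sketch.
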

\begin{proof}
	Let $G$ be a graph with a weak distillation~$D$ and let $S$ be an $s$‑bond of~$G$. Then, either $S$ consists of two edges from the same chain, in which case $S$ is a  trivial 2‑bond, or $S$ consists of edges from $s$~different chains of~$G$, and these chains correspond to an $s$‑bond in~$D$. Thus, we have a natural mapping from the set of bonds of~$G$ minus the trivial 2‑bonds, to the bonds of~$D$. With this, the lemma is immediate. (The mapping is typically highly non-injective and is surjective if and only if $D$ is the proper distillation of~$G$.)
\end{proof}

\begin{cor}
	\label{c.bonds}
	Assign weights to a distillation~\(D\) which yield a weak subdivision~\(G\) of~\(D\). For \(s \neq 2\), the number of \(s\)‑bonds of~\(G\) equals the sum of the products of the weights of the \(s\)‑bonds of~\(D\).
\end{cor}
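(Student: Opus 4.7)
The plan is to derive the corollary as an immediate bookkeeping consequence of Lemma~\ref{l.bonds}. Since the weights assigned to the edges of~$D$ that produce $G$ are by definition the chain lengths~$\ell_i$, for any $s$-bond $B = \{e_1,\ldots,e_s\}$ of~$D$ the product of its weights equals $\prod_{i=1}^s \ell_i$. Lemma~\ref{l.bonds} states that this number is exactly the cardinality of the set of $s$-bonds of~$G$ that naturally correspond to~$B$.

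Next, I would sum this identity over all $s$-bonds of~$D$. The lemma guarantees that the correspondence partitions a certain subset of the $s$-bonds of~$G$: distinct $s$-bonds of~$D$ give rise to disjoint families of $s$-bonds of~$G$, and for $s \neq 2$ the union of these families is the complete set of $s$-bonds of~$G$. Hence the double-count equals the total number of $s$-bonds of~$G$, which is what the corollary asserts. The hypothesis $s \neq 2$ enters solely to exclude the trivial $2$-bonds (two edges within a single chain), which are not in the image of the correspondence; for any other~$s$ no such exception arises, since an $s$-bond of~$G$ with $s\neq 2$ necessarily uses edges from $s$ distinct chains.

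There is essentially no obstacle: the proof is a one-line invocation of Lemma~\ref{l.bonds}, with the only thing to verify being that ``weight'' and ``chain length'' refer to the same quantity, which is built into the statement by the phrase ``assign weights \ldots\ which yield a weak subdivision~$G$''.
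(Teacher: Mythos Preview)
Your proposal is correct and is exactly the intended derivation: the paper states the corollary without an explicit proof, treating it as immediate from Lemma~\ref{l.bonds}, and your bookkeeping argument (sum the disjoint, covering families over the $s$-bonds of~$D$) is precisely that immediate step.
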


\section{The uniquely optimal \texorpdfstring{$(\MakeLowercase{k} = 1)$}{(k=1)}-graphs}
\label{s.k=1}

Subdivisions of the 3‑dipole (Figure~\ref{f.3pc}) have been called $\theta$‑graphs, since their shape resembles theta. However, we define $\theta$‑graphs to be \emph{weak} subdivisions of the 3‑dipole. This allows for the degenerate case were there is one zero chain. Such $\theta$‑graphs consist of two cycles connected by a cutvertex.

Balanced $\theta$‑graphs solve our \hyperlink{prb}{Problem} when $k=1$. The fact that such graphs are UMRGs was pointed out in~\cite{Li} based on~\cite{Bauer}. An independent development appeared in~\cite{WangWu} (cited in~\cite{Tseng}). We feel that it would be a natural development of ideas to provide a short and direct proof that these graphs---the first five of which are shown in Figure~\ref{f.firstk1}---are uniquely optimal.

\begin{figure}[htb!]
	\centering
	\includegraphics{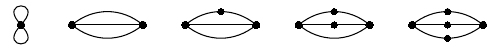}
	%
	%
	%
	%
	%
	%
	\caption{The first five uniquely optimal $(n,n+1)$-graphs. The pattern continues and cycles every three graphs.}
	\label{f.firstk1}
\end{figure}

\begin{prop}
	\label{p.k1}
	For given \(n\geq 1\) and \(m = n + 1\) there exists a uniquely optimal \((n,m)\)-graph, namely the balanced $\theta$‑graph of size~$m$.
\end{prop}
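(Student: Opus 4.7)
My plan is to reduce the problem to $\theta$-graphs and then to an elementary convexity argument. First, by Proposition~\ref{p.bridge}, any $p$-optimal $(n,n+1)$-graph is bridgeless. A connected bridgeless graph is leafless, so Proposition~\ref{p.cubic}\ref{pp.cubic2} produces a bridgeless cubic weak distillation. Such a distillation has exceedance $k=1$ and therefore $2k=2$ vertices and $3k=3$ edges; since a cubic graph on two vertices cannot contain a loop, it must be the 3-dipole. Thus every bridgeless $(n,n+1)$-graph is a $\theta$-graph, parameterized by chain lengths $\ell_1,\ell_2,\ell_3\geq 0$ summing to~$m$, with at most one length equal to zero (the zero-cycle prohibition).

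Next I would reduce the reliability to a single graph-dependent coefficient. Since $c_i=0$ for $i>k+1=2$, while $c_0=1$ and $c_1=m$ hold for every bridgeless $(n,m)$-graph, we have
\[R(G,p)=p^m+m\,p^{m-1}(1-p)+c_2(G)\,p^{m-2}(1-p)^2,\]
and $c_2(G)=\binom{m}{2}-d_2(G)$, so maximizing $R(G,p)$ over $\G_{n,m}$ amounts to minimizing $d_2(G)$. In a bridgeless graph every 2-disconnection is itself a 2-bond, and since the 3-dipole is 3-edge-connected, Lemma~\ref{l.bonds} forces every 2-bond of~$G$ to be trivial. Therefore
\[d_2(G)=\binom{\ell_1}{2}+\binom{\ell_2}{2}+\binom{\ell_3}{2}.\]

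Finally, a rebalancing argument finishes the job: whenever $\ell_i\geq\ell_j+2$, the substitution $(\ell_i,\ell_j)\mapsto(\ell_i-1,\ell_j+1)$ preserves admissibility (neither new value is zero) and strictly decreases the sum by $\ell_i-\ell_j-1\geq 1$. Hence the unique minimizer is attained when the three lengths differ by at most one, which specifies $G$ up to isomorphism as the balanced $\theta$-graph of size~$m$. The only substantive step in the whole argument is the reduction to $\theta$-graphs via Proposition~\ref{p.cubic}\ref{pp.cubic2}; the rest is routine, though one should verify that the zero-cycle constraint poses no issue for the balanced configuration (it is only relevant when $m=2$, in which case the balanced triple $(1,1,0)$ is itself admissible).
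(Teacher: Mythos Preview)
Your proposal is correct and follows essentially the same approach as the paper: reduce to bridgeless graphs via Proposition~\ref{p.bridge}, use Proposition~\ref{p.cubic}\ref{pp.cubic2} to identify the 3-dipole as the unique cubic weak distillation, and then optimize the single nontrivial coefficient over~$\theta$-graphs. The only cosmetic difference is that the paper maximizes $c_2=\ell_1\ell_2+\ell_1\ell_3+\ell_2\ell_3$ directly, whereas you minimize the complementary quantity $d_2=\sum_i\binom{\ell_i}{2}$ and supply an explicit rebalancing step; these are equivalent since $c_2+d_2=\binom{m}{2}$.
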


\begin{proof}
	By Proposition~\ref{p.bridge}, any $p$‑optimal $(n,m)$-graph~$G$ is bridgeless. By Proposition~\ref{p.cubic}\ref{pp.cubic2}, $G$~has a bridgeless, cubic weak distillation~$D$. Since $D$ has exceedence~1 and is cubic, $D$ has 2~vertices and 3~edges. The only possibility is the 3‑dipole, shown in Figure~\ref{f.dipole}. 
	
	Hence, $G$ is a $\theta$‑graph.
	Clearly, a percolation outcome of~$G$ is connected if and only if either at most one edge is deleted or exactly two edges from two different chains are deleted. Letting the chain lengths of~$G$ be $\ell_1$,~$\ell_2$ and~$\ell_3$, equation~\eqref{ccoeff} becomes
	\begin{equation}
		\label{k1}
		R(G,p)=p^{m}+mp^{m-1}(1-p)+\left(\ell_1 \ell_2 + \ell_1 \ell_3 + \ell_2 \ell_3 \right)p^{m-2}(1-p)^2\,.
	\end{equation}
	With $\ell_1 + \ell_2 + \ell_3 = m$, it is easy to show that the coefficient of the final term, and therefore $R(\ph,p)$, is maximized if and only if the chains are balanced, which specifies an $m$‑sized $\theta$‑graph up to isomorphism. We conclude that this is the unique $p$‑optimal $(n,m)$-graph, which is uniquely optimal since $p$~is arbitrary.
\end{proof}
\begin{figure}[h!]
	\vspace{-3mm}
	\centering
	\includegraphics{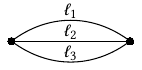}
	\caption{The 3‑dipole, or an~arbitrary $\theta$‑graph with labels representing chain lengths, at most one of which can be zero.}
	\label{f.dipole}
\end{figure}

\section{Main distillation result}
\label{s.maindist}
The main result regarding distillations is Theorem~\ref{t.3ec} in Section~\ref{ss.represent}, which essentially says that any bridgeless graph can be ``represented'' by at least one 3‑edge-connected cubic distillation. To properly formulate this result, we first need to introduce an equivalence relation on $(n,m)$-graphs (which may be considered interesting in its own right).

\subsection{Equivalently reliable graphs}
\label{ss.equiv}
We now introduce a reversible surgery which we call \emph{edge shifting} and show that it does not change the reliability function. Two edge shifting examples are shown in Figure~\ref{f.shiftex}. (It was pointed out to us by the authors of~\cite{Tutte} that an edge shift is a special case of what is known as a \emph{Whitney twist}, which does not affect the Tutte polynomial, and hence nor the reliability. However, we prefer to keep things as elementary as possible.)

\begin{dfn}[Edge shifting]
	\label{d.shifting}
	If the following surgery can be performed, it \emph{shifts} the edge~$e$ to the vertex~$v$.
	First expand an edge~$e'$ at the vertex~$v$ in such a~way that $e$ and $e'$ form a~2‑bond, and then contract~$e$.
\end{dfn}

\begin{figure}[tb!]
	\centering
	\includegraphics{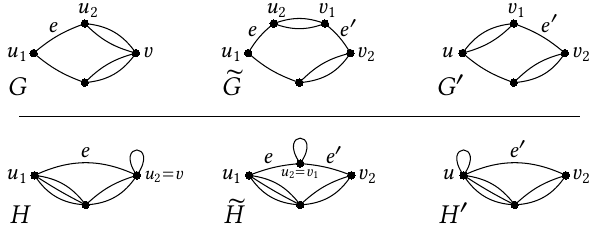}
	\caption{The two nontrivial ways to shift an edge, exemplified. In~$G$, the edge~$e$ is shifted to~$v$ by first expanding~$e'$ and then contracting~$e$, yielding~$G'$. In~$H$, the edge~$e$ is shifted to~$u_2$ so that the loop is effectively moved left. $G$~and~$G'$ ($H$~and~$H'$) are called equivalent.}
	\label{f.shiftex}
\end{figure}

\begin{rmk}
	Whether $e$ and $e'$ form a~2‑bond in the first step depends upon how $e'$ is expanded. Since there still may be a choice involved, the resulting graph is not uniquely defined.
\end{rmk}

Let $e=u_1u_2$ be an edge of some graph which can be shifted to the vertex $v$. If the edges $e$ and~$e'$ above belong to the same chain, then the resulting graph is isomorphic to the original. If the graph is 3‑vertex-connected, such trivial edge shifts will be the only ones possible. More interesting edge shifts can be performed in one of the following two circumstances.
\begin{enumerate}
	\item The vertex~$v$ is not part of nor incident with the chain containing~$e$, and every $u_1u_2$-path in $G-e$ goes through~$v$. See $G$ and the resulting~$G'$ in Figure~\ref{f.shiftex}.
	\item The vertex~$v$ is an endvertex of the chain containing $e$, and $v$ is a cutvertex. See $H$ and the resulting~$H'$ in Figure~\ref{f.shiftex}.
\end{enumerate}

By observing that any bridges remain as bridges during edge shifts, we note that every equivalence class defined below consists either of bridgeless graphs or of bridged graphs.

\begin{dfn}[Equivalent graphs]
	\label{d.equiv}
	Two graphs are equivalent if one can be obtained from the other by repeated edge shifting.
\end{dfn}

\begin{prop}
	\label{p.equiv}
	If \(G\)~and~\(H\) are equivalent graphs, then
	\begin{equation}
		\label{equivalent}
		d_i(G)= d_i(H) \mathrlap{ \qquad \forall i \in [1 \twodots m] \,,}
	\end{equation}
	so that \(G\)~and~\(H\) have the same (un)reliability function.
\end{prop}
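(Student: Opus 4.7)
The plan is to show that a single edge shift preserves $d_i$ for every $i$; the proposition then follows by iterating along the sequence of shifts witnessing the equivalence of $G$ and $H$. So suppose $G'$ is obtained from $G$ by one shift of an edge $e = u_1u_2$ to a vertex $v$. By Definition~\ref{d.shifting} there is an intermediate graph $G^*$, obtained from $G$ by expanding an edge $e'$ at $v$ (producing two new vertices $v_1, v_2$) in such a way that $\{e, e'\}$ is a 2‑bond of $G^*$, and then $G' = G^*/e$. Since also $G = G^*/e'$, both $G$ and $G'$ arise from $G^*$ by contracting one edge of this 2‑bond. I will show that $c_i(G) = c_i(G')$ for all $i$; by~\eqref{ccoeff} this yields equal reliability functions, and hence, via~\eqref{dcoeff}, the equality of the $d_i$.

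Let $A$ and $B$ denote the two sides of the 2‑bond, viewed as edge‑induced subgraphs of $G^*$, with $u_1, v_1 \in A$ and $u_2, v_2 \in B$. Then $G$ is obtained from the disjoint union $A \sqcup B$ by identifying $v_1$ with $v_2$ (forming the vertex $v$) and adjoining the edge $e = u_1u_2$, while $G'$ is obtained from $A \sqcup B$ by identifying $u_1$ with $u_2$ (forming $u$) and adjoining the edge $e' = v_1v_2$. In particular $E(G) = E(A) \sqcup E(B) \sqcup \{e\}$ and $E(G') = E(A) \sqcup E(B) \sqcup \{e'\}$, and there is a size‑preserving bijection $\phi \colon E(G) \to E(G')$ that fixes $E(A) \cup E(B)$ pointwise and sends $e \mapsto e'$.

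The key step is to verify that for every $S \subseteq E(G)$, the spanning subgraphs $(V(G), S)$ and $(V(G'), \phi(S))$ have the same number of connected components. Writing $S = S_A \sqcup S_B \sqcup S_e$ with $S_A \subseteq E(A)$, $S_B \subseteq E(B)$, $S_e \subseteq \{e\}$, both subgraphs arise from $(V(A), S_A) \sqcup (V(B), S_B)$ by performing two port‑pair mergers: the pair $\{v_1, v_2\}$ is always merged, while the pair $\{u_1, u_2\}$ is merged precisely when $e \in S$ (equivalently, $e' \in \phi(S)$). In $G$ the first merger is realized by the vertex identification and the second by the edge $e$; in $G'$ the roles are reversed. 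Crucially, the two merger mechanisms---vertex identification and edge adjunction---have the same effect on a component count, namely a decrease by one if the two ports previously lay in distinct components and no change otherwise. Hence the two spanning subgraphs have the same number of components, so one is connected iff the other is, and we conclude $c_i(G) = c_i(G')$ for every $i$.

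The main obstacle is setting up this component‑merger picture cleanly enough that the argument becomes transparent. Once both graphs are viewed as the same controlled gluing of $A$ and $B$, the claim reduces to the elementary observation that identifying two vertices and adjoining a single edge between them both reduce the component count by the same amount. Some care is required in handling degeneracies (when $v$ or $u$ is a cutvertex, when $A$ or $B$ has only one vertex, or when any two of the ports coincide), but the component‑count analysis is insensitive to these cases and the argument goes through uniformly.
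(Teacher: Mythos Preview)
Your proof is correct and takes a somewhat different route from the paper. Both arguments use the intermediate graph $G^*$ and its 2-bond $\{e,e'\}$, but the paper proceeds by a path-chasing case analysis (splitting on whether $e$ lies in the spanning subgraph and, if not, tracking a $u_1u_2$-path through $v$), whereas you decompose $G^*$ along the bond into halves $A$ and $B$ and compare component counts directly under the natural bijection $\phi$. This is exactly the Whitney-twist picture the paper alludes to but does not exploit; your version yields the marginally stronger statement that component counts (not just connectedness) coincide, and it avoids the paper's asymmetric ``only if, then by symmetry'' structure.

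One sentence deserves sharpening. Your description of the ``two port-pair mergers'' literally fits only $(V(G),S)$; for $(V(G'),\phi(S))$ it is $\{u_1,u_2\}$ that is always identified and $\{v_1,v_2\}$ that is merged conditionally. When $e\in S$ both pairs get merged on both sides and, as you note, the two mechanisms are interchangeable for component counts. When $e\notin S$, however, the two graphs perform \emph{different} single mergers; what makes the counts agree is that each such merger joins a vertex of $A$ to one of $B$ and hence reduces the component count of the disjoint union by exactly one, irrespective of which pair is chosen. Stating this explicitly would close the only soft spot in the argument.
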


\begin{proof}
	Let $G$ and $G'$ be two equivalent graphs, where $G'$ is obtained from~$G$ by shifting the edge $e=u_1u_2$ to $v\in G$ (which may be the same vertex as $u_1$~or~$u_2$) according to~Definition~\ref{d.shifting}. Let $\widetilde{G}$~be the intermediate graph obtained after Step~1, containing both $e$ and $e'=v_1v_2$.
	
	We claim that an arbitrary spanning subgraph~$G_i$ of~$G$, in which $i\in[1\twodots m]$ of the edges are removed, is connected if and only if the corresponding subgraph~$G_i'$ of~$G'$ is connected. This claim implies that~\eqref{equivalent} holds for $G$ and $G'$, and by induction for $G$ and any equivalent graph~$H$. It suffices to prove the \emph{only~if} direction of the claim; the \emph{if} direction then follows because of the symmetric relationship between $G$ and~$G'$.
	
	To this end, assume $G_i$~to be connected. If $e\in G_i$, then $G'_i$~is connected, since $G'_i$~is obtained from~$G_i$ by contracting~$e$ and expanding~$e'$, which preserves connectedness. 
	Suppose on the contrary that $e\notin G_i$. There is a $u_1u_2$‑path $\Pi$ in~$G_i$, since $G_i$~is connected. In $\widetilde{G}-e$, the bridge $e' = v_1v_2$ separates $u_1$ and~$u_2$, and from this we deduce that $v\in \Pi$ in~$G_i$. Now, the image in~$G'_i$ of the set of edges in~$\Pi$ is a $v_1v_2$‑path in~$G'_i$. Hence the connectivity of~$G'_i$ is the same as that of $G'_i \cup e'$. Since $G_i$ is connected, $G_i \cup e$ is connected, which implies, by the first case, that $G'_i \cup e'$ is connected, which implies that $G'_i$~is connected.
\end{proof}

\subsection{Distillations to represent bridgeless graphs}
\label{ss.represent}
Theorem~\ref{t.3ec} below---in which part~\ref{tt.3ec2} is the most important---is a continuation of Proposition~\ref{p.cubic}. The theorem will allow us to focus on 3‑edge-connected cubic distillations, for which we now introduce a special notation. (Note that all 3‑edge-connected cubic graphs are simple, except for the 3‑dipole.)

\begin{dfn}[$\D_k$]
	\label{d.Dk}
	For $k\geq1$, let $\smash{\D_k}$ denote the set of 3‑edge-connected cubic graphs (which necessarily are distillations) of exceedance~$k$.
\end{dfn}

Figure~\ref{f.Dgraphs} shows the graphs in $\D_1$ through~$\D_4$, in other words the 3‑edge-connected cubic graphs on up to 8 vertices. Theorem~\ref{t.3ec} says that, up to graph equivalence, every bridgeless graph of exceedance 1,~2,~3 or~4 is a weak subdivision of at least one of these respective graphs. When $k > 4$, the $\D_k$‑sets start to become impractically large.  
There are fourteen 3‑edge-connected cubic graphs constituting~$\D_5$. See~\cite[pp.~56–57]{cubic}.  
Of these, the most promising distillation is known as Petersen's graph, as is discussed further in Section~\ref{ss.k=5}.

\begin{figure}[b!]
	\centering
	\includegraphics{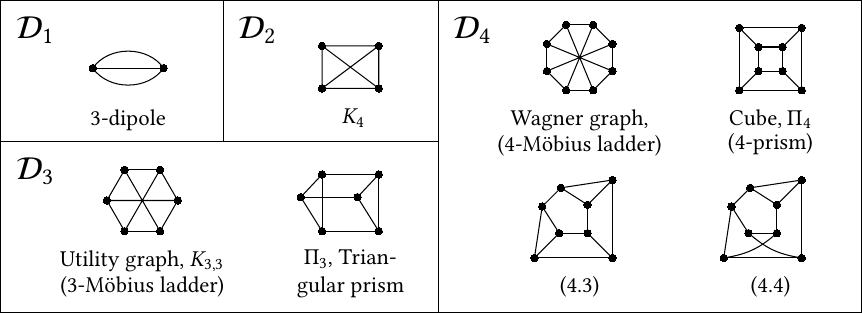}
	\caption{All 3‑edge-connected cubic graphs of exceedances 1,~2,~3 and~4.}
	\label{f.Dgraphs}
\end{figure}

\begin{thm}
	\label{t.3ec}
	Let \(G\) be a bridgeless graph in \(\G_{n,n+k}\), where \(n\geq 1\) and \(k\geq 1\).
	\begin{enumenun}
		\item\label{tt.3ec1} \(G\) has a weak distillation in~\(\D_k\) if and only if all 2‑bonds of~\(G\) are trivial.
		\item\label{tt.3ec2} \(G\) is equivalent to some graph \(G^\dag\) which has a weak distillation \(D\in\D_k\).
		\item\label{tt.3ec3} If \(G\) does not have a weak distillation in~\(\D_k\), then any \(G^\dag\) as in~\ref*{tt.3ec2} is imbalanced and has at least one zero chain with respect to its weak distillation(s) in~\(\D_k\).
	\end{enumenun}
\end{thm}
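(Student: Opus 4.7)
My plan is to establish the three parts sequentially, with Lemma~\ref{l.bonds} providing the bridge between bond structure and distillation structure.

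For Part~\ref{tt.3ec1}, the ``only if'' direction is immediate: any weak distillation $D\in\D_k$ is 3-edge-connected and has no 2-bonds, so by Lemma~\ref{l.bonds} every nontrivial 2-bond of $G$ would have to correspond to a 2-bond of $D$. For the ``if'' direction, assume every 2-bond of $G$ is trivial. Then by the same lemma the proper distillation $D_G$ is 3-edge-connected, since any 2-bond of $D_G$ would give $\ell_1\ell_2\geq 1$ nontrivial 2-bonds of $G$ (chain lengths in the proper distillation are all at least~$1$). It remains to expand $D_G$ to a cubic graph in $\D_k$ by iteratively splitting each vertex of degree $\geq 4$ into two vertices of degree $\geq 3$. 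The technical heart is a splitting lemma for 3-edge-connected graphs: at every vertex $v$ of degree $\geq 4$, some split via a new edge $e_v$ preserves 3-edge-connectivity. I would prove this in the spirit of the proof of Proposition~\ref{p.cubic}\ref{pp.cubic2}, by showing that if every split were bad---with $e_v$ paired with some $f$ into a 2-bond---then a partition/counting argument at the incidences of $v$ would yield a 2-cut in $D_G$ itself, contradicting 3-edge-connectivity.

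For Part~\ref{tt.3ec2}, I induct on the number of nontrivial 2-bonds of $G$. The base case is handled by~\ref{tt.3ec1} with $G^\dag=G$. For the inductive step, given a nontrivial 2-bond $\{f,g\}$ with sides $A,B$, I perform an edge shift that destroys it: let $b$ be the endpoint of $g$ in $B$, and shift $f$ to $b$ by expanding a suitable edge $e'$ at $b$ so that $\{f,e'\}$ becomes a 2-bond, then contracting $f$. By Proposition~\ref{p.equiv} the resulting graph is equivalent to $G$, and the surgery effectively joins $A$ and $B$ through a cutvertex, eliminating $\{f,g\}$ as a 2-bond. The main verification is that the shift can be arranged so as not to create new nontrivial 2-bonds, which should follow from careful local analysis of the split; applying the inductive hypothesis then yields $G^\dag$.

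For Part~\ref{tt.3ec3}, I argue by contradiction and split the negation of the conclusion into two cases. Case~I: all chains of $G^\dag$ have length at most $1$. Then by Lemma~\ref{l.bonds} both the trivial and nontrivial 2-bond counts of $G^\dag$ vanish, so $d_2(G^\dag)=0$; by Proposition~\ref{p.equiv}, also $d_2(G)=0$, contradicting the existence of a nontrivial 2-bond of $G$ (which we have by~\ref{tt.3ec1} and the hypothesis). Case~II: $G^\dag$ has no zero chain, so $D=D_{G^\dag}\in\D_k$. Here I would invoke the remark preceding Definition~\ref{d.shifting} that edge shifts are Whitney twists and therefore preserve the cycle matroid: the matroid of $G^\dag$ is a series extension of the matroid of $D\in\D_k$ through the chain lengths, and the same must hold for $G$, which I would argue forces $G$ to admit a weak distillation in $\D_k$, contradicting the hypothesis.

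The main obstacle I anticipate is the last step of Case~II: translating matroid-level preservation into a graph-level weak distillation in $\D_k$ for $G$, since a single edge shift can reshape the proper distillation substantially. An alternative route would be a direct structural induction showing that any edge shift applied to a graph admitting a weak distillation in $\D_k$ yields a graph that still admits one (possibly with different chain lengths), so that the property propagates backward along the entire shift sequence from $G^\dag$ to $G$.
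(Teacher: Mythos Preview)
Your argument for Case~I of part~\ref{tt.3ec3} (chains of $G^\dag$ all of length $\leq 1$) via $d_2$-invariance is a genuine and elegant alternative to the paper's approach: the paper instead shows inductively that every edge shift applied to such a $G^\dag$ yields another $3$-edge-connected graph with chains of length $\leq 1$, so the whole equivalence class has weak distillations in $\D_k$. Your route is shorter and avoids that induction entirely.

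However, there are two real gaps elsewhere. In part~\ref{tt.3ec2}, inducting on the number of nontrivial $2$-bonds via a \emph{single} edge shift is fragile: a lone shift of $f$ need not decrease that count, and your parenthetical ``the shift can be arranged so as not to create new nontrivial $2$-bonds'' is exactly the hard part you have not done. The paper sidesteps this by shifting \emph{all} edges of one chain $b$ into the other chain $a$ of the nontrivial $2$-bond; the effect is to merge the two chains, so the number of \emph{positive chains} drops by one. That quantity is what the paper inducts on, and it is guaranteed to strictly decrease.

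In Case~II of part~\ref{tt.3ec3} (no zero chain, so $D=D_{G^\dag}\in\D_k$), the matroid route is unnecessary and, as you suspect, hard to land. The paper's argument is direct: since $D$ is $3$-edge-connected and cubic, it is $3$-vertex-connected, so the only edge shifts in $D$---and hence in $G^\dag$, whose nontrivial shifts would have to come from a nontrivial $2$-bond of $D_{G^\dag}$ or a cutvertex at a chain endpoint---are trivial. Thus $G^\dag$ is the unique graph in its equivalence class, $G=G^\dag$, and $G$ already has a weak distillation in $\D_k$. Your alternative ``propagate the property backward along the shift sequence'' is in fact what the paper does in its Case~2, but in Case~1 the equivalence class is a singleton and no propagation is needed. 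Finally, your sketch for the splitting lemma in part~\ref{tt.3ec1} is plausible but underspecified; the paper handles it constructively by a cutvertex/non-cutvertex case split at $v$, using that a bad split at a non-cutvertex $v$ forces $v$ to be a cutvertex of $D_G-f$, whence the cutvertex case applies to $D_G-f$.
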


\begin{rmk}
	The weak distillation in part~\ref*{tt.3ec1} is not unique. Consider the two graphs $\Pi_3$ and~$K_{3,3}$, which constitute~$\D_3$, with the edge labeling of Figure~\ref{f.Pi3} in Section~\ref{ss.Pi3}. Contracting~$c_1$ in the two distillations yields the same graph, which therefore has two weak distillations in~$\D_3$.
\end{rmk}

\begin{proof}[Proof of part~\ref*{tt.3ec1}]
	For the \emph{only~if} direction, suppose that $G$ has a weak distillation $D\in\D_k$. Recall that edge-connectivity is nondecreasing under edge contraction. Since $D$ is 3‑edge-connected, $D_G$ has no 2‑bonds, so $G$ can only have trivial 2‑bonds.
	
	For the \emph{if} direction, given a bridgeless~$G$ with only trivial 2‑bonds, its proper distillation~$D$ is 3‑edge-connected. If $D$ is already cubic, there is nothing to prove. Suppose otherwise, and let $u$ be a vertex of~$D$ with degree at least~4. We claim that it is possible to expand an edge at~$u$ while keeping 3‑edge-connectedness. There are two cases to consider.
	
	\emph{Case 1:} $u$ is a cutvertex. Let $e_a$~and~$e_b$ be edges incident to~$u$ such that $e_a$~belongs to a~block which we call~$A$, and $e_b$~belongs to a different block~$B$, as in Figure~\ref{f.cubex1}. Since $D$ has no bridge, there is a cycle~$C_a$, contained in~$A$, starting with~$e_a$, and another cycle~$C_b$, contained in~$B$, which ends with~$e_b$. Now expand an edge $e_u=u_1u_2$ at~$u$, letting $e_a$~and~$e_b$ be incident to~$u_1$ and the other edges to~$u_2$. Call the resulting graph~$D'$.
	
	\begin{figure}[t!]
		\centering
		\includegraphics{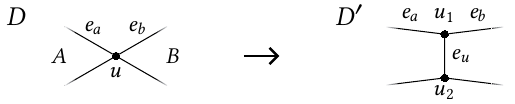}
		%
		%
		\caption{How to make a 3‑edge-connected distillation~$D$ cubic by edge expansion, case~1: If $u$ is a~cutvertex, make sure that the new edge~$e_u$ is a~chord of a~cycle.}
		\label{f.cubex1}
	\end{figure}
	
	We now show that $D'$ is 3‑edge-connected. A~bond in~$D'$ which does not involve~$e_u$ is obviously a bond in~$D$, so since $D$ is 3-edge-connected, any 1‑~or~2‑bond in~$D'$ would have to include~$e_u$. In~$D'$, $e_u$~is a chord of the cycle~$C_a \cup C_b$, so $e_u$ is not a~1‑bond and $D'$ is therefore bridgeless. Furthermore, deleting a~chord does not change the block structure, so $D'-e_u$ is also bridgeless. This implies that $D'$ is 3‑edge-connected.
	
	\emph{Case 2:} $u$ is not a cutvertex. Expanding an edge $e_u=u_1u_2$ at~$u$ so that both $u_1$~and~$u_2$ have degree at least~3 can be done in several ways. Since $u$ is not a cutvertex, this surgery cannot create a~bridge, but it might create a~2‑bond, as in Figure~\ref{f.cubex2}.
	\begin{figure}[t!]
		\centering
		\includegraphics{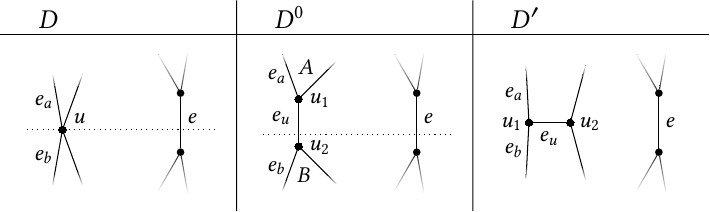}
		\caption{How to make a 3‑edge-connected distillation~$D$ cubic by edge expansion, case~2: If $u$ is not a cutvertex, a 2‑bond can possibly arise (as in $D^0$), but can also be avoided (as in~$D'$).}
		\label{f.cubex2}
	\end{figure}
	If it does not create a 2‑bond, we are done. On the other hand, suppose that expanding~$e_u$ creates the 2‑bond $\{e_u,e\}$ in the resulting graph~$D^0$.
	This implies that $u$ is a cutvertex in the graph~$D-e$. Noting that $D-e$ is bridgeless, we expand a new edge~$e_u$ in this graph, exactly as described in Case~1 (with $D-e$ instead of~$D$). We call the resulting graph $D'-e$, and then restore~$e$ to obtain~$D'$. The proof that $D'$ is 3‑edge-connected follows Case~1 verbatim.
	
	Finally, we note that in both Case~1 and Case~2, the new vertices $u_1$ and~$u_2$ have degrees between 3 and $(\deg(u)-1)$, thereby lowering the average degree of the distillation. Thus, we can repeat the above procedure until we obtain a 3‑edge-connected cubic weak distillation of~$G$.
\end{proof}

\begin{proof}[Proof of part~\ref*{tt.3ec2}]
	If~$G$ itself has no nontrivial 2‑bonds, then the statement is true by part~\ref*{tt.3ec1}. Suppose on the other hand that $G$~has at least one nontrivial 2‑bond, and let $a$ and~$b$ denote the positive chains containing the bond. Shift all the edges in chain~$b$ to chain~$a$, one by one, according to Definition~\ref{d.shifting} (one can always use an endvertex of~$a$ to expand an edge into the chain). We call the resulting graph~$G'$, which is equivalent to~$G$ by Definition~\ref{d.equiv}.
	
	We now repeat the procedure above for any remaining nontrivial 2‑bond of~$G'$. Since $G'$ has one less positive chain than~$G$, this process eventually terminates, at which point we have obtained a graph~$G^\dag$ which is equivalent to~$G$ and whose only 2‑bonds are trivial. $G^\dag$ is bridgeless since edge shifts preserve bridgelessness. Using part~\ref*{tt.3ec1}, this proves part~\ref*{tt.3ec2}.
\end{proof}

\begin{proof}[Proof of part~\ref*{tt.3ec3}]
	Given $G$ and an equivalent graph~$G^\dag$ with a weak distillation $D\in \D_k$, we show the contrapositive version of~\ref*{tt.3ec3} by demonstrating that if $G^\dag$ is balanced or has only positive chains with respect to~$D$, then $G$ has a weak distillation in~$\D_k$. There are two cases to consider.
	
	\emph{Case 1:} $G^\dag$ has only positive chains relative to~$D$. This implies that $D$ is the proper distillation of~$G^\dag$. Since $D$ is 3‑edge-connected and cubic, it is easy to see that only trivial edge shifts can be performed in~$D$ and hence also in~$G^\dag$ (which is to say that each edge can only be shifted within its chain; see the discussion after Definition~\ref{d.shifting}). Hence, $G^\dag$~is the only graph in its equivalence class and the conclusion is immediate.
	
	\emph{Case 2:} 
	$G^\dag$ is balanced and has at least one zero chain relative to~$D$. This implies that every chain of~$G^\dag$ has either length one or zero. Suppose that there is a nontrivial way to shift an edge~$e$ in~$G^\dag$, otherwise there is nothing to prove. We first expand an edge~$e'$ so that $\{e,e'\}$ is a nontrivial 2‑bond in the resulting graph~$\widetilde{\Gamma}$.
	
	We claim that $\{e,e'\}$ is the only 2‑bond of $\widetilde{\Gamma}$. To see this, first note that $G^\dag$ is 3‑edge-connected, since $D$~is, and hence any 2‑bond of~$\widetilde{\Gamma}$ necessarily involves~$e'$. Now, suppose that $e''$ is some third edge such that $\{e',e''\}$ is a 2‑bond in~$\widetilde{\Gamma}$. Then, the symmetric difference between $\{e,e'\}$ and~$\{e',e''\}$  is also a 2‑bond in~$\widetilde{\Gamma}$ (since it is a~cut and since $\widetilde{\Gamma}$ is bridgeless). This 2‑bond does not contain~$e'$, which is a contradiction.
	
	Let $\Gamma'$ be the graph obtained by contracting~$e$ in~$\widetilde{\Gamma}$. Since the only 2‑bond of~$\widetilde{\Gamma}$ is $\{e,e'\}$, it follows that $\Gamma'$ is 3‑edge-connected, and~so $\Gamma'$ has a weak distillation~$D'$ in~$\D_k$ by part~\ref{tt.3ec1}. Since $\widetilde{\Gamma}$ has no chains with more than one edge, the same holds for~$\Gamma'$, and hence $\Gamma'$ is balanced with respect to~$D'$. Furthermore, since $\Gamma'$ has a vertex of degree at least four from the contraction of~$e$, $\Gamma'$ has at least one zero chain relative to~$D'$. Since the pair $(\Gamma',D')$ remains in Case~2, it follows that every graph equivalent to~$G^\dag$, and $G$ in particular, has a weak distillation in~$\D_k$.
\end{proof}

\section{Minimizing 2- and 3-disconnections}
\label{s.minimize}

\subsection{Minimizing 2‑disconnections for \texorpdfstring{$k\geq 1$}{k≥1}}
Necessary and sufficient conditions for an $(n,m)$-graph to minimize~$d_2(\ph)$ have been known since~\cite{Bauer}, at least in the context of simple graphs. Our framework allows for a more unified formulation 
with a shorter proof. The second part of Theorem~\ref{t.d2} below describes a particular surgery which yields a lower $d_2$‑value, and will be used repeatedly in what follows.

\begin{thm}\label{t.d2}\ 
	\begin{enumenun}
			\item\label{tt.d2iff}
			For \(k\geq 1\), a graph \(G\in\G_{n,n+k}\) minimizes~\(d_2(\ph)\) if and only if \(G\) is a balanced weak subdivision of some \(D\in\D_k\) (``weak'' is not needed when \(n\geq 2k\)).
			\item\label{tt.d2decrease}
			If \(G\) is an imbalanced weak subdivision of \(D\in\D_k\), then a strictly \(d_2\)‑decreasing surgery is to choose a pair of imbalanced chains and move one edge from the longer to the shorter chain.
		\end{enumenun}
	\end{thm}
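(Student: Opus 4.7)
The plan is to express $d_2(G)$ for bridgeless~$G$ as a simple sum of binomial coefficients in the chain lengths, and then minimize by strict convexity. By Proposition~\ref{p.bridge} any $d_2$-minimizer is bridgeless, so I restrict attention to bridgeless~$G$ throughout. For such a~$G$, Theorem~\ref{t.3ec}\ref{tt.3ec2} yields an equivalent~$G^\dag$ admitting a weak distillation $D\in\D_k$, and by Proposition~\ref{p.equiv} this equivalence preserves~$d_2$.

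Since $D$ is $3$-edge-connected, it has no $2$-bonds; Lemma~\ref{l.bonds} then implies that $G^\dag$ has no nontrivial $2$-bonds, while its trivial $2$-bonds contribute $\binom{\ell_i}{2}$ per chain of length~$\ell_i$ (zero chains contributing nothing). Hence
\[
d_2(G)\;=\;d_2(G^\dag)\;=\;\sum_{i=1}^{3k}\binom{\ell_i}{2},\qquad\text{where }\sum_{i=1}^{3k}\ell_i=m.
\]
Strict convexity of $\binom{\ph}{2}$ shows that this sum is minimized, over nonnegative integers summing to~$m$, exactly when the $\ell_i$ are within one of each other; and the minimum depends only on $n$ and~$k$, not on which $D\in\D_k$ is chosen (all have $3k$ edges). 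This gives the \emph{if} direction of~\ref{tt.d2iff}. The parenthetical remark then follows at once: the balanced chain lengths are all positive iff $\lfloor m/(3k)\rfloor\ge 1$, i.e.\ iff $n\ge 2k$.

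The step I expect to be the main subtlety is the \emph{only if} direction, since balance of the equivalent~$G^\dag$ is immediate from the convexity above, but one needs $G$ itself to be a balanced weak subdivision of some $D\in\D_k$. Here Theorem~\ref{t.3ec}\ref{tt.3ec3} is the decisive tool: were $G$ to have no weak distillation in~$\D_k$, then every equivalent~$G^\dag$ as above would be imbalanced with respect to its weak distillation(s) in~$\D_k$, forcing $d_2(G)=d_2(G^\dag)$ to exceed the minimum---a contradiction. Hence $G$ itself has a weak distillation in~$\D_k$, and applying the displayed formula directly to~$G$ forces its chains to be balanced.

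Part~\ref{tt.d2decrease} is then a direct computation from the same formula. For $\ell_i\ge\ell_j+2$, the prescribed move produces a weak subdivision of the same~$D$ with the two lengths replaced by $\ell_i-1$ and $\ell_j+1$ (note $\ell_i-1\ge 1$, so no zero cycle is created), and the identity $\binom{n}{2}-\binom{n-1}{2}=n-1$ gives a strict $d_2$-decrease of $(\ell_i-1)-\ell_j=\ell_i-\ell_j-1>0$.
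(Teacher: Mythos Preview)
Your proof is correct and follows essentially the same route as the paper's: reduce to bridgeless graphs via Proposition~\ref{p.bridge}, pass through Theorem~\ref{t.3ec} to reach weak subdivisions of~$\D_k$, express $d_2$ as $\sum_i\binom{\ell_i}{2}$ using that all $2$-bonds are trivial, and conclude by convexity (the paper does the same via the explicit difference $\binom{a}{2}+\binom{b}{2}-\binom{a-1}{2}-\binom{b+1}{2}=a-b-1$). Your invocation of Theorem~\ref{t.3ec}\ref{tt.3ec3} for the \emph{only~if} direction matches the paper's Case~1, and your part~\ref{tt.d2decrease} computation is identical to the paper's Case~2.
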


\begin{proof}
	The proof of the \emph{only~if} direction of part~\ref*{tt.d2iff} will also prove~\ref*{tt.d2decrease}. We suppose that $G\in\G_{n,n+k}$ is not a balanced weak subdivision of any distillation in~$\D_k$ and show that $G$ does not minimize~$d_2(\ph)$. There are two cases: 
	
	\emph{Case~1:} $G$ is not a weak subdivision of any graph in~$\D_k$. By Theorem~\ref{t.3ec}\ref{tt.3ec1}, $G$~either has a~bridge or is bridgeless but has nontrivial 2‑bond. If $G$ has a~bridge, then by Proposition~\ref{p.bridge} there exists a $G'\in \G_{n,n+k}$ such that $d_2(G') < d_2(G)$. If $G$ is bridgeless but has a nontrivial 2‑bond, then by Theorem~\ref{t.3ec}\ref{tt.3ec2} there is an equivalent graph~$G^\dag$ with a weak distillation in~$\D_k$, and by part~\ref{tt.3ec3}, $G^\dag$~is imbalanced with respect to this weak distillation. Since $d_2(G^\dagger) = d_2(G)$ by Proposition~\ref{p.equiv}, we can conclude through Case~2 below that $G$~is not $d_2$‑minimizing.
	
	\emph{Case~2:} $G$~is an imbalanced weak subdivision of some $D \in \D_k$. Let $a$~and~$b$ be the lengths of two imbalanced chains of~$G$ relative to~$D$ such that $a > b + 1$, and let~$G'$ be the graph obtained by moving one edge from the $a$‑chain to the $b$‑chain. By Theorem~\ref{t.3ec}\ref{tt.3ec1}, $G$~and~$G'$ have only trivial 2‑bonds, so $d_2(\ph)$ is just the number of pairs of edges which belong to the same chain. Thus, we obtain
	\vspace{-2mm}
	\begin{equation}
		\label{d2}
		d_2(G) - d_2(G') = \binom{a}{2} + \binom{b}{2} - \binom{a-1}{2} - \binom{b+1}{2}\,,
	\end{equation}
	which simplifies to $a - (b+1)$ and is positive by our assumption on $a$~and~$b$. This proves the \emph{only~if} direction of~\ref*{tt.d2iff}, as well as~\ref*{tt.d2decrease}. 
	
	For the \emph{if}~direction of~\ref*{tt.d2iff}, using the \emph{only~if} direction, it suffices to show that $d_2(\ph)$ is constant over $m$‑sized balanced weak subdivisions of distillations in~$\D_k$. Let $G\in \G_{m-k,m}$ be a balanced weak subdivision of $D\in\D_k$. Recall that $D$ has $3k$~edges, so $G$~has $3k$~chains. Since these chains are balanced, we can interpret the Euclidean division $m = 3kq + r$, where $0 \leq r < 3k$, as saying that the ``base length'' of the $3k$~chains is~$q$, while $r$~of the chains are one edge longer (cf.~\eqref{euclid} below). Since $d_2(G)$, again by Theorem~\ref{t.3ec}\ref{tt.3ec1}, equals the number of ways to choose two edges within the same chain, we have
	\vspace{-1mm}
	\[ d_2(G) = (3k-r) \binom{q}{2} + r \binom{q+1}{2} \,. \qedhere \]
\end{proof}

\subsection{Results about 3‑disconnections for \texorpdfstring{$k\geq 2$}{k≥2}}
\label{ss.3dis}
A general statement about minimizing the number of 3‑disconnections can be found in~\cite{Wang97}; however, we find multiple reasons to consider this paper unreliable. In particular, the proof of \cite[Theorem~10(c)]{Wang97} lacks a coherently structured argument and misleadingly relies upon a particular picture. (One may note that the only other known paper by its corresponding author has an erroneous main conclusion with a multiply flawed proof, as shown in Section~\ref{ss.K33}.)

The main work of this section is to prove Proposition~\ref{p.d3min}, which continues into Theorem~\ref{t.d3min2}. These results can be compared to Theorem~10(c) and 10(a)(c) in~\cite{Wang97}, respectively. First, we need some new notation.

Let $\B_m(D)$ denote the set of $m$‑sized balanced weak subdivisions of~$D$. A~\emph{balanced weighting} of~$D$ is a weighting which yields a graph $G\in\B_m(D)$, for some~$m$, when the weights are interpreted as chain lengths. The weighting of~$D$ is uniquely determined by~$G$ (up to isomorphism) except in some degenerate cases involving several zero chains, as exemplified in Example~\ref{ex.weightings}.

Consider a cubic distillation~$D$ of exceedance~$k$ and a graph $G\in \B_m(D)$.  Recalling that $D$ has size~$3k$, the $m$~edges of~$G$ are distributed as evenly as possible over $3k$~chains. We wish to define a ``standard chain length'' or ``standard weight'' $q\geq 0$, such that at least half of the edges of~$D$ have weight~$q$. Using Euclidean division with a ``centered remainder'', we define $q$~and~$r$ according to 
\begin{equation}
	\label{euclid}
	m = 3kq + r \qquad\mathrlap{(-3k/2<r\leq 3k/2)\,,}
\end{equation}
so that $D$ has $3k-\abs{r}$ edges of weight~$q$ and $\abs{r}$~edges of either weight~\(q-1\) (if \(r\)~is negative) or weight~\(q+1\) (if $r$ is positive).

\begin{dfn}[\kern1pt $b_3^\rv$ and $\piv$]
	\label{d.isolation}
	Let $D\in \D_k$ where $k\geq2$ and consider a balanced weighting of~$D$. With $q$ as above, we define:
	\begin{enumenun}
		\item The product of the weights incident to a vertex $u\in D$ is denoted by~$b_3^\rv(u)$. 
		\item The $\piv$-value of a vertex $u_i\in D$, denoted $\piv(u_i)$, is the number of edges incident to~$u_i$ which do not have weight~$q$, counted with sign in that $q+1$ is counted positively and $q-1$ negatively. The $\piv$-values of~$D$ yield a multiset denoted by $\mset{\piv_i}_{i=1}^{2k}$. The $\piv$-values are \emph{balanced} if the maximum pairwise difference of its values is one.
	\end{enumenun}
\end{dfn}

\begin{exmp}
	\label{ex.weightings}
	When some edges have zero weight, different weightings of~$D$ can yield the same weak subdivision. Let $D=K_4$ and consider the 3‑bouquet (a vertex with three loops) which can be obtained from~$K_4$ either by contracting a~3‑path 
	or by contracting a~3‑star. In both cases, $K_4$~has three edges with weight $q=0$ (contracted) and three with weight $q+1$. However, the weight arrangement of the former case gives the $\piv$-multiset $[1,1,\ab 2,2]$, while that of the latter gives the multiset $[0,2,2,2]$.
\end{exmp}

\begin{prop}
	\label{p.d3min}
	Let \(m > k \geq 2\). Suppose that \(D\in\D_k\) has only trivial 3‑bonds and consider a balanced weighting of~\(D\) together with the implied \(G\in \B_m(D)\).
	\begin{enumenun}
		\item\label{pp.d3min1} If \(D\) has balanced \(\piv\)‑values, then \(G\) minimizes \(d_3(\ph)\) in \(\B_m(D)\).
		\item\label{pp.d3min2} If \(m\geq 2k\), then \(G\) minimizes \(d_3(\ph)\) in \(\B_m(D)\) if and only if \(D\) has balanced \(\piv\)-values.
	\end{enumenun}
\end{prop}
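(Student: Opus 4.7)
The first step is to reduce the minimization of $d_3(\ph)$ to that of $\Phi(G) := \sum_{u\in V(D)} b_3^{\rv}(u)$. Since every $G\in\B_m(D)$ has only trivial $2$-bonds by Theorem~\ref{t.3ec}\ref*{tt.3ec1}, and only trivial $3$-bonds by Corollary~\ref{c.bonds} combined with the hypothesis on~$D$, I count the $3$-disconnections (the $3$-edge subsets that contain some bond) by inclusion--exclusion. A trivial $3$-bond contains no $2$-bond, since its edges lie in three distinct chains; and a $3$-set containing two distinct trivial $2$-bonds must lie inside a single chain, because the symmetric difference of two $2$-bonds sharing one edge is a third $2$-bond, necessarily trivial. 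The count collapses to
\begin{equation*}
d_3(G) \;=\; (m-2)\,d_2(G) \;-\; 2\sum_i \binom{\ell_i}{3} \;+\; \sum_{u\in V(D)} b_3^{\rv}(u)\,,
\end{equation*}
and the first two terms depend only on the chain-length multiset, which is constant across~$\B_m(D)$.

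Next I make the convex structure of $\Phi$ explicit. Without loss of generality take $r\geq 0$, so chain lengths are $q$ or $q+1$; then $b_3^{\rv}(u) = f(\piv(u))$ with $f(j) = q^3(1+1/q)^j$, a convex function on $\{0,1,2,3\}$ that is \emph{strictly} convex precisely when $q\geq 1$, i.e.~when $m\geq 2k$. The linear constraint $\sum_u \piv(u) = 2r$ is invariant on~$\B_m(D)$. Part~\ref*{pp.d3min1} now follows from the Schur-convexity of $\sum f$: over all integer tuples $(\pi_u)$ with entries in $\{0,1,2,3\}$ summing to~$2r$, the minimum of $\sum f(\pi_u)$ is attained exactly at tuples whose maximum pairwise difference is at most one. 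Hence a balanced $\piv$-tuple attains the absolute minimum of $\Phi$, which is \emph{a fortiori} the minimum over~$\B_m(D)$.

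For Part~\ref*{pp.d3min2}, assume $m\geq 2k$ and let $G\in\B_m(D)$ have unbalanced $\piv$-values; I aim to produce $G'\in\B_m(D)$ with $\Phi(G')<\Phi(G)$. Write $H\subseteq E(D)$ for the set of weight-$(q+1)$ edges and pick $u,v\in V(D)$ with $\piv(u)\geq\piv(v)+2$. The key step is to find an $H$-alternating walk from $u$ to $v$ in $D$, starting on an edge in $H$ and ending on an edge outside $H$. Flipping the $H$-status along such a walk alters $\piv$ only at $u$ (by $-1$) and $v$ (by $+1$), since every internal vertex sees exactly one $H$-to-non-$H$ and one non-$H$-to-$H$ flip; strict convexity of $f$ then yields $\Phi(G')<\Phi(G)$.

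The main obstacle is establishing existence of this augmenting walk. A standard approach is to compare~$H$ to a reference subset~$H^*\subseteq E(D)$ of the same size whose degree sequence in~$D$ is balanced; existence of~$H^*$ in the bridgeless cubic graph~$D$ is a matching-theoretic fact derivable from Petersen's theorem, together with an inductive size adjustment that preserves balance. Then $H\triangle H^*$ decomposes into alternating cycles and paths, and since $\piv$ is unbalanced while $H^*$-degrees are balanced, at least one path of this decomposition must connect a vertex of $H$-surplus to one of $H$-deficit, supplying the required walk.
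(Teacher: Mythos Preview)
Your reduction of $d_3$ to $\Phi(G)=\sum_u b_3^{\rv}(u)$ is correct, and your Part~\ref{pp.d3min1} via Schur-convexity of $\sum_u f(\piv(u))$ is essentially the paper's argument in different clothing: the paper's three inequalities are exactly the second-difference conditions expressing convexity of~$f$.

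For Part~\ref{pp.d3min2} there are two genuine gaps. First, your strict-convexity claim is false in the range $2k\le m<3k$. Your ``WLOG $r\ge 0$'' hides this: in the paper's centred-remainder convention that range has $q=1$ and $r<0$, and pushing your symmetry through gives $\hat q=q-1=0$, where the formula $f(j)=\hat q^{\,3}(1+1/\hat q)^j$ is undefined; computing directly, $f(0)=f(1)=f(2)=0$ and $f(3)=1$, which is convex but not strictly so (the second difference at $j=2$ vanishes). The paper treats this case separately: for $r\in[-k\twodots-1]$ the balanced $\piv$-multiset contains only the values $0$ and~$-1$, so any sequence of elementary balancing moves ending at the balanced multiset must finish with a step governed by $f(0)+f(-2)>2f(-1)$, and \emph{that} inequality is strict when $q=1$.

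Second, your augmenting-walk construction is both unnecessary and incorrect as stated. The assertion that $H\triangle H^*$ ``decomposes into alternating cycles and paths'' is a theorem about matchings, where every vertex has degree at most~$1$ in each of the two sets; here $H$ and $H^*$ are arbitrary edge-subsets of a cubic graph, so a vertex can have all three of its edges in $H\setminus H^*$, and no alternating decomposition need exist. Fortunately you do not need this step at all. You already invoke Petersen's theorem to produce a balanced $H^*$; once that exists, and once strict Schur-convexity shows that the balanced multiset is the \emph{unique} minimiser of $\sum_u f$ among all multisets with sum~$2r$ (with the $2k\le m<3k$ patch above), it follows immediately that any $G$ with unbalanced $\piv$-values is beaten by the graph corresponding to~$H^*$. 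That two-statement structure --- ``only balanced minimises over all multisets'' plus ``balanced is realised in $\B_m(D)$'' --- is exactly the paper's route, and it avoids the walk entirely. Note also that the paper's existence argument for~$H^*$ takes some care to avoid zero-weight cycles when $q=1$; your ``inductive size adjustment'' would need to address this.
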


\begin{proof}[Proof of part~\ref*{pp.d3min1}]
	We first note that there actually exists a $D\in\D_k$ with only trivial 3‑bonds; consider the $k$‑Möbius ladder of Definition~\ref{d.mobius}. Fix $m\ge k$, which determines $r$ and~$q$ according to \eqref{euclid}. If $r=0$, then there is only one graph in~$\B_m(D)$ and the $\piv$-values of~$D$ are all zeros, which makes both propositions trivial. We can therefore assume that $r\neq 0$.
	
	Recall that $D$ has $2k$~vertices, since $D$ is cubic, which we label $(u_i)_{i=1}^{2k}$. Since $D$ is 3‑edge-connected, every 3‑disconnection of~$G$ is either a~3‑bond or contains a trivial 2‑bond. For every chain of length~$\ell$, there are $\binom{\ell}{2}(m-\ell)$ disconnections of the latter kind. The chain lengths are given by $q$~and~$r$, so we have
	\begin{equation}
		\label{d3g}
		d_3(G) = K + b_3(G)\,, \qquad\mathrlap{G\in\B_m(D)}
	\end{equation}
	where $K$ does not depend upon~$G$ nor on $D\in \D_k$. Since all 3‑bonds of~$D$ are trivial, all 3‑bonds of~$G$ are. Hence,
	\begin{equation}
		\label{b3G}
		b_3(G) = \sum_{\mathclap{i\in [1\twodots 2k]}} b_3^\rv(u_i) \,.
	\end{equation}
	
	Consider the possible values of~$b_3^\rv(u)$, given in Table~\ref{tb.3bonds}. Clearly, $b_3(G)$ in~\eqref{b3G} is a cubic polynomial in~$q$ and the leading coefficient is~$2k$. We also claim that the coefficient of the quadratic term equals~$2r$. To see this, first note that $\piv(u_i)$ has the same sign as~$r$, and then consider that there are $\abs{r}$~edges in~$D$ which are potentially counted by~$\piv(u_i)$. As $u_i$ ranges over the vertices of~$D$, each of these edges is counted twice, which implies that 
	\begin{equation}
		\label{pi2r}
		\sum_{\mathclap{i\in[1 \twodots 2k]}} \piv(u_i) = 2r\,. 
	\end{equation}
	The claim now follows by noting that the quadratic coefficient of~$b_3^\rv(u)$ equals~$\piv(u)$ in each row of the table.
	
	\begin{table}[tb!]
		\renewcommand*{\arraystretch}{1}
		\caption{Functions for counting and comparing the number of trivial 3‑bonds in balanced weak subdivisions of cubic, 3‑edge-connected graphs.}
		\begin{tabularx}{\textwidth}{C L X \Lr\Ll X \LX}
			\toprule
			\quad &\bm{\piv(u)} &&& \bm{b_3^\rv(u)} && \bm{\varphi_3(\piv)}  \\
			\midrule
			&\phantom{\pm}0 && \phantom{(q\pm1)^2}q^3 &{}= q^3 && 0  \\
			&\pm1 && (q\pm1)^1q^2 &{}= q^3 \pm q^2 && 0 \\
			&\pm2 && (q\pm1)^2q^1 &{}= q^3 \pm 2q^2 + q && q \\
			&\pm3 && (q\pm1)^3 &{}= q^3 \pm 3q^2 + 3q \pm 1 && 3q \pm 1  \\
			\bottomrule
		\end{tabularx}
		\label{tb.3bonds}
	\end{table}
	
	Let $\varphi(\piv)$---also denoted by $\varphi_3(\piv)$, see Table~\ref{tb.3bonds}---be the constant and linear terms of~$b_3^\rv$ regarded as a function of~$\piv$. Combining the above observations about~$b_3(G)$ with~\eqref{d3g} yields
	\begin{equation}
		\label{phisum}
		d_3(G) = K + 2kq^3 + 2rq^2 + \sum_{\mathclap{i\in[1 \twodots 2k]}} \varphi(\piv_i)\,, \qquad\mathrlap{G\in\B_m(D)}
	\end{equation}
	where only $\piv_i = \piv(u_i)$ depends upon the choice of~$G$. Hence, $G$ minimizes $d_3(\ph)$ within our class if and only if $G$ minimizes $\sum_i \varphi(\piv_i)$.

	Using Table~\ref{tb.3bonds}, it is easy to verify the following three pairs of inequalities. (Recall that the $\pm$~signs correspond to whether $r$ is positive or negative.)
	\begin{align}
		\varphi(0) + \varphi(\pm2) &\geq 2\varphi(\pm1) \label{ineq1}\\
		\varphi(0) + \varphi(\pm3) &\geq \varphi(\pm2) + \varphi(\pm1) \label{ineq2}\\
		\varphi(\pm1) + \varphi(\pm3) &\geq 2\varphi(\pm2) \label{ineq3}
	\end{align}
	The inequalities imply that the sum $\sum_i \varphi(\piv_i)$ is bounded below by the value the sum would have if the $\piv_i$'s were replaced by a balanced multiset also satisfying~\eqref{pi2r}. This implies part~\ref*{pp.d3min1}.
\end{proof}

\begin{proof}[Proof of part~\ref*{pp.d3min2}.]
	In view of~\ref*{pp.d3min1}, we need only show the \emph{only~if} direction. This is accomplished by proving the following two statements, given $m\ge 2k$.
	\begin{eninline}[label=\textbf{\Alph*:}, ref=\Alph*]
		\item\label{apriori} Of the \emph{a~priori} possible $\piv$-multisets, which satisfy~\eqref{pi2r}, only the balanced one minimizes $\sum_i \varphi(\piv_i)$.
		\item\label{exists} There exists a weighting of~$D$ which yields a balanced $(m-k,m)$-graph and balanced $\piv$-values.
	\end{eninline}
	
	Statement~\ref*{apriori} would follow immediately if \eqref{ineq1}, \eqref{ineq2} and \eqref{ineq3} were strict inequalities. This is close to being true. We first restrict to the case where $m \geq 3k$. This implies that $q\geq2$ or that $q=1$ and $r\ge 0$, and in these cases the inequalities are strict (recalling our assumption that $r\neq 0$).
	
	The remaining case is when $2k\leq m < 3k$, which implies $q=1$ and $r\in[-k\twodots -1]$. The positive $q$‑value guarantees that \eqref{ineq1} is strict. To see that this suffices to prove~\ref*{apriori}, consider the following: The $\piv$-multiset sums to~$2r$ and~so has mean~$r/k$, which implies that a balanced $\piv$-multiset in this case only contains the numbers $-1$ and~$0$. Hence, if one takes an imbalanced multiset belonging to this case and transforms it step by step into the corresponding balanced multiset, then \eqref{ineq1} will apply at the last step.
	
	We now prove statement~\ref*{exists}. The distillation $D$ is bridgeless and cubic since $D\in\D_k$, and~so by Petersen's Theorem \cite[Cor.~2.2.2]{Diestel} there is a perfect matching in~$D$. Let $E_1$ denote the $k$~edges of a perfect matching and let $E_2$ denote the remaining $2k$~edges.
	
	With this, we can specify how to weigh the edges of~$D$ to obtain balanced $\piv$-values and a resulting weak subdivision~$G$. We start by assigning weight~$q$ to all edges; we will then change $\abs{r}$~edges to either weight $q-1$ or $q+1$. Care must be taken so that we do not create a cycle with all zero weights, since this does not yield a weak subdivision of~$D$. This concern arises only when $2k\leq m < 3k$, so that $q=1$ and $r\in[-k \twodots -1]$. Otherwise, things are analogous for positive and for negative $r$, and we therefore consider the case where $r$ is negative.
	
	If on the one hand $r\in[-k \twodots -1]$, let the $r$~edges with weight $q-1$ be arbitrarily chosen from~$E_1$. Since these edges are independent, each vertex of~$D$ is incident with at most one edge with weight $q-1$, and~so the $\piv$-multiset contains only the values $-1$~and~$0$. There is no cycle of edges weighted $q-1$, and hence we have specified a weak subdivision of~$D$.
	
	If on the other hand $r\in [\ceil{-3k/2}\twodots k-1]$, we start by assigning weight $q-1$ to the $k$~edges of the perfect matching~$E_1$, which guarantees that each $\piv$-value is at least~$-1$. We now need to assign weight $q-1$ to another $\abs{r}-k \leq \ceil{k/2}$ edges (this bound holds also for the case where $r$ is positive) in such a~way that no vertex obtains $\piv$-value $-3$. This is equivalent to choosing a matching of size~$\abs{r}-k$ in~$E_2$. The graph~$E_2$ is 2‑regular, and is hence a union of cycles; each with at least three edges since $D$ is simple, and with a total number of $2k$~edges. This makes it is immediate that $E_2$ has a matching of size~$\ceil{k/2}$, and hence a matching of size $\abs{r}-k$, as required.
\end{proof}

\begin{thm}
	\label{t.d3min2}
	Given \(k\geq 2\) and \(m\geq 3k\), consider a balanced weighting of \(D\in\D_k\) which implies a graph \(G\in\B_m(D)\). Then \(G\) minimizes \(d_3(\ph)\) within the larger set \(\B_m(\D_k)\) if and only if all 3‑bonds of \(D\) are trivial and its \(\piv\)‑values balanced. (The latter condition is vacuous when $3k\divides m$.)
\end{thm}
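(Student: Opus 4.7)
The plan is to build on the calculation in Proposition~\ref{p.d3min} by incorporating contributions from nontrivial 3-bonds of~$D$ and to verify that, in the regime $m\geq 3k$, these contributions are strictly positive whenever they exist.

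First I would reuse the decomposition $d_3(G)=K+b_3(G)$ from the proof of Proposition~\ref{p.d3min}, observing that $K$ depends only on the multiset of chain lengths of~$G$, which for $G\in\B_m(\D_k)$ is determined by $m$ and $k$ alone; hence $K$ is constant on the larger set $\B_m(\D_k)$. Then, by Corollary~\ref{c.bonds} and the fact that in a cubic graph the trivial 3-bonds are in bijection with the vertices, I would split
\[
b_3(G)=\sum_{i=1}^{2k}b_3^{\rv}(u_i)+N(G),
\]
where $N(G)$ collects, over the nontrivial 3-bonds of~$D$, the products of the three incident chain weights.

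The trivial part is handled exactly as in Proposition~\ref{p.d3min}\ref{pp.d3min1}: Table~\ref{tb.3bonds} and~\eqref{pi2r} expand it as $2kq^3+2rq^2+\sum_i\varphi(\piv_i)$. Under $m\geq 3k$ we have $q\geq 1$, and moreover if $q=1$ then the centered remainder forces $r\geq 0$. A direct inspection shows that in the relevant sign all three inequalities \eqref{ineq1}--\eqref{ineq3} are then strict, so $\sum_i\varphi(\piv_i)\geq\Phi$, with equality if and only if the $\piv$-multiset of~$D$ is balanced, where $\Phi$ is the common value on any balanced multiset (depending only on $k$ and~$r$). The same case split ($q\geq 2$ gives weights $\geq q-1\geq 1$; $q=1$ forces $r\geq 0$, so weights lie in $\{1,2\}$) shows that every chain weight is at least~$1$; hence each nontrivial 3-bond contributes a product $\geq 1$ to $N(G)$, so $N(G)=0$ if and only if all 3-bonds of~$D$ are trivial. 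Combining yields
\[
b_3(G)\;\geq\;2kq^3+2rq^2+\Phi,
\]
with equality iff $D$ has only trivial 3-bonds \emph{and} balanced $\piv$-values.

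To finish, I would take $D^{\dag}\in\D_k$ to be the $k$-Möbius ladder of Definition~\ref{d.mobius}, which has only trivial 3-bonds, and combine it with the perfect-matching weighting of statement~\ref{exists} in the proof of Proposition~\ref{p.d3min}\ref{pp.d3min2} to realize balanced $\piv$-values; this exhibits a $G^{\dag}\in\B_m(\D_k)$ attaining the lower bound, and the biconditional follows. The subcase $3k\divides m$ (where $r=0$, every $\piv$-value is forced to zero, and the balance condition is vacuous) drops out of the same argument. The main obstacle is precisely the strictness analysis in the boundary case $q=1$: both the strict convexity entries in Table~\ref{tb.3bonds} and the weight-at-least-one bound on $N(G)$ hinge on ruling out zero-length chains, which is exactly what the hypothesis $m\geq 3k$ provides.
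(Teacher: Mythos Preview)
Your proposal is correct and follows essentially the same route as the paper's proof: both decompose $d_3(G)=K'+\Phi(G)+b_3^{\mathrm n}(G)$ (your $N(G)$ being the paper's $b_3^{\mathrm n}(G)$ via Corollary~\ref{c.bonds}), invoke the strictness of \eqref{ineq1}--\eqref{ineq3} already established in the proof of Proposition~\ref{p.d3min}\ref{pp.d3min2} for $m\geq 3k$, use the positivity of all chain weights in this regime to force $N(G)>0$ whenever $D$ has a nontrivial 3-bond, and realize the lower bound with the $k$-M\"obius ladder together with statement~\ref{exists}. The only cosmetic difference is that you re-verify the $q=1$ boundary case explicitly, whereas the paper simply cites its earlier analysis.
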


\begin{proof}
	A minor modification of and addition to the proof of Proposition~\ref{p.d3min} suffices. As noted, \eqref{d3g}~holds for any balanced weak $(m-k,m)$-subdivision~$G$ of any graph $D \in \D_k$, where $K$ is independent of~$G$ and~$D$. However, \eqref{b3G}, as well as \eqref{phisum}, has to be modified by adding a~term accounting for the number of nontrivial 3‑bonds of~$G$, which we may denote by $b_3^\n(G)$. By combining a~few terms in~\eqref{phisum} and adding $b_3^\n(G)$, we obtain
	\begin{equation}
		\label{phisum2}
		d_3(G) = K' + \Phi(G) + b_3^\n(G)\,, \qquad\mathrlap{G\in\B_m(\D_k)}
	\end{equation}
	where $K'$~denotes the first three terms of~\eqref{phisum}, which depend neither on~$G$ nor on~$D$, and $\Phi(G)$ denotes the ``$\varphi$‑sum'' which depends only upon the $\piv$-multiset. The previous proof goes through to show that a balanced $\piv$-multiset is sufficient and, if $m\geq2k$, necessary to minimize~$\Phi(\ph)$.
	
	Regarding $b_3^\n(G)$, suppose that $m\geq 3k$; then $G$, being balanced, has only positive chains relative to~$D$, and~so $D$ is~$D_G$, the proper distillation of~$G$. Clearly, $G$ has a nontrivial 3‑bond if and only if $D_G$ has a nontrivial 3‑bond. It follows that $d_3(G)$ is minimized, within the set under consideration, if and only if $D_G$ has balanced $\piv$-values with respect to~$G$ and in addition no nontrivial 3‑bonds.
\end{proof}

The following would be a natural extension of Theorem~\ref{t.3ec}\ref{tt.3ec1}. If true, then Theorem~\ref{t.d3min2} could be extended to hold for $m\geq 2k$. The \emph{only~if} direction is easy.

\begin{qst}
	For \(k\geq2\), let \(G\) be a bridgeless graph in \(\G_{n,n+k}\). Does \(G\) have a weak distillation in~\(\D_k\) with only trivial 3‑bonds if and only if all 2‑bonds and 3‑bonds of~\(G\) are trivial?
\end{qst}

\section{Moving edges between chains}
\label{s.move}
We will often want to move an edge from a longer chain to a shorter chain and study how the number of disconnecting sets of some size~$i$ changes. Suppose that we obtain $G'$ from~$G$ by moving an edge between chains. In principle, it would be straightforward to express and compare $c_i(G')$ and $c_i(G)$. Each term in~$c_i(\ph)$ is a product of $i$ chain lengths (cf.~\eqref{k1}) and corresponds to a connected spanning subgraph of~$D_G$ where $i$~edges are missing. However, the expressions become impractical, and the calculations do not seem very helpful to the intuition. We will prefer to count the number of edge sets which disconnect \emph{either $G$ or~$G'$}.

Necessary and sufficient conditions for an edge set to disconnect either $G$ or~$G'$, and hence be relevant for the difference $d_i(G)-d_i(G')$, are laid out in Lemma~\ref{l.move}, and the application is in Corollary~\ref{c.move}. (The proof of the lemma follows the corollary.) The reader might want to look ahead at Example~\ref{ex.li} for a demonstration.

\begin{lem} 
	\label{l.move}
	Move an edge~\(e\) within a given connected graph~\(G\), that is, contract~\(e\) and expand a new edge, identified with~\(e\), at an arbitrarily chosen vertex. Call the result \(G'\). Then an edge set~\(E\) disconnects~\(G\) but not~\(G'\) if and only if \(E\) satisfies both of the following conditions.
	\begin{enumenun}[label=(\arabic*)]
		\item\label{ll.moveG} In \(G\), the set~\(E\) contains exactly one bond~\(B\), and \(e\in B\).
		\item\label{ll.moveG'} In \(G'\), there is no bond containing~\(e\) and contained in~\(E\).
	\end{enumenun}
\end{lem}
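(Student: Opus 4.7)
The plan rests on a natural bijection between the bonds of $G$ avoiding $e$ and the bonds of $G'$ avoiding $e$, mediated by the common contraction $H := G/e = G'/e$. Such a bond $B$ of $G$ is the edge-boundary of a bipartition $(S, \bar S)$ of $V(G)$ with $G[S]$ and $G[\bar S]$ connected, and since $e \notin B$, its endpoints lie on the same side; this bipartition descends to one on $V(H)$ with the identical edge-boundary, and connectedness of both sides is preserved (the contraction merely merges two vertices already sitting together). The same argument for $G'$ yields, by composition, an edge-set-preserving bijection between the bonds of $G$ avoiding $e$ and the bonds of $G'$ avoiding $e$. This correspondence is the engine of the proof.

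For the forward direction, assume $G - E$ is disconnected while $G' - E$ is connected. First I show $e \in E$: if not, then $e$ is present as a non-loop in both subgraphs, and contracting it produces $H - E$ in each case, forcing equal connectedness and a contradiction. Second, every bond $B \subseteq E$ of $G$ must contain $e$: otherwise the bijection makes $B$ also a bond of $G'$ inside $E$, so $G' - E \subseteq G' - B$ would be disconnected. Third, such a bond is unique: two distinct bonds $B_1, B_2 \subseteq E$ of $G$, both containing $e$, would yield a nonempty cut $B_1 \triangle B_2 \subseteq E$ avoiding $e$, and decomposing this cut into bonds (per \cite[Lemma~1.9.3]{Diestel}) produces a bond in $E$ avoiding $e$, contradicting the previous step. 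Condition~(2) is then immediate, since $G' - E$ being connected forces $E$ to contain no bond of $G'$ whatsoever.

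The backward direction is short. Condition~(1) furnishes a bond in $E$, so $G - E$ is disconnected. Were $G' - E$ disconnected, $E$ would contain some bond $B'$ of $G'$; by~(2) we have $e \notin B'$, so the bijection returns a bond of $G$ in $E$ avoiding $e$, contradicting~(1).

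The main hurdle I anticipate is setting up the bond bijection cleanly, since the vertex structures of $G$ and $G'$ around $e$ can genuinely differ when the re-expansion is performed at a vertex of $H$ other than the merged vertex (the distillation-level subtlety already discussed in Section~\ref{s.framework}). Once the bipartition-level correspondence is nailed down—by tracking how contraction merges $u, v$ into $z$ and expansion splits the chosen vertex into two copies joined by $e$, verifying that connectedness of each side is preserved in both directions—the remainder of the argument is routine manipulation with cuts, bonds, and symmetric differences from Section~\ref{ss.graphs}.
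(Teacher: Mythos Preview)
Your proof is correct and rests on the same two ingredients as the paper's: the correspondence between bonds of $G$ avoiding $e$ and bonds of $G'$ avoiding $e$ via the common contraction $H=G/e=G'/e$, and the symmetric-difference argument to rule out two bonds through $e$. The only difference is organizational---you argue the two directions of the biconditional separately after setting up the bijection explicitly, whereas the paper runs a three-case analysis on $E$ and invokes the correspondence more tersely---but the mathematical content is identical.
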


\begin{rmk}
	If condition~\ref*{ll.moveG} of the lemma holds, then \ref*{ll.moveG'}~is equivalent to the following simpler but less useful condition:
	\begin{eninline}[label=(\arabic*')]
		\setcounter{eninlinei}{1}
		\item \(E\) does not disconnect~\(G'\).
	\end{eninline}
\end{rmk}

\begin{cor}
	\label{c.move}
	Let \(G'\) be obtained from~\(G\) as in the preceding lemma. For \(i \in [1 \twodots m]\) and \(1\leq j\leq i\), let \(x_{i,j}\) be the number of edge sets of size~\(i\) which fulfill conditions \ref*{ll.moveG} and \ref*{ll.moveG'} above and where the bond~\(B\) has size~\(j\). Let \(x'_{i,j}\) be the corresponding number of edge sets, but with the roles of \(G\) and \(G'\) interchanged in conditions \ref*{ll.moveG} and \ref*{ll.moveG'}. Then
	\begin{equation}
		\label{ddiff}
		d_i(G) - d_i(G') = \sum_{j=1}^i x_{i,j} - x'_{i,j}\,, \quad \mathrlap{\forall i \in [1 \twodots m]\,.}
	\end{equation}
\end{cor}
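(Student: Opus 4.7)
The plan is to derive the corollary as a direct book-keeping consequence of Lemma~\ref{l.move}, combined with the observation that the surgery taking $G$ to~$G'$ is reversible: contracting~$e$ produces the same graph $G/e$ starting from either $G$ or~$G'$, so $G$ can equally well be viewed as being obtained from~$G'$ by moving~$e$. Hence the lemma can be applied with the roles of $G$ and~$G'$ interchanged.

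First I would fix $i\in[1\twodots m]$ and partition the size-$i$ edge sets according to whether they disconnect $G$, $G'$, both, or neither. Writing $A_i$ for the collection of size-$i$ sets that disconnect~$G$ but not~$G'$ and $A'_i$ for the size-$i$ sets that disconnect~$G'$ but not~$G$, I obtain
\begin{equation*}
d_i(G)-d_i(G') \;=\; \abs{A_i} - \abs{A'_i}\,,
\end{equation*}
since the sets that disconnect both $G$ and~$G'$ contribute identically to $d_i(G)$ and~$d_i(G')$.

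Next I would invoke Lemma~\ref{l.move}: an edge set~$E$ lies in~$A_i$ precisely when it satisfies conditions~\ref*{ll.moveG} and~\ref*{ll.moveG'}; the bond~$B$ guaranteed by~\ref*{ll.moveG} has some size $j\in[1\twodots i]$, and this size is uniquely determined by~$E$ (since \ref*{ll.moveG} forces $B$~to be the unique bond of~$G$ contained in~$E$). Partitioning $A_i$ by the value of~$j$ therefore gives $\abs{A_i}=\sum_{j=1}^{i}x_{i,j}$. Applying the lemma with $G$~and~$G'$ exchanged — which is legitimate by the reversibility noted above — produces the analogous identity $\abs{A'_i}=\sum_{j=1}^{i}x'_{i,j}$. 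Substituting into the displayed formula yields~\eqref{ddiff}.

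There is essentially no obstacle here: the work has been done in Lemma~\ref{l.move}, and the corollary amounts to a telescoping of the inclusion–exclusion after noting symmetry of the construction. The only point requiring care is to justify the swap in the lemma's hypothesis — i.e.\ that~$G'$ really can be produced from~$G$ by contracting~$e$ and then expanding a new edge at a chosen vertex, with the roles of the two graphs reversible — so that the definition of~$x'_{i,j}$ fits exactly the counting provided by the lemma.
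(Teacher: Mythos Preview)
Your proposal is correct and is essentially the argument the paper intends: the paper states the corollary without proof, treating it as an immediate bookkeeping consequence of Lemma~\ref{l.move} together with the symmetry of the construction, and your write-up spells out exactly that. The only subtle point---that the bond~$B$ in condition~\ref*{ll.moveG} is unique so that the partition by~$j$ is well-defined---you handle correctly.
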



\begin{proof}[Proof of Lemma~\ref{l.move}]
	We consider different possibilities for the edge set~$E$, and show for each case that the two sides of the biconditional (if and only if statement) of the lemma either both hold or both fail.
	
	\emph{Case 1:} $E$ does not disconnect~$G$. It is immediate that neither side of the biconditional holds.
	
	\emph{Case 2:} $E$ contains some bond~$B$ in~$G$ such that $e\notin B$. Then $B$ is a bond also in $G/e = G'/e$, and thus $B$ is a bond in~$G'$. Again, both sides of the biconditional fail.
	
	\emph{Case 3:} $E$ contains exactly one bond~$B$ in~$G$, and $e\in B$. By using the reasoning of Case~2 with $G$ and $G'$ interchanged, we deduce that if $E$ contains a bond in~$G'$, then this bond necessarily contains~$e$. If there is such a bond in~$G'$, both sides of the biconditional fail. If there is no such bond, then both sides hold.
	
	To see that these three cases exhaust the possibilities for~$E$, suppose that $E$ contains two bonds in~$G$, both of which include~$e$. Then the symmetric difference of these bonds is a~cut which does not include~$e$, and this is covered by Case~2.
\end{proof}

\section{The uniquely optimal \texorpdfstring{$(\MakeLowercase{k} = 2)$}{(k=2)}-graphs}
\label{s.k=2}
We are now equipped to consider the sets of $(n,n+2)$-graphs. The optimal graphs of these sets have previously been described for $n\geq 4$ or $5$ in \cite{Li}, \cite{Tseng}, \cite{Wu} and \cite{Tutte}. The graphs are described in three steps, and~so it~seems that a treatment of the $(k=2)$-case needs to contain the following, in one way or another.
\begin{enumerate}[label={Step \arabic*:}, ref={Step \arabic*}]
	\item\label{i.identify} Identification of~$K_4$ as the relevant distillation.
	\item\label{i.balance} Proof that a $K_4$-subdivision has to be balanced to be possibly $p$‑optimal.
	\item\label{i.arrange} Specification of how longer or shorter chains need to be arranged when the chain lengths are not exactly the same.
\end{enumerate}

The literature contains a~few significant, previously undetected errors relating to \ref*{i.balance} and (less critically) to \ref*{i.identify}:
\begin{itemize}
	\item The middle part of the proof of Theorem~4 in~\cite{Li}, which is the standard reference for the $(k=2)$-problem, aims to show the above \ref*{i.balance}. However, in Example~\ref{ex.li} below we give counterexamples to the central claim.
	
	\item In both \cite{Tseng} and~\cite{Wu}, \ref*{i.balance} is justified by the solution to the corresponding continuous constrained optimization problem. It is not clear to us how the integer-valued solution follows from the continuous, except when they coincide.
	
	\item One distillation ($D_3$ in Figure~\ref{f.distillations}) out of four ($D_1$ through~$D_4$) is missing from the analysis of~\cite{Li}. (It was consequentially also missing from the implicit treatment via Tutte polynomials~\cite{Tutte}, but this was easily corrected by the authors, who provided an updated version~\cite{KahlUpdate}.)
\end{itemize}

Furthermore, our treatment brings the following two advantages. First, the only proper work needed within our framework is for proving \ref*{i.balance}. (\ref*{i.identify} is essentially given from Theorem~\ref{t.3ec}\ref{tt.3ec2} and \ref*{i.arrange} from Proposition~\ref{p.d3min}.) Second, graphs with multiple edges are naturally covered, needing no separate statements or proofs. (Gross and Saccoman~\cite{Gross} are usually credited with extending the main $(k=2)$-result to multigraphs by way of a separate argument.)

\begin{exmp}
	\label{ex.li}
	This example provides an infinite set of counterexamples to an erroneous deduction in~\cite{Li} which implies \ref{i.balance}. We focus on one particular counterexample, from which an infinite set is easily obtained. In the process, we derive~\eqref{K4ex}, which is used to properly prove \ref*{i.balance}.

	The mistaken idea in~\cite{Li} is that for any imbalanced pair of chains in a $K_4$-subdivision, the graph obtained by moving one edge from the longer to the shorter chain should have more spanning trees. 
	To the contrary, let $G$ be the $K_4$-subdivision shown in Figure~\ref{f.Li}, with chain lengths $(\ell_1, \ell_2, \ab\ell_3, \ab\ell_4, \ab\ell_5, \ell_6) = \ab(3,3, \ab 1, \ab 1, \ab 1, 5)$, 
	and let $G'$ be the graph obtained by moving an edge~$e$ from $\ell_1$ to~$\ell_3$. We will show that $d_3(G) - d_3(G') = -1$, which is to say that $G'$ has one less spanning tree than~$G$.
	
	\begin{figure}[hb!]
		\centering
		\includegraphics{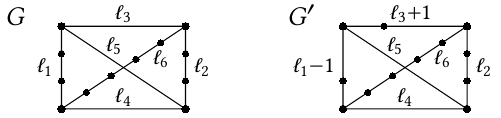}
		\renewcommand{\thefigure}{\arabic{figure}a}
		\caption{$G'$ is obtained from~$G$ by moving an edge from a longer to a shorter chain. $G'$ is in this sense ``more balanced'' than $G$, but has one less spanning tree nonetheless.}
		\label{f.Li}
	\end{figure}
	
	By Corollary~\ref{c.move} and since both graphs are bridgeless,
	\begin{equation}
		\label{K4d3bl}
		d_3(G) - d_3(G') = x_{3,2} - x'_{3,2} + x_{3,3} - x'_{3,3}\,.
	\end{equation}
	We explain step by step how to obtain~$x_{3,2}$ by its definition in Corollary~\ref{c.move}, counting the number of ways to construct an appropriate 3-sized edge set~$E$. The edge~$e$, which is moved from~$\ell_1$, belongs \emph{a~priori} to~$E$. WLOG the second edge should form a~2‑bond with~$e$ in~$G$ (Lemma~\ref{l.move}\ref{ll.moveG}), so it can be any one of the remaining $\ell_1 - 1$ edges in the same chain. The third edge~$e_3$ should not cause $E$ to contain any other bond in~$G$ (also Lemma~\ref{l.move}\ref{ll.moveG}) which precludes $e_3 \in \ell_1$. Furthermore, Lemma~\ref{l.move}\ref{ll.moveG'} forbids $e_3 \in \ell_3$ (which would create a 2‑bond in~$G'$) as well as $e_3 \in \ell_5$ (which would create a~3‑bond). The remaining choices for~$e_3$ are the edges in $\ell_2$,~$\ell_4$ and~$\ell_6$. We obtain $x_{3,2} = (\ell_1 -1)(\ell_2+\ell_4 + \ell_6)$.
	
	Similarly, $x'_{3,2} = \ell_3(\ell_2+\ell_4 + \ell_6)$, $x_{3,3} = \ell_6\ell_4$ and $x'_{3,3} = \ell_6\ell_2$. Insertion into~\eqref{K4d3bl} yields
	\begin{equation}
		\label{K4ex}
		d_3(G) - d_3(G') = (\ell_1 - 1 - \ell_3)(\ell_2+\ell_4 + \ell_6) + (\ell_4-\ell_2)\ell_6 \,. 
	\end{equation}
	With the given chain lengths, this sums to~$-1$. From \eqref{K4ex} it is immediate that one could instead start with any $K_4$-subdivision for which $\ell_1 - \ell_3 = 2$ and the other chains have the same lengths as in~$G$, and obtain the exact same result.
\end{exmp}

\begin{prop}
	\label{p.K4balanced}
	Let \(G\in \G_{n,n+2}\), where \(n\geq 1\). If~\(G\) is an imbalanced weak subdivision of~\(K_4\), then there is another weak \(K_4\)-subdivision \(H\in \G_{n,n+2}\) such that \(d_2(H) < d_2(G)\) and \(d_3(H) < d_3(G)\). Hence, an imbalanced weak \(K_4\)-subdivision can never be \(p\)‑optimal.
\end{prop}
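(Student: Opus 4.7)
The approach is to exhibit $H$ via one-edge moves $G\to H'$ analyzed with Corollary~\ref{c.move}. Theorem~\ref{t.d2}\ref{tt.d2decrease} automatically provides $d_2(H)<d_2(G)$ for any move between chains differing by $\ge 2$, so the work is in securing a strict $d_3$-decrease. Use the automorphism group $S_4$ of $K_4$ to label chains so that the three opposite $K_4$-edge pairs are $(\ell_1,\ell_2)$, $(\ell_3,\ell_4)$, $(\ell_5,\ell_6)$.

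\emph{Case A (some opposite pair imbalanced).} Repeating the bookkeeping of Example~\ref{ex.li}, but for an opposite move---WLOG $\ell_5\to\ell_6$ with $\ell_5\ge\ell_6+2$---yields
\[
 d_3(G)-d_3(H) = (\ell_5-1-\ell_6)(\ell_1+\ell_2+\ell_3+\ell_4)+(\ell_1-\ell_2)(\ell_3-\ell_4).
\]
Among the three signed pair-differences $d_i\coloneqq\ell_{2i-1}-\ell_{2i}$, pigeonhole forces two to share a sign; pick the remaining pair as the moving pair whenever that pair is imbalanced. Then the second term above is $\ge 0$, and the first factor of the first term is $\ge 1$; the second factor is also $\ge 1$ since at most three chains can be zero (any four edges of $K_4$ contain a cycle, which would violate the no-zero-cycle rule for weak subdivisions). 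In the leftover sub-case where only a pair with conflicting-sign partners is imbalanced, the bound $\ell_j+\ell_{j'}\ge|d_j|$ applied to the sum-factor, combined with $|d_{\text{moving}}|\ge 2$, shows the first term exceeds the magnitude of the (negative) second term by at least one. Thus $d_3(G)-d_3(H)\ge 1$.

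\emph{Case B (all opposite pairs balanced).} Some adjacent pair must then be imbalanced; WLOG $(\ell_1,\ell_3)$ sharing a vertex of $K_4$ with $d\coloneqq \ell_1-\ell_3\ge 2$. Formula~\eqref{K4ex} applied to $\ell_1\to\ell_3$, combined with the balanced-opposite bounds $\ell_2\in\{\ell_1-1,\ell_1\}$, $\ell_4\in\{\ell_3,\ell_3+1\}$, $\ell_6\le\ell_1$, simplifies after a short manipulation to
\[
 d_3(G)-d_3(H') \;\ge\; (d-2)\ell_1+(d-1)\ell_3,
\]
which is $\ge 1$ except in the sub-case $d=2$, $\ell_3=0$, $\ell_2=\ell_6=\ell_1=2$, $\ell_4=0$. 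Under the balanced-opposite constraint on $(\ell_5,\ell_6)$ this leaves only the two configurations $(2,2,0,0,1,2)$ and $G_\ast\coloneqq(2,2,0,0,2,2)$ up to $K_4$-symmetry; for the former, the alternative adjacent move $\ell_6\to\ell_4$ yields $d_3(G)-d_3(H)\ge 1$ by the same formula, so only $G_\ast$ remains.

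\emph{Exceptional graph $G_\ast$.} No one-edge $d_2$-decreasing move from $G_\ast$ strictly decreases $d_3$, so we bypass the move framework. Take $H$ to be the balanced, $\piv$-balanced weak $K_4$-subdivision of size~$8$ guaranteed by Proposition~\ref{p.d3min}\ref{pp.d3min2}, concretely one with chain multiset $\{2,2,1,1,1,1\}$ and the two length-$2$ chains placed on an opposite pair. A direct calculation via $d_3=\sum_i[\binom{\ell_i}{3}+\binom{\ell_i}{2}(m-\ell_i)]+\sum_{\text{vertex stars of }K_4}\ell_{i_1}\ell_{i_2}\ell_{i_3}$ yields $(d_2,d_3)=(4,24)$ for $G_\ast$ and $(2,20)$ for $H$, verifying the proposition. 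The main obstacle is the identification of this single exceptional configuration where one-edge analysis fails; it is resolved by direct comparison to a constructed balanced target.
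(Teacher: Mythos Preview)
Your decomposition into opposite-pair moves (Case~A) versus adjacent-pair moves (Case~B) is genuinely different from the paper's approach, which always moves between an adjacent pair chosen to have maximal gap and then splits into three cases on~$\delta$. The opposite-move formula you derive is correct, the sign-agreement pigeonhole is a nice device, and your direct handling of~$G_\ast$ by comparison to the balanced $\piv$-balanced target is a clean alternative to the paper's two-step move. However, two steps as written do not close.

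In Case~A's leftover sub-case, the inequality you assert does not follow from $S\ge|d_1|+|d_2|$ and $|d_{\text{moving}}|\ge 2$ alone: with $|d_1|=|d_2|=5$ and $|d_3|=2$ one would get $(|d_3|-1)(|d_1|+|d_2|)-|d_1||d_2|=10-25<0$. What actually saves the argument is that the three products $d_id_j$ cannot all be negative, so some pair has non-conflicting partners; in the leftover sub-case that pair is balanced (else you would move there) and is necessarily among the two non-moving pairs, forcing $\min(|d_1|,|d_2|)=1$. With that, $(|d_3|-1)(1+|d_j|)-|d_j|\ge 1$ is immediate. You need to state this.

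In Case~B, the bounds $\ell_2\in\{\ell_1-1,\ell_1\}$, $\ell_4\in\{\ell_3,\ell_3+1\}$ and $\ell_6\le\ell_1$ are stronger than ``balanced opposite pairs'' alone gives (which would also allow $\ell_2=\ell_1+1$, $\ell_4=\ell_3-1$, and places no constraint on~$\ell_6$). They do hold once you additionally take $(\ell_1,\ell_3)$ to realise the \emph{maximum} adjacent gap (since $\ell_2,\ell_4,\ell_6$ are each adjacent to one of $\ell_1,\ell_3$), but you must say so. With that fix, your bound $(d-2)\ell_1+(d-1)\ell_3$ and the identification of the two exceptional configurations are correct.
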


\begin{proof}
	Let $G$ have chain lenghts $\ell_1$ through~$\ell_6$, arranged as in Figure~\ref{f.K4}. Suppose that $\ell_1$~and~$\ell_3$ have the largest difference in length for all pairs of adjacent chains, and that $\ell_1$ is chosen as large as possible with this condition. We define $\delta = \ell_1 - \ell_3$ and note that $\delta\geq 1$, since $G$ is assumed to be imbalanced.
	
	\begin{figure}[htb!]
		\centering
		\addtocounter{figure}{-1}
		\renewcommand{\thefigure}{\arabic{figure}b}
		\includegraphics{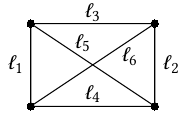}
		\caption{A general representation of a weak subdivision of~$K_4$, where the labels are weights representing chain lengths.}
		\label{f.K4}
	\end{figure}
	
	Let $G'$ be the graph obtained by moving one edge from~$\ell_1$ to~$\ell_3$, which gives another weak $K_4$-subdivision. Let $\Delta = d_3(G)-d_3(G')$, a formula for which was obtained in Example~\ref{ex.li} using Corollary~\ref{c.move}. Rearranging \eqref{K4ex} gives
	\begin{align}
		\label{K4}
		\Delta	&= (\delta - 1)(\ell_2+\ell_4 + \ell_6) - \ell_6(\ell_2-\ell_4) \nonumber\\
		&= (\delta-1)(\ell_2+\ell_4) + \ell_6((\delta-1)-(\ell_2-\ell_4))\,.
	\end{align}
	
	One would hope to prove that $\Delta$ is positive. However, in three particular cases, $\Delta$ will be zero, so that $G$~and~$G'$ have the same number of spanning trees. In these cases, we will repeat the same surgery on~$G'$ to obtain~$G''$ and a positive~$\Delta$.
	
	\emph{Case~1:} $\delta = 1$. In this case, we must have $\ell_1= a+2$, $\ell_2 = a$, and $\ell_3 = \ell_4 = \ell_5 = \ell_6 = a+1$, for some~$a\geq 0$. From~\eqref{K4}, we obtain
	\begin{equation*}
		\Delta = -\ell_6(\ell_2-\ell_4) = -(a+1)\left(a-(a+1)\right) = a+1 \geq 1 \,.
	\end{equation*}
	
	\emph{Case~2:} $\delta \geq 2$ and $\ell_2 - \ell_4 = \delta$. Insertion into~\eqref{K4} gives
	\begin{equation*}
		\begin{split}
			\Delta &= (\delta-1)(2\ell_4 + \delta) - \ell_6 \\
			&= (\delta-2)(\delta + 2\ell_4) + \ell_4 + (\delta - (\ell_6 - \ell_4)) \geq 0 \,,
		\end{split}
	\end{equation*}
	since all three terms are nonnegative. We consider the possibility of $\Delta$ being~$0$, which happens exactly when $\delta = 2$, $\ell_4 = 0$ and $\ell_6 - \ell_4 = \delta$, which in turn implies $\ell_6=2$. Since $\ell_2 - \ell_4 = \delta$ by assumption, we have $\ell_2 =2$. Furthermore, the assumption that $\ell_1-\ell_3 = \delta$ is as large as possible for adjacent chains forces $\ell_1$ to be~$2$ (since $\ell_1$ is also adjacent to the zero chain~$\ell_4$) and $\ell_3$ to be~$0$. Lastly, since $\ell_5$ is also adjacent to a zero chain, $\ell_5 \leq 2$. Thus, the chains of~$G$ are $(\ell_1, \ldots, \ell_6) = (2, 2, 0, \ab 0, \ab [0 \text{ or } \ab 1 \text{ or } 2], 2)$. For each of these three possibilities, the resulting $G'$ is imbalanced and distinct from any of the three possible graphs~$G$, and by starting from the beginning with~$G'$ instead of~$G$, we will obtain a graph~$G''$ with strictly lower $d_3$‑value. (In~fact, $G'$ will fall into Case~3.)
	
	\emph{Case~3:} $\delta \geq 2$ and $\ell_2 - \ell_4 \leq \delta-1$. We first assume that at least one of $\ell_2$~and~$\ell_4$ is positive. Then, insertion into~\eqref{K4} gives $\Delta \geq (\delta-1)(\ell_2 + \ell_4) > 0$. If on the other hand $\ell_2 = \ell_4 = 0$, insertion into~\eqref{K4} gives $\Delta = \ell_6(\delta-1)$. 
	Since $\ell_2$,~$\ell_4$ and~$\ell_6$ form a~cycle, and a \hyperlink{zerocycle}{cycle of zero chains} is not permitted, $\ell_6$ has to be positive, which implies $\Delta > 0$.
	
	Now, either let $H = G'$, or for any of the three exceptional cases, $H = G''$. By Theorem~\ref{t.d2}\ref{tt.d2decrease}, $H$ has a strictly lower $d_2$‑value than~$G$. With this, we have shown that $H$ is as required.
\end{proof}

\begin{prop}
	\label{p.onlyK4}
	Let \(G \in \G_{n,n+2}\), where \(n\geq 1\). If \(G\) is not a  weak subdivision of~\(K_4\), then there is a bridgeless \(H\in \G_{n,n+2}\) such that \(d_2(H) < d_2(G)\) and \(d_3(H) < d_3(G)\). Hence, any \(p\)‑optimal \((n,n+2)\)-graph is a weak \(K_4\)-distillation.
\end{prop}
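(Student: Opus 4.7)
The plan is to combine the main distillation theorem (Theorem~\ref{t.3ec}) with Proposition~\ref{p.K4balanced}, after first observing that $\D_2 = \{K_4\}$. This last identity can be read off Figure~\ref{f.Dgraphs} or checked directly in a line: a 3‑edge-connected cubic graph of exceedance 2 has $2k = 4$ vertices, cannot contain a loop (a cubic vertex carrying a loop has only one other incidence, which is therefore a bridge) or a pair of parallel edges (on four cubic vertices these produce a 2‑cut separating their endpoints from the remaining two vertices), and the unique simple cubic graph on four vertices is $K_4$.

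First I would dispose of the case where $G$ has a bridge: Proposition~\ref{p.bridge} already yields a graph in $\G_{n,n+2}$ with strictly smaller $d_i$ for every $i \in [1 \twodots 3]$, and iterating the construction at most a finite number of times terminates at a bridgeless $H$ still satisfying $d_2(H) < d_2(G)$ and $d_3(H) < d_3(G)$.

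Assuming now that $G$ is bridgeless, the core argument proceeds by graph equivalence. Theorem~\ref{t.3ec}\ref{tt.3ec2} furnishes a graph $G^\dag \in \G_{n,n+2}$ equivalent to $G$ and having a weak distillation in $\D_2 = \{K_4\}$; thus $G^\dag$ is a weak $K_4$‑subdivision. Because $G$ itself is assumed not to be one, Theorem~\ref{t.3ec}\ref{tt.3ec3} forces $G^\dag$ to be imbalanced relative to $K_4$, and Proposition~\ref{p.K4balanced} then produces a weak $K_4$‑subdivision $H \in \G_{n,n+2}$ with $d_2(H) < d_2(G^\dag)$ and $d_3(H) < d_3(G^\dag)$. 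Proposition~\ref{p.equiv} gives $d_i(G^\dag) = d_i(G)$ for every $i$, so $H$ satisfies the desired strict inequalities. Finally, $H$ is bridgeless because $K_4$ is 2‑edge-connected and weak subdivision preserves 2‑edge-connectedness.

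The argument is essentially a bookkeeping assembly of earlier results, with the substantive work already sitting inside Proposition~\ref{p.K4balanced}. I expect no serious obstacle. The only conceptual point to appreciate is that $G$ need not be a weak $K_4$‑subdivision in the first place, so one cannot feed $G$ directly into Proposition~\ref{p.K4balanced}; the detour through the equivalent $G^\dag$ is precisely what allows the nontrivial 2‑bonds of $G$ to be eliminated without changing its reliability function.
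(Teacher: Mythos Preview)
Your proposal is correct and follows essentially the same route as the paper: handle the bridge case via Proposition~\ref{p.bridge}, then for bridgeless~$G$ pass to an equivalent weak $K_4$-subdivision~$G^\dag$ via Theorem~\ref{t.3ec}\ref{tt.3ec2}, invoke~\ref{tt.3ec3} to obtain imbalance, and finish with Proposition~\ref{p.K4balanced} and Proposition~\ref{p.equiv}. Your added details---the direct verification of $\D_2=\{K_4\}$, the iteration in the bridge case to ensure $H$ is bridgeless, and the explicit remark that weak $K_4$-subdivisions are bridgeless---are welcome clarifications that the paper leaves implicit.
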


\begin{proof}
	If $G$ has a bridge, the statement follows from Proposition~\ref{p.bridge}, so we can assume $G$ to be bridgeless. From Theorem~\ref{t.3ec}\ref{tt.3ec2} it follows that $G$ is equivalent to some weak subdivision~$G^\dag$ of~$K_4$ (the only distillation in $\D_2$), and from \ref{t.3ec}\ref{tt.3ec3} that $G^\dag$ is imbalanced with respect to $K_4$. Since $G$~and~$G^\dag$ have the same reliability function (Proposition~\ref{p.equiv}), the statement now follows from Proposition~\ref{p.K4balanced}.
\end{proof}


\begin{thm}
	\label{t.K4optimal}
	For each \(n\geq 1\) and \(m = n +2\) there is a uniquely optimal graph in \(\G_{n,m}\). This graph is a balanced weak subdivision of~\(K_4\) (``weak'' is not needed when \(n\geq 4\)), which is specified up to isomorphism by the following set of additional conditions, where
	\(m\equiv r \pmod{6}\) and \(r \in [-2 \twodots 3]\). See Figure~\ref{f.firstk2}.
	\begin{itemize}
		\item If \(r\in \{0,\pm1\}\), no further condition is needed.
		\item If \(r= 2\) (\(r= -2\)), the two longer (shorter) chains correspond to a matching in~\(K_4\).
		\item If \(r=3\), the three longer chains correspond to a simple 3‑path in~\(K_4\).
	\end{itemize}
\end{thm}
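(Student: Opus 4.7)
The plan is to combine the preceding results in three stages and finish with a short case analysis on $m \pmod 6$. First I would invoke Propositions~\ref{p.onlyK4} and~\ref{p.K4balanced}: since $\D_2 = \{K_4\}$, together they force any $p$-optimal graph in $\G_{n,n+2}$ to be a balanced weak subdivision of $K_4$. Theorem~\ref{t.d2}\ref{tt.d2iff} then tells me that all members of $\B_m(K_4)$ share a single $d_2$-value, so the task reduces to uniquely minimizing $d_3$ within $\B_m(K_4)$; together with $d_1 \equiv 0$ (bridgeless) and $d_i = \binom{m}{i}$ for $i > k+1 = 3$, the sufficient condition~\eqref{suff} will then deliver unique optimality via the chain ``non-optimal $G \to$ some $G'\in\B_m(K_4)$ with strictly smaller $d_2$ and $d_3 \to G^*$''.

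Next I would note that every bond of $K_4$ is the 3-star of edges at a single vertex (hence trivial), so Proposition~\ref{p.d3min}\ref{pp.d3min1} shows that any balanced weighting of $K_4$ with balanced $\piv$-values minimizes $d_3$ within $\B_m(K_4)$. Whenever $m \geq 2k = 4$, i.e.\ $n \geq 2$, Proposition~\ref{p.d3min}\ref{pp.d3min2} yields the converse, producing a unique $d_3$-minimizer inside $\B_m(K_4)$ up to the isomorphism captured by the $\piv$-pattern. The lone remaining case $n = 1$, $m = 3$ is trivial, since the only bridgeless $(1,3)$-graph is the 3-bouquet, which matches the $r=3$ description.

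The main work is the case analysis: enumerating balanced $\piv$-patterns on $K_4$ for each residue $r \in [-2 \twodots 3]$. Writing $m = 6q + r$ as in~\eqref{euclid} and letting $S \subseteq E(K_4)$ denote the set of $|r|$ edges whose weight differs from $q$, I have $\piv(v) = \operatorname{sgn}(r)\cdot \deg_S(v)$, so the balanced-$\piv$ condition reduces to $\deg_S$ varying by at most $1$ on $V(K_4)$. The isomorphism types of $S$ are severely limited: $|S|\leq 1$ is automatically balanced; for $|S|=2$, only a perfect matching gives $\deg_S = (1,1,1,1)$ (a 2-path gives the imbalanced $(2,1,1,0)$); and for $|S|=3$, only a 3-path gives $\deg_S = (2,2,1,1)$ (a triangle and a 3-star give the imbalanced profiles $(2,2,2,0)$ and $(3,1,1,1)$). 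These are exactly the conditions stated in the theorem.

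Uniqueness up to isomorphism then follows from $\operatorname{Aut}(K_4) = S_4$ acting transitively on edges, on perfect matchings, and on 3-paths. For the ``weak'' parenthetical, I would observe that a chain of length $0$ occurs precisely when $q=0$ or when $q=1$ with $r<0$, equivalently $m\leq 5$, i.e.\ $n \leq 3$; for $n \geq 4$ all chain lengths are positive and the subdivision is proper. I do not foresee a serious obstacle: the heart of the argument is already carried by Propositions~\ref{p.onlyK4}, \ref{p.K4balanced}, and~\ref{p.d3min}, and what remains is a short, finite symmetry computation on $K_4$.
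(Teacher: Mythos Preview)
Your proposal is correct and follows essentially the same route as the paper's proof: reduce to $\B_m(K_4)$ via Propositions~\ref{p.onlyK4} and~\ref{p.K4balanced}, then invoke Proposition~\ref{p.d3min} and enumerate the subsets $S\subseteq E(K_4)$ giving balanced $\piv$-values for each residue $r$. One minor slip: $K_4$ also has (trivial) 4-bonds, so you should say ``every 3-bond'' rather than ``every bond of $K_4$'', but that is exactly the hypothesis Proposition~\ref{p.d3min} requires.
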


\begin{figure}[h!]
	\vspace{-\parsep}
	\centering%
	\includegraphics{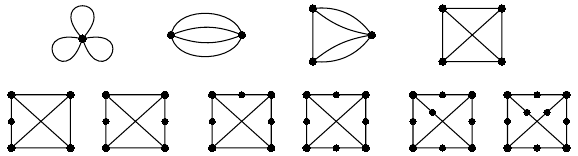}
	\caption{The first ten uniquely optimal $(n,n+2)$-graphs. The pattern continues and cycles every six graphs.}
	\label{f.firstk2}
\end{figure}

\begin{rmk}
	\label{r.k2generation}
	The following characterization due to~\cite{Li} generates the uniquely optimal $(n,n+2)$-graphs for $n\geq 4$: \emph{Cycle through the three perfect matchings of~$K_4$ and successively introduce a new vertex into each of the corresponding chains.} See the second row of Figure~\ref{f.firstk2}.
\end{rmk}

\begin{proof}
	Fix $n\geq 1$, so that $m \geq 3$, and recall that $\B_m(K_4)$ contains the balanced weak subdivisions of~$K_4$. If $r=0$, all chains have the same length, so $\B_m(K_4)$ contains a single perfectly balanced graph. There is also only one graph in the set when $r=\pm1$, since $K_4$ is edge-transitive.
	
	For the other cases, consider a graph $G\in\B_m(K_4)$ with a corresponding weighting of~$K_4$. By Proposition~\ref{p.d3min}, $G$~minimizes $d_3(\ph)$ in~$\B_m(K_4)$ if and only if $K_4$ has balanced $\piv$-values (except the \emph{only if} direction for $m=3$, but then there is only one graph). 
	
	Suppose that $r=2$ ($r=-2$). There are then two possible choices for~$G$: The two longer (shorter) chains can either be adjacent or nonadjacent, and this choice specifies $G$ up~to isomorphism. Only the latter choice, for which the edges of~$K_4$ with larger (smaller) weights form a perfect matching, gives balanced $\piv$-values. 
	
	Suppose that $r=3$. There are exactly three nonisomorphic graphs which can be obtained from different arrangements of the three longer chains, except when $m=3$, and the options are as follows. The longer chains can either 
	\begin{eninline}[label=\textit{\Alph*)}, ref=\textit{\Alph*}]
		\item emanate from one vertex, 
		\item form a~cycle, or
		\item\label{path} form a simple path. 
	\end{eninline}
	Only option~\ref*{path} gives $K_4$ a balanced $\piv$-multiset, namely $\mset{\piv_i}_{i=1}^4 = \mset{1,2,2,1}$. (Since the shorter chains do not form a~cycle, this construction is well-defined also for $m=3$.)
	
	We now use Proposition \ref{p.K4balanced} and~\ref{p.onlyK4} to conclude that the specified bridgeless graph uniquely minimizes $d_3(\ph)$ in~$\G_{n,m}$. It minimizes $d_2(\ph)$ by Theorem~\ref{t.d2}\ref{tt.d2iff}. Unique optimality follows.
\end{proof}

\section{The uniquely optimal \texorpdfstring{$(\MakeLowercase{k} = 3)$}{(k=3)}-graphs}
\label{s.k=3}

\subsection{Background}
\label{ss.k=3background}
If one considers and compares all possible distillations of exceedance~3, it can arguably seem quite intuitive that subdivisions of the complete bipartite graph~$K_{3,3}$ (Figure~\ref{f.K33}) should be most reliable.
As it happens, $K_{3,3}$ is similar to~$K_4$ in that its edge set can be partitioned into three perfect matchings. Boesch, Li and Suffel~\cite{Li} conjectured that UMRGs could be generated from~$K_{3,3}$ in analogy to the perfect matching method for~$K_4$ (see the remark after Theorem~\ref{t.K4optimal}) as follows.

\begin{falsethm}[\cite{Wang94}] 
\label{falsethm}
	Partition the nine edges of~\(K_{3,3}\) into three perfect matchings. A~sequence of UMRGs is obtained by cycling through these matchings, successively introducing a new vertex into each of the corresponding chains.
\end{falsethm}

However, the above is false, even though it has been considered a~theorem since~1994, when  a~proof of what was then called Boesch's Conjecture was claimed by Wang~\cite{Wang94}. (The result has been cited more than 50~times,
without any indication of a~mistake.) In addition to reaching the wrong conclusion, the same paper has an unrelated fatal flaw in its treatment of $\Pi_3$-subdivisions, as we demonstrate in Example~\ref{ex.WR}. A~corrected and exhaustive characterization of UMRGs with exceedance~3 is given in Section~\ref{ss.correction}. Our treatment is self-contained.

The two most common representations of~$K_{3,3}$ are shown in Figure~\ref{f.K33}. While representation~A most clearly displays the bipartite structure, representation~B indicates that $K_{3,3}$ belongs to the \emph{Möbius ladders}, defined below.

\begin{figure}[h!]
	\centering
	\newcommand{\wdot}{circle (0.4*\vxr-0.4*\et)}%
	\begin{tikzpicture}[x=0.8cm,y=0.8cm]
		\def\ey{1.5}
		\def\r{\ey/sqrt(3)}
		\draw (0,0) \co(a1)\vx -- (0,\ey) \co(b1)\vx -- (1,0) \co(a2)\vx -- ++(0,\ey) \co(b2)\vx -- ++(1,-\ey) \co(a3)\vx -- +(0,\ey) \co(b3)\vx (a1)--(b2) (a1)--(b3) (a2)--(b3) (a3)--(b1);
		\coordinate (C) at (5,\ey/2);
		\foreach \v in {0,60,...,300}{
			\draw ($(C)+({-\r*cos(\v)},{-\r*sin(\v)})$) \vx\co(v\v){} -- ($(C)+({-\r*cos(\v+60)},{-\r*sin(\v+60)})$);
		}
		\draw (v0) -- (v180) (v60) -- (v240) (v120) -- (v300);
		\filldraw[white] (v0) \wdot (v120) \wdot (v240) \wdot (b1) \wdot (b2) \wdot (b3) \wdot;
		\node[inner xsep=\inxsep, inner ysep=\inysep] (A) at ($(b1)+(-0.7,0.1)$) {\lrg{$\text{A}$}};
		\node[inner xsep=\inxsep, inner ysep=\inysep] at ({$(v0)-(0.4,0)$} |- A) {\lrg{$\text{B}$}};
	\end{tikzpicture}
	\caption{Two representations of the complete bipartite graph~$K_{3,3}$.}
	\label{f.K33}
\end{figure}

\begin{dfn}
	\label{d.mobius}
	For $k\geq 1$, the $k$-Möbius ladder, denoted by~$M_k$, is the graph obtained from~$C_{2k}$ by adding an edge between each pair of opposite vertices. The $2k$~edges in the cycle are called \emph{rails} and the $k$ additional edges are \emph{rungs} (but note that $M_k$ is edge transitive for $k\in\{1,2,3\}$).
\end{dfn}

We note that if Conjecture~\ref{cj.W}\ref{cjj.Wbasic} in Section~\ref{ss.k=4} holds, then every $p$‑optimal graph with $k\in \{1,2,3,4\}$ is a weak subdivision of the $k$‑Möbius ladder. However, the Petersen graph is uniquely optimal, so this pattern does not continue for $k=5$.

\subsection{Subdivisions of the triangular prism are not \texorpdfstring{$p$}{p}‑optimal}
\label{ss.Pi3}
Consider $\Pi_3$ with the edges labeled as in Figure~\ref{f.Pi3}. Regarding $\Pi_3$ as a ``circular ladder'', and in analogy with the Möbius ladders, we call the $c$‑edges \emph{rungs} and the $l$- and $r$‑edges \emph{rails}, and two rails with the same index are \emph{opposite} rails. Since $K_{3,3}$ is a Möbius ladder, it can be obtained from~$\Pi_3$ by introducing a ``half-twist'', or more precisely, by choosing a rung~$c_i$ in~$\Pi_3$ and then exchanging the incidences of two adjacent, opposite rails $l_i$ and~$r_i$, as shown in Figure~\ref{f.Pi3}. (Using $l_{i+1}$ and $r_{i+1}$ yields the same graph.)  We consider the corresponding surgery in an arbitrary weak $\Pi_3$-subdivision, and call it a \emph{reconnection} across~$c_i$.

\begin{figure}[t!]
	\centering
	\includegraphics{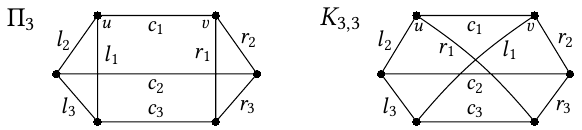}
	\caption{Reconnecting the rails $l_1$~and~$r_1$ across the rung~$c_1$ in the triangular prism~$\Pi_3$ yields~$K_{3,3}$. The edge labels also represent chain lengths of arbitrary weak subdivisions.}
	\label{f.Pi3}
\end{figure}

In this subsection, we will use the notation
\begin{equation}
	\label{ds1}
	\delta_i = l_i -r_i\quad \textrm{and}\quad \sigma_i = l_i + r_i\,,
\end{equation}
where $i \in \{1,2,3\}$, and $l_i$~and~$r_i$ refer to chain lengths of a weak subdivision of $\Pi_3$ or $K_{3,3}$ labeled according to Figure~\ref{f.Pi3}.

\begin{exmp}
	\label{ex.WR}
	We give an infinite family of ``strong'' counterexamples to the central claim of the proof of \cite[Theorem~9]{Wang94}. The proof idea was that given a $\Pi_3$-subdivision, then the $K_{3,3}$-subdivision obtained by reconnecting any two opposite rail chains, as in Figure~\ref{f.Pi3}, should have more spanning trees. Our counterexamples are strong in the sense that we provide a family of $\Pi_3$-subdivisions for which every possible rearrangement of chains into a $K_{3,3}$-subdivision has fewer spanning trees. We focus on a particular such example, and then indicate its generalization.
	
	Our particular example is the $\Pi_3$-subdivision and $(14,17)$-graph~$G$ in Figure~\ref{f.WR}, which has 50~more spanning trees than~$G'$ and 2~more than~$G''$. The latter two graphs are the only $K_{3,3}$-subdivisions with the same chain lengths as~$G$; to see this, consider that up to isomorphism there is only one way to choose two adjacent edges of~$K_{3,3}$ and only one way to choose two nonadjacent edges.
	(Equivalently, the line graph of $K_{3,3}$ is distance transitive with diameter~2.)
	
	\begin{figure}[hb!]
		\centering
		\includegraphics{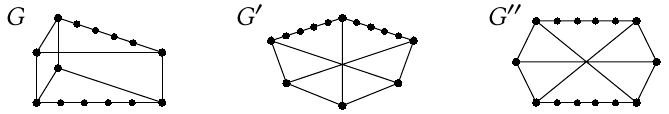}
		\caption{The $\Pi_3$-subdivision $G$ has more spanning threes than each of the two possible $K_{3,3}$-subdivisions with the same chain lengths.}
		\label{f.WR}
	\end{figure}
	
	We begin by writing
	\begin{equation}
		\label{d4bondsum}
		d_4(\ph) = d_{4,2}(\ph) + d_{4,3}(\ph) + d_{4,4}(\ph)\,, 
	\end{equation} 
	recalling that $d_{4,i}(\ph)$ is the number of 4‑disconnections in which the smallest bond has size~$i$. To see that $d_{4,2}(\ph)$ is the same for $G$, $G'$ and~$G''$, pair the chains of $G$ with those of~$G'$ (or~$G''$) by length and consider an induced pairing between the edges. 
	
	We now calculate $d_{4,3}(\ph)$ and $d_{4,4}(\ph)$ for the three graphs, starting with the latter. Every 4‑bond of~$\Pi_3$ and~$K_{3,3}$ is trivial (isolates one edge). Using Corollary~\ref{c.bonds}, we go over the chains of $G$,~$G'$ and~$G''$, multiply the lengths of their adjacent chains and sum the results to obtain
	\begin{alignat*}{2}
		d_{4,4}(G) &= &5^2 1^2 + 6\cdot5^1 1^3 + 2\cdot1^4 = 57\\
		d_{4,4}(G') &= &5^2 1^2 + 6\cdot5^1 1^3 + 2\cdot1^4 = 57\\
		d_{4,4}(G'') &= 2 \cdot{}&5^2 1^2 + 4\cdot5^1 1^3 + 3\cdot1^4 = 73
	\end{alignat*}
	
	Regarding $d_{4,3}(\ph)$, we note that $G'$ and~$G''$ have only trivial 3‑bonds, while $G$ in~addition has a nontrivial 3‑bond, separating the two 3‑cycles. Any $(4,3)$-disconnection is uniquely determined by the following two steps: 
	\begin{eninline}[label=(\arabic*)]
		\item Choose a~3‑bond in the proper distillation and pick one edge from each of the corresponding chains.
		\item Choose a fourth edge from any one of the other six chains; this does not create any additional bond for these graphs (as can be deduced from the 3‑edge-connectedness of their distillations).
	\end{eninline}
	This gives
	\begin{alignat*}{2}
		d_{4,3}(G) &=   & 4 \cdot 5^1 1^2 10 + 3\cdot 1^3 14 = 242\\
		d_{4,3}(G') &= 5^2 1^1 6 +{} &2\cdot5^1 1^2 10 + 3\cdot1^3 14 = 292\\
		d_{4,3}(G'') &=   & 4\cdot 5^1 1^2 10 + 2\cdot1^3 14 = 228
	\end{alignat*}
	
	Using \eqref{d4bondsum} and that $d_{4,2}(\ph)$ is unchanged gives $d_4(G')-d_4(G)=(57-57)+(292-242) = 50$, and similarly $d_4(G'')-d_4(G)=2$. It is straightforward to generalize the above equations and show that replacing the chains of length~5 by any two which are at least as long yields a similar counterexample.
\end{exmp}

\begin{lem}
	\label{l.Pibnf}
	For \(n\geq2\), let \(G\in \G_{n,n+3}\) be a weak \(\Pi_3\)-subdivision for which all rung chains~\(c_i\) are positive. Referring to~\eqref{ds1}, suppose that the following hold.
	\begin{enumenun}[label=(\arabic*)]
		\item\label{ll.bigdeltas} \( \delta_1 \geq 1\) and \(-\delta_3\geq 1\), with at least one strict inequality.
		\item\label{ll.span} \(\delta_2\in [0 \twodots -\delta_3-1]\).
	\end{enumenun}
	Then there is another weak \(\Pi_3\)-subdivision \(G''\in \G_{n,n+3}\), obtained by moving two edges between rail chains, such that \(d_i(G'') < d_i(G) \) for \(i\in \{2,3,4\}\).
\end{lem}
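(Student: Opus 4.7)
I would take $G''$ to be the graph obtained from $G$ by moving one edge from chain~$l_1$ into chain~$r_1$ (so $\delta_1$ decreases by~$2$) and one edge from chain~$r_3$ into chain~$l_3$ (so $-\delta_3$ decreases by~$2$); let $G^\ast$ denote the intermediate graph after only the first move. Both surgeries keep us inside weak $\Pi_3$-subdivisions of the same size, and hypothesis~\ref{ll.bigdeltas} guarantees each is legal.

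For $d_2$ I would apply Theorem~\ref{t.d2}\ref{tt.d2decrease} separately to $G\to G^\ast$ and $G^\ast\to G''$. Each move strictly decreases~$d_2$ when the affected rail pair differs in length by at least~$2$, while if the two chains differ by exactly~$1$ the move is merely a chain relabeling and leaves $d_2$ unchanged. The ``at least one strict inequality'' clause of~\ref{ll.bigdeltas} then gives $d_2(G'')<d_2(G)$.

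For $d_3$ I would split $d_3(G)$ into the number of $3$-disconnections that contain a trivial $2$-bond (a convex polynomial in the chain lengths, summed over chains) plus the total number~$b_3(G)$ of $3$-bonds. By Lemma~\ref{l.bonds}, $b_3(G)$ decomposes further into the six trivial contributions at the vertices of~$\Pi_3$ together with the single nontrivial contribution $c_1c_2c_3$ from $\{c_1,c_2,c_3\}$, which is invariant under the moves. The trivial-$2$-bond piece strictly shrinks under condition~\ref{ll.bigdeltas} by convexity, and a term-by-term comparison of the four trivial $3$-bonds at the endvertices of the affected rails---carried out via Corollary~\ref{c.move} in the spirit of Example~\ref{ex.li}---shows that this decrease dominates any increase in the trivial $3$-bond count, yielding $d_3(G'')<d_3(G)$.

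For $d_4$---the delicate step---I would adopt the decomposition $d_4 = d_{4,2}+d_{4,3}+d_{4,4}$ introduced in Example~\ref{ex.WR}. The piece $d_{4,2}$ is controlled by the $d_2$ analysis. Since $\Pi_3$ has only trivial $4$-bonds (each isolating a single edge), Corollary~\ref{c.bonds} expresses $d_{4,4}$ as a sum of nine quadruple products of chain lengths, and an adjacency-by-adjacency comparison shows $d_{4,4}$ is non-increasing under the surgery. The genuinely hard term is $d_{4,3}$: each $(4,3)$-disconnection arises by picking a $3$-bond of~$\Pi_3$, one edge from each of its three chains, and one extra edge from a different chain. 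The nontrivial $3$-bond $\{c_1,c_2,c_3\}$ contributes $c_1c_2c_3(\sigma_1+\sigma_2+\sigma_3)$, which is invariant since every~$\sigma_i$ is preserved. The six trivial $3$-bonds contribute cubic expressions whose net change couples $\delta_1$, $\delta_2$ and $-\delta_3$ with the invariant~$\sigma_i$; hypothesis~\ref{ll.span}, $\delta_2\in[0\twodots -\delta_3-1]$, is precisely the sign constraint that forces this net change to be nonpositive. Summing the three pieces gives $d_4(G'')<d_4(G)$. The main technical hurdle is this final inequality, where the balancing of the first and third rail pairs must be harmonized with the fixed middle imbalance~$\delta_2$.
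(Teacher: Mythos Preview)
Your construction of $G''$ and the argument for $d_2$ agree with the paper's. For $d_3$ you ultimately defer to Corollary~\ref{c.move}, which is the paper's tool as well, though your ``convexity'' framing of the trivial-$2$-bond piece is imprecise (the relevant count $\binom{\ell}{2}(m-\ell)+\binom{\ell}{3}$ is not convex in~$\ell$).

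There is a genuine gap in your $d_4$ argument: the claim that $d_{4,4}$ is non-increasing under the two moves is false. The trivial $4$-bond isolating~$c_1$ contributes $l_1l_2r_1r_2$; after the first move this becomes $(l_1-1)l_2(r_1+1)r_2$, and $(l_1-1)(r_1+1)-l_1r_1=\delta_1-1\geq 0$, with strict inequality when $\delta_1\geq 2$. The second move does not touch any of these four chains, so the increase persists in~$G''$. Similar increases occur for the bonds isolating $c_2$ and~$c_3$, and a full tally shows that $b_4$ typically grows under the surgery. Since your plan bounds $d_{4,2}$, $d_{4,3}$, $d_{4,4}$ separately and in the stated directions, it cannot be repaired without changing strategy.

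The paper does not decompose $d_4$ by smallest-bond size. Instead it applies Corollary~\ref{c.move} to each of the two edge moves, obtaining $d_4(G)-d_4(G')$ and $d_4(G')-d_4(G'')$ as explicit polynomials in the chain lengths via the $x_{4,j}$~terms. Neither of these differences is sign-definite on its own (for instance, \eqref{d4Pi3} contains the negative summand $c_3\delta_3(c_1+c_2+\sigma_2)$). The key step is that when the two differences are \emph{added}, a large block of terms cancels---the paper colors them to make this visible---and the surviving expression~\eqref{d4Pi3II} is a sum of four terms, each nonnegative under hypotheses~\ref{ll.bigdeltas} and~\ref{ll.span}, with the first or third strictly positive. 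The same two-move computation with cancellation yields~\eqref{d3Pi3II} for $d_3$. This cancellation across the pair of moves is the essential mechanism, and it is invisible if one splits $d_4$ by bond type first.
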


\begin{rmk}
	With a strategic chain labeling (e.g.\ as given in the proof of Proposition~\ref{p.noPi3}) the above lemma applies to many more $\Pi_3$-subdivisions than is immediately apparent.
\end{rmk}

\begin{proof}
	We obtain $G''$ from~$G$ by first moving one edge from the chain~$l_1$ to~$r_1$, calling the result~$G'$, and then moving one edge from~$r_3$ to~$l_3$. (See Figure~\ref{f.piEdgeMove}. Note that the labeling agrees with  $\Pi_3$ in Figure~\ref{f.Pi3}.) These surgeries are possible since
	$l_1$ and~$r_3$ are positive by assumption~\ref*{ll.bigdeltas}.
	By Theorem~\ref{t.d2}\ref{tt.d2decrease} we have $d_2(G'')<d_2(G)$, since we have twice moved an edge from a longer to a shorter chain, and at least one of the two chain pairs was initially imbalanced.
	
	\begin{figure}[hb!]
		\centering
		\includegraphics{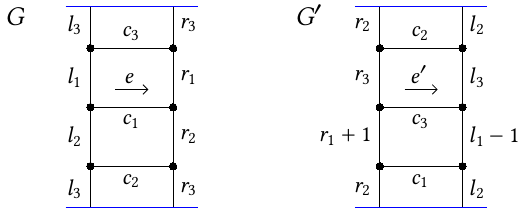}
		\caption{An edge $e$ of the weak $\Pi_3$-subdivision $G$ is moved, yielding~$G'$, in which in turn another edge~$e'$ is moved to obtain~$G''$ (not shown).}
		\label{f.piEdgeMove}
	\end{figure}
	
	We use the shorthand~$d_i$ for~$d_i(G)$, with primes added for~$G'$ and~$G''$. Let $C= c_1 + c_2 + c_3$. By~Corollary~\ref{c.move} (for a detailed example, see the derivation of \eqref{K4ex} in Example~\ref{ex.li}) we obtain
	\begin{align}
		\label{d3Pi3I}
		d_3 -{} d_3' &= x_{3,2} - x_{3,2}' + x_{3,3} - x_{3,3}' \nonumber\\
		&= (l_1-1)(C+\sigma_2 +\sigma_3) - r_1(C+\sigma_2 +\sigma_3) + (c_1l_2 + c_3l_3) - (c_1r_2 + c_3r_3) \nonumber \\
		&= (\delta_1 -1)(c_1 + c_2 + c_3 + \sigma_2 + \sigma_3) + c_1\delta_2 +c_3\delta_3 \,.
	\end{align}
	We can make substitutions in~\eqref{d3Pi3I}, with a little help from Figure~\ref{f.piEdgeMove}, to furthermore obtain
	\begin{equation*}
		d_3' - d_3'' = (-\delta_3 -1)(c_1 + c_2 + c_3 + \sigma_1 + \sigma_2) + c_3(-\delta_1+2) +c_2(-\delta_2) \,.
	\end{equation*}
	Adding the two equations above yields, with some rearranging and canceling,
	\begin{equation}
		\label{d3Pi3II}
		d_3 - d_3'' = (\delta_1 -1)(c_1 + c_2 + \sigma_2 + \sigma_3) + (-\delta_3-1)(c_1 + \sigma_1 + \sigma_2) + c_1\delta_2 + c_2(-\delta_3-1 - \delta_2)\,.
	\end{equation}
	By our assumptions, it is immediate that each of the four terms above is nonnegative, and that the first or second is positive. Hence, $d_3(G'') < d_3(G)$.
	
	Finally, we study how the number of 4‑disconnections changes. Corollary~\ref{c.move} gives
	\begin{equation}
		d_4 - d_4' = (x_{4,2} - x_{4,2}') + (x_{4,3} - x_{4,3}') + (x_{4,4} - x_{4,4}') \,.
	\end{equation}
	We calculate the three terms above. The first two can be compared to~\eqref{d3Pi3I}. Letting $C_\times = c_1c_2 + c_1c_3 + c_2c_3$, one can verify the following. (Except for in $(l_1-1)$, all minus signs come from the subtracted primed $x$‑terms, and there is no cancellation involved.)
	\begin{align}
		\label{d4move}
		x_{4,2} - x_{4,2}' &= \big((l_1-1)-r_1\big)\big(C_\times + c_1\sigma_3 + c_3\sigma_2 + c_2(\sigma_2 + \sigma_3) + \sigma_2\sigma_3\big) \nonumber \\
		x_{4,3} - x_{4,3}' &= c_1(l_2 - r_2)(c_2 + c_3 + \sigma_3) + c_3(l_3-r_3)(c_1 + c_2 + \sigma_2)\\
		x_{4,4} - x_{4,4}' &= c_1c_2(l_3 - r_3) + c_3c_2(l_2-r_2) \nonumber
	\end{align}
	Adding these three equations yields the following. (The five colors will soon be explained.)
	\begin{equation}
		\label{d4Pi3}
		\begin{multlined}[.9\displaywidth]
			d_4-d_4' = (\delta_1-1)(\textcolor{Red3}{C_\times} + c_1\sigma_3 + \textcolor{Green4}{c_3\sigma_2} + c_2(\sigma_2+\sigma_3) + \sigma_2\sigma_3) \\
			+ c_1\delta_2(\textcolor{Gold4}{c_2} + \textcolor{Orchid4}{c_3} + \sigma_3) + c_3\delta_3(\textcolor{Red3}{c_1+c_2}+\textcolor{Green4}{\sigma_2}) + \textcolor{Red3}{c_1c_2\delta_3} + \textcolor{Blue3}{c_3c_2\delta_2}\,.
		\end{multlined}
	\end{equation}
	We then obtain $d_4'-d_4''$ by making substitutions in~\eqref{d4Pi3} according to Figure~\ref{f.piEdgeMove}:
	\begin{equation*}
		\begin{multlined}[.9\displaywidth]
			d_4'-d_4'' = (-\delta_3-1)(\textcolor{Red3}{C_\times} + \textcolor{Green4}{c_3\sigma_2} + c_2\sigma_1 + c_1(\sigma_1+\sigma_2) + \sigma_1\sigma_2) \\
			+ c_3(-\delta_1+2)(\textcolor{Red3}{c_1 + c_2} + \textcolor{Green4}{\sigma_2}) + c_2(-\delta_2)(\textcolor{Blue3}{c_3}+\textcolor{Gold4}{c_1}+\sigma_1) + \textcolor{Orchid4}{c_3c_1(-\delta_2)} + \textcolor{Red3}{c_2c_1(-\delta_1+2)}\,.
		\end{multlined}
	\end{equation*}
	
	Adding the last two equations, all the same-colored terms cancel. With a slight rearrangement we obtain
	\begin{align}
		\label{d4Pi3II}
		\begin{split}
			d_4-d_4'' ={}& (\delta_1-1)( c_1\sigma_3 + c_2(\sigma_2+\sigma_3) + \sigma_2\sigma_3) + c_1\delta_2\sigma_3 \\
			&+ (-\delta_3-1)\big(c_1(\sigma_1+\sigma_2) + \sigma_1\sigma_2\big) + c_2\sigma_1(-\delta_3-1-\delta_2)\,.
		\end{split}
	\end{align}
	
	Just as for~\eqref{d3Pi3II}, it is immediate that each of the four terms of~\eqref{d4Pi3II} are nonnegative. The only additional observation needed to see that the first or third is positive is that it follows from $\delta_1\geq 1$ and $-\delta_3\geq1$ that $\sigma_1\geq 1$ and $\sigma_3\geq 1$.
	Hence, $d_4(G'') < d_4(G)$.
\end{proof}

\begin{prop}
	\label{p.noPi3}
	For  \(n\geq 2\), let \(G\in \G_{n,n+3}\) be a weak \(\Pi_3\)-subdivision which is not also a weak \(K_{3,3}\)-subdivision (equivalently, which has three positive rung chains~\(c_i\)). Then there is a weak \(K_{3,3}\)-subdivision \(G'\in \G_{n,n+3}\) such that \(d_2(G')\leq d_2(G)\), \(d_3(G')<d_3(G)\) and \(d_4(G') < d_4(G)\), which is therefore strictly more reliable than~\(G\). (In particular, a \(\Pi_3\)-subdivision cannot be \(p\)‑optimal for any~\(p\).)
\end{prop}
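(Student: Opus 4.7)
I would prove the proposition by strong induction on $d_2(G)$, at each step splitting into two cases: either Lemma~\ref{l.Pibnf} is applicable (and reduces to a smaller instance), or it is not (in which case we construct $G'$ directly by reconnection). Throughout, we exploit the symmetries of~$\Pi_3$---the threefold rotation of the rung indices and the top--bottom flip which negates every $\delta_i$---to maximize the class of~$G$ for which Lemma~\ref{l.Pibnf} can be invoked.

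\textbf{Case A (lemma applies).} If some symmetry-admissible labeling of~$G$ satisfies the hypotheses of Lemma~\ref{l.Pibnf}, apply it to obtain a weak $\Pi_3$-subdivision $G''$ with all $c_i>0$ (the surgery only moves edges between rails) and strictly smaller $d_2,d_3,d_4$. The inductive hypothesis then supplies a weak $K_{3,3}$-subdivision $G'$ with $d_i(G') \le d_i(G'') < d_i(G)$ for $i\in\{3,4\}$ and $d_2(G') < d_2(G)$. A brief symmetry case analysis shows that Case~A covers every $G$ whose $\delta_i$ have mixed signs with no single pair disproportionately dominating the others.

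\textbf{Case B (direct reconnection).} The residual configurations are essentially those in which all $\delta_i$ have the same weak sign (in particular the fully balanced case $\delta_1=\delta_2=\delta_3=0$), plus a small number of mixed-sign configurations in which a positive $\delta_i$ dominates the magnitudes of the negatives. For each such~$G$, obtain~$G'$ by reconnecting $G$ across some rung~$c_i$ (see Figure~\ref{f.Pi3}); this produces a weak $K_{3,3}$-subdivision with the same chain-length multiset as~$G$. Since both $\Pi_3$ and $K_{3,3}$ are $3$-edge-connected, Theorem~\ref{t.3ec}\ref{tt.3ec1} implies that every $2$-bond of $G$ and $G'$ is trivial, so $d_2(G') = d_2(G)$. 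For $d_3$ and $d_4$, Corollary~\ref{c.bonds} expresses each count as a sum of chain-length products over the bonds of the respective distillation. The decisive ingredient is that $\Pi_3$ carries exactly one nontrivial $3$-bond (the three rungs), contributing $c_1 c_2 c_3 > 0$, whereas $K_{3,3}$ has none; in the residual configurations the trivial $3$-bond contributions are matched vertex by vertex, yielding $d_3(G') < d_3(G)$, and an analogous grouping of $4$-disconnections as in~\eqref{d4bondsum} yields $d_4(G') < d_4(G)$.

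\textbf{Main obstacle.} The delicate step will be the $d_4$ comparison in Case~B. Example~\ref{ex.WR} shows that direct reconnection can substantially \emph{increase} $d_4$ when the rail imbalances are large, so the Case~A reductions are genuinely indispensable; conversely, in Case~B one must verify that the positive contribution $c_1 c_2 c_3$ from $\Pi_3$'s nontrivial $3$-bond (which feeds into $d_{4,3}$ and also interacts with the $(4,4)$ count) dominates every trivial-bond excess on the $K_{3,3}$ side. The cleanest route I foresee is a chain-by-chain calculation mirroring the derivation of~\eqref{d4Pi3II} in the proof of Lemma~\ref{l.Pibnf}.
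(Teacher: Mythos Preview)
Your overall architecture---iterate Lemma~\ref{l.Pibnf} to reduce, then reconnect across a rung once the rail imbalances are tame---is exactly the paper's approach. The paper phrases the iteration as ``repeatedly apply and relabel'' rather than as induction on~$d_2$, but this is cosmetic.

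Two concrete points need correction. First, your $d_3$ argument in Case~B is wrong: after reconnecting across~$c_1$, the trivial $3$-bonds at the two endvertices of~$c_1$ are \emph{not} matched vertex by vertex. In~$\Pi_3$ they contribute $c_1 l_1 l_2 + c_1 r_1 r_2$, while in the resulting~$K_{3,3}$ they contribute $c_1 r_1 l_2 + c_1 l_1 r_2$; the difference is $c_1\delta_1\delta_2$, not zero. The full count is $b_3(G)-b_3(G')=c_1(c_2 c_3 + \delta_1\delta_2)$, and you need $\delta_1\delta_2\ge 0$---which is precisely what the ``residual'' labeling must guarantee---to conclude strict positivity.

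Second, your dichotomy misses the boundary case $(\delta_1,\delta_2,\delta_3)=(1,0,-1)$: here Lemma~\ref{l.Pibnf} does not apply (neither inequality in its condition~(1) is strict), and the configuration is neither ``same weak sign'' nor ``a positive $\delta_i$ dominating the negatives.'' The paper isolates this as a separate case and verifies the reconnection inequalities directly by plugging into the explicit formulas~\eqref{d3rec} and~\eqref{d4rec}. More generally, the paper's precise residual condition after normalization is $\delta_3\ge -\delta_2$ (with $\delta_1\ge\delta_2\ge 0$), and the $d_4$ computation you flag as the main obstacle is carried out exactly as you anticipate---a term-by-term expansion culminating in~\eqref{d4rec}, whose four summands are nonnegative under that condition and whose first summand is strictly positive because some rail is nonzero.
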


\begin{proof}
	Since the graphs of Figure~\ref{f.Pi3} become isomorphic by contracting a $c_i$‑edge in both graphs, any weak $\Pi_3$-subdivision with a zero-length rung chain is also a weak $K_{3,3}$-subdivision. Hence, $G$ has only positive rung chains $c_i$, and no such~$G$ can be a weak $K_{3,3}$-subdivision, since the positive rung chains creates nontrivial 3‑bonds. This proves ``equivalently''.
	
	Now, we label the chains of~$G$ as in Figure~\ref{f.Pi3} and according to the following restrictions. \begin{eninline}[label={\textbf{R\arabic*:}},ref={R\arabic*}]
		\item\label{R1} At least two of the values $\delta_1$, $\delta_2$ and~$\delta_3$, see~\eqref{ds1}, should be nonnegative.
		\item\label{R2} If one or two of the $\delta_i$‑values are zero, then the nonnegative numbers should include the largest of the three absolute values. \item\label{R3} $\delta_1\geq \delta_2 \geq \delta_3$.
	\end{eninline}
	(\ref*{R1}~and~\ref*{R2} can be satisfied by appropriately choosing the ``left'' and ``right'' sides of~$G$, while~\ref*{R3} is satisfied by an appropriate indexing.) Note that $\delta_1\geq \delta_2\geq 0$ by \ref*{R1}~and~\ref*{R3}. Consequently, if $\delta_3$ is negative, then \ref*{R2} forces $\delta_1$ to be positive. We consider three exhaustive cases.
	
	\emph{Case 1:} $\delta_3 < -\delta_2$ and $\max(\delta_1, -\delta_3) >1$. We first show that the conditions \ref{ll.bigdeltas} and~\ref{ll.span} of Lemma~\ref{l.Pibnf} are fulfilled. Since $\delta_2$ is always nonnegative, $\delta_3 < -\delta_2$ implies both $\delta_2\in [0 \twodots -\delta_3-1]$, which is~\ref{ll.span}, and that $\delta_3$ is negative. The latter, as noted above, implies that $\delta_1$ is positive. The strict inequality additionally required by~\ref{ll.bigdeltas} is given by $\max(\delta_1, -\delta_3) >1$.
	
	Applying Lemma~\ref{l.Pibnf}, we have a weak $\Pi_3$-subdivision $H\in \G_{n,n+3}$ with unchanged rung chains and strictly smaller $d_i$‑values than $G$. By relabeling the chains of $H$ according to \ref*{R1}, \ref*{R2} and \ref*{R3} and repeatedly applying Lemma~\ref{l.Pibnf} if necessary, we eventually obtain a graph which falls into Case~2 or Case~3.
	
	\emph{Case 2:}  $\delta_3 < -\delta_2$ and $\max(\delta_1, -\delta_3) \leq 1$. Just as in Case~1 it follows that $\delta_1$ is positive, $\delta_2$ is nonnegative and $\delta_3$ is negative. But then $\max(\delta_1, -\delta_3) \leq 1$ implies that $\delta_1 = -\delta_3 =1$, and since $-\delta_3 > \delta_2$, it follows that $\delta_2=0$. Thus $(\delta_1,\delta_2,\delta_3)=(1,0,-1)$. We finish Case~2 after Case~3, which is the main work.
	
	\emph{Case 3:} $\delta_3\geq -\delta_2$.
	Construct $G'$ from~$G$ by reconnecting the chains $l_1$ and~$r_1$ across~$c_1$, as shown in Figure~\ref{f.Pi3}. (Interpreting the labels of Figure~\ref{f.Pi3} as weights, this surgery is well-defined even if $l_1$, $r_1$ or both have zero length. Since the rung chains~$c_i$ are assumed positive, there are no \hyperlink{zerocycle}{``zero cycles''}, so $G$ and~$G'$ are well-defined weak subdivisions of $\Pi_3$ and $K_{3,3}$, respectively.)
	
	Since the chain lengths are unchanged, we have that $d_2(G') = d_2(G)$, and furthermore that the number of 3- and 4‑disconnections which contain a~2‑bond is the same. In particular, $d_3(G) - d_3(G')=b_3(G')-b_3(G)$. The 3‑bonds of the weak distillations $\Pi_3$ and~$K_{3,3}$ are exactly the same, except for the trivial bonds separating an endvertex of~$c_1$ (see Figure~\ref{f.Pi3}) and for the nontrivial bond $\{c_1,c_2,c_3\}$ of~$\Pi_3$. Hence, using Corollary~\ref{c.bonds},
	\begin{equation}
		\label{d3rec}
		b_3(G) - b_3(G') = c_1c_2c_3 + c_1l_1l_2 + c_1r_1r_2 - c_1r_1l_2 - c_1l_1r_2
		= c_1(c_2c_3 + \delta_1\delta_2)\,.
	\end{equation}
	This expression is positive since the $c_i$'s are positive and since $\delta_1$ and~$\delta_2$ are nonnegative. Hence, $d_3(G')<d_3(G)$.
	
	To obtain $d_4(G) - d_4(G')$, we use the same notation as in \eqref{d4bondsum} and the fact that $d_{4,2}(\ph)$ is unchanged, writing
	\begin{equation}
		\label{d4i}
		d_4(G) - d_4(G') = d_{4,3}(G) - d_{4,3}(G') + d_{4,4}(G) - d_{4,4}(G')\,.
	\end{equation}
	
	We start by considering $d_{4,3}(G) -d_{4,3}(G')$. The 3‑bonds of $G$~and~$G'$ which are not involved in~\eqref{d3rec} are in a natural one-one correspondence, which induces a one-one correspondence between the $(4,3)$-disconnections which contain any of these bonds. We deduce that any other $(3,4)$-disconnection is uniquely determined by choosing some 3‑bond counted in~\eqref{d3rec} and adding an edge from any of the other six chains. Thus, recalling the notation of~\eqref{ds1},
	\begin{align}
		\label{d43}
		\begin{split}
			d_{4,3} -d_{4,3}' ={}& c_1\big(c_2c_3(\sigma_1 + \sigma_2 + \sigma_3) + l_1l_2(\textcolor{Red3}{r_1 + r_2} + c_2 + c_3 + \sigma_3) + r_1r_2(\textcolor{Red3}{l_1 +l_2} + c_2 \\
			&+ c_3 + \sigma_3) - r_1l_2(\textcolor{Red3}{l_1 + r_2} + c_2 + c_3 +\sigma_3) - l_1r_2(\textcolor{Red3}{l_2 + r_1} + c_2 + c_3 + \sigma_3)\big)
		\end{split}\nonumber\\
		={}& c_1\big(c_2c_3(\sigma_1 + \sigma_2 + \sigma_3) + (l_1-r_1)(l_2-r_2)(c_2 + c_3 + \sigma_3)\big)\,,
	\end{align}
	where the terms in red cancel (because every edge set which contains one edge each from four out of the five chains in $\{c_1, l_1, l_2,\ab r_1, r_2\}$ disconnects both $G$ and~$G'$).
	
	To obtain $d_{4,4}(G) -d_{4,4}(G')$, we use that every 4‑bond of $\Pi_3$ and~$K_{3,3}$, and hence of $G$~and~$G'$, is trivial. Furthermore, every labeled edge in the two distillations has the same adjacencies, except for the four edges $l_1$, $l_2$, $r_1$ and~$r_2$; see Figure~\ref{f.Pi3}. Thus, using Corollary~\ref{c.bonds}, we multiply the four adjacent weights for each of these four edges, summing the results for~$\Pi_3$ and subtracting the results for~$K_{3,3}$. This yields
	\begin{align}
		\label{d44}
		d_{4,4} -d_{4,4}' ={}& c_1c_3l_2l_3 + c_1c_2l_1l_3 + c_1c_3r_2r_3 + c_1c_2r_1r_3
		- c_1c_3l_3r_2 - c_1c_2l_3r_1 - c_1c_3l_2r_3 - c_1c_2l_1r_3 \nonumber \\
		={}& c_1c_2(l_1-r_1)(l_3-r_3) + c_1c_3(l_2-r_2)(l_3-r_3)
	\end{align}
	
	By~\eqref{d4i}, we add \eqref{d43}~and~\eqref{d44} to obtain
	\begin{align}
		\label{d4rec}
		d_4-d_4' &= c_1\big(c_2c_3(\sigma_1 + \sigma_2 + \sigma_3) + \delta_1\delta_2(c_2 + c_3 + \sigma_3) + c_2\delta_1\delta_3 + c_3\delta_2\delta_3 \big) \nonumber\\
		&= c_1\big(c_2c_3(\sigma_1 + \sigma_2 + \sigma_3) + \delta_1\delta_2\sigma_3 + c_2\delta_1(\delta_2 + \delta_3) + c_3\delta_2(\delta_1 + \delta_3) \big) \,. 
	\end{align}
	We know that $c_1$ is positive, and that the four terms of the second factor are at least nonnegative: $\delta_2 + \delta_3 \geq 0$ since $\delta_3 \geq -\delta_2$ by assumption, and then it follows from~\ref{R3} that $\delta_1 + \delta_3 \geq 0$. Furthermore, since $c_2$ and~$c_3$ are positive, and at least some $l_i$ or~$r_i$ is positive, the first term is positive. Hence, $d_4(G')<d_4(G)$.
	
	\emph{Case 2, finished:} Let $G'$ be obtained as in Case~3.
	Inserting $(\delta_1,\delta_2,\delta_3)=(1,0,-1)$ into \eqref{d3rec} implies $d_3(G)-d_3(G')= c_1c_2c_3 > 0$, and into~\eqref{d4rec} yields $d_4(G) - d_4(G') = c_1c_2\big(c_3(\sigma_1 + \sigma_2 + \sigma_3)-1\big)$. Since $\sigma_1 + \sigma_2 + \sigma_3\geq 4$, we conclude that $d_4(G) - d_4(G') > 0$, which finishes the~proof.
\end{proof}

\subsection{Zeroing in on balanced weak \texorpdfstring{$K_{3,3}$}{K33}-subdivisions}
\label{ss.K33}
In Figure~\ref{f.K33labels} we introduce a new edge labeling for~$K_{3,3}$ (as usual the labels will also represent chain lengths), in which three edges share a vertex if and only if they all share the same letter or index. Furthermore, the 4‑cycles are the edge sets that combine two out of three letters and two out of three indices; for example, $a_3a_1c_1c_3$ specifies a~4‑cycle in~$K_{3,3}$.

\begin{figure}[hb!]
	\centering
	\includegraphics{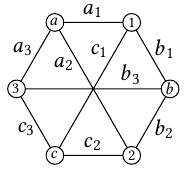}
	%
	%
	\caption{The complete bipartite graph $K_{3,3}$ with an edge labeling used extensively in Section~\ref{ss.K33}. The edge labels can represent chain lengths of weak $K_{3,3}$-subdivisions.}
	\label{f.K33labels}
\end{figure}

With the labels of Figure~\ref{f.K33labels} representing chain lengths, we define
\begin{equation}
	\label{ds2}
	\delta_i = a_i -b_i\quad \textrm{and}\quad \sigma_i = a_i + b_i + c_i\,, \quad\mathrlap{\quad i\in\{1,2,3\}\,.}
\end{equation}

We will need formulas for how $d_3(\ph)$ and~$d_4(\ph)$ change when an edge is moved between adjacent chains. Suppose that $a_1$ is positive in~$G$, and let $G'$ be obtained by moving one edge from~$a_1$ to~$b_1$. By applying Corollary~\ref{c.move} we obtain
\begin{equation}
	\label{d3K33}
	d_3(G)-d_3(G') = (\delta_1 - 1)(\sigma_2 + \sigma_3) + a_2a_3 - b_2b_3 \,,
\end{equation}
where the first term originates from $x_{3,2}-x_{3,2}'$, the second term is~$x_{3,3}$ and the third is~$x_{3,3}'$ (cf.~\eqref{d3Pi3I}). The change in~$d_4(\ph)$ will be derived when proving Proposition~\ref{p.K33balanced}.

\begin{lem}
	\label{l.d3KBB}
	Let \(G\) be a weak \(K_{3,3}\)-subdivision in which \(a_1\) and~\(b_1\) are adjacent chains with the largest possible length difference among all adjacent pairs (see Figure~\ref{f.K33labels}). Move an edge from~\(a_1\) to~\(b_1\) to obtain~\(G'\). If \(a_1-b_1\geq 2\), then \(d_3(G') < d_3(G)\).
\end{lem}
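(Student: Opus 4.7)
I would begin from \eqref{d3K33}, which expresses
\[ d_3(G) - d_3(G') = (\delta_1 - 1)(\sigma_2 + \sigma_3) + a_2 a_3 - b_2 b_3, \]
and apply the elementary identity $2(a_2 a_3 - b_2 b_3) = s_2 \delta_3 + s_3 \delta_2$, with $s_i := a_i + b_i$, together with the decomposition $\sigma_i = s_i + c_i$, to regroup the right-hand side as
\[ d_3(G) - d_3(G') = s_2 \alpha_3 + s_3 \alpha_2 + (\delta_1 - 1)(c_2 + c_3), \qquad \alpha_i := \delta_1 - 1 + \delta_i/2. \]

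The maximality hypothesis is what renders each $\alpha_i$ nonnegative: for $i \in \{2, 3\}$ the chains $a_i$ and $b_i$ share vertex $i$ of $K_{3,3}$ and are therefore adjacent, so $|\delta_i| \leq \delta_1$, giving $\alpha_i \geq \delta_1/2 - 1 \geq 0$ whenever $\delta_1 \geq 2$. Since $s_i, c_i \geq 0$, the regrouped expression is a sum of nonnegative terms, yielding $d_3(G) - d_3(G') \geq 0$ at once.

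The main work, and the only real obstacle, is upgrading this to a strict inequality, which amounts to excluding the simultaneous vanishing of every summand. The factor $\delta_1 - 1 \geq 1$ first forces $c_2 = c_3 = 0$. Next, $\alpha_i = 0$ requires both $\delta_1 = 2$ and $\delta_i = -2$, and hence $s_i \geq |\delta_i| = 2 > 0$; this immediately rules out the mixed case in which exactly one $\alpha_i$ vanishes. It therefore remains to exclude two scenarios, in each of which I plan to produce a \emph{zero cycle} in $K_{3,3}$---which is forbidden by the definition of a weak subdivision. If $\alpha_2 = \alpha_3 = 0$, then $\delta_1 = 2$ and $\delta_2 = \delta_3 = -2$, and the vertex-$i$ bound $|b_i - c_i| \leq 2$ combined with $c_i = 0$ and $b_i = a_i + 2 \geq 2$ forces $b_i = 2$ and $a_i = 0$ for $i \in \{2,3\}$, rendering the 4-cycle $a_2\, c_2\, c_3\, a_3$ a zero cycle. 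If instead $\alpha_2, \alpha_3 > 0$, then $s_2 \alpha_3 = s_3 \alpha_2 = 0$ forces $s_2 = s_3 = 0$, so all six of $a_2, b_2, c_2, a_3, b_3, c_3$ vanish and the 4-cycle $a_2\, b_2\, b_3\, a_3$ is likewise a zero cycle. Either contradiction completes the proof.
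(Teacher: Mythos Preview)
Your proof is correct, and it takes a genuinely different route from the paper's. Both arguments start from \eqref{d3K33}, but the paper splits $a_2a_3-b_2b_3$ asymmetrically as $\delta_2 a_3+\delta_3 b_2$ and then, to make all summands nonnegative, needs an extra relabeling so that $\delta_2\ge\delta_3$; its equality analysis then branches on whether $\delta_1+\delta_{2,3}$ is zero. You instead use the symmetric polarization $a_2a_3-b_2b_3=\tfrac12(s_2\delta_3+s_3\delta_2)$, which yields the regrouping $\Delta=s_2\alpha_3+s_3\alpha_2+(\delta_1-1)(c_2+c_3)$ with $\alpha_i=\delta_1-1+\delta_i/2$. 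This is cleaner: no auxiliary ordering of $\delta_2,\delta_3$ is needed, and nonnegativity follows directly from $|\delta_i|\le\delta_1$ and $\delta_1\ge2$. Your equality case splits (both $\alpha_i$ zero, both positive, and the mixed case dispatched by $s_i\ge|\delta_i|>0$) are tidy, and both terminal cases produce a forbidden zero $4$-cycle in $K_{3,3}$, just as in the paper but via slightly different cycles. The trade-off is that your $\alpha_i$ are half-integers, which is cosmetically less pleasant than the paper's integer coefficients, but the argument is otherwise shorter and more transparent.
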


\begin{proof}
	With $G$ as in the lemma, suppose that $a_1-b_1\geq 2$. Let the other chains be labeled as in Figure~\ref{f.K33labels} and~so that $a_2-b_2\geq a_3 - b_3$. (Take a labeling where the latter does not hold. Swap the places of vertices $2$ and~$3$ and then relabel to agree with the figure.)
	Rearrange~\eqref{d3K33}, with $d_3(G)-d_3(G') = \Delta$, to obtain the following. Our assumptions $\delta_1\geq 2$, $\delta_1 \geq \max(\abs{\delta_2},\abs{\delta_3})$ and $\delta_2\geq \delta_3$ are used for the inequalities.
	\begin{align}
		\Delta &= (\delta_1 - 1)(\sigma_2 + \sigma_3) + (a_2-b_2)a_3 + (a_3 - b_3)b_2 \nonumber\\
		&=(\delta_1 -1)(c_2 + c_3) + (\delta_1 -2)(a_2+b_3) + (\delta_1 + \delta_2)a_3 + (\delta_1 + \delta_3)b_2 + (\delta_2-\delta_3) \nonumber\\
		\label{d3K33_2}
		&\geq (c_2 + c_3) + (\delta_1 + \delta_2)a_3 + (\delta_1 + \delta_3)b_2 + (\delta_2 - \delta_3) \geq 0
	\end{align}
	
	It remains to show that $\Delta = 0$ is impossible. Suppose for contradiction that $\Delta = 0$, so that all four terms of~\eqref{d3K33_2} are zero. In particular, $c_2=c_3=0$ and $\delta_2 = \delta_3$, which we denote by~$\delta_{2,3}$.
	
	If on the one hand, $\delta_1 + \delta_{2,3} > 0$, then, considering the second and third terms of~\eqref{d3K33_2}, $a_3=b_2=0$. Then, $\delta_2 = \delta_3$ simplifies to $a_2 = -b_3$, so $a_2$ and~$b_3$ are also zero chains. However, this would give us several \hyperlink{zerocycle}{``zero cycles''}, such as $c_2c_3a_3a_2$, which is impossible.
	
	If on the other hand, $\delta_1 + \delta_{2,3} = 0$, then, in particular, $\delta_1 = -\delta_2 = b_2-a_2$,  so $b_2 = \delta_1 + a_2$. Since $b_2$~is adjacent to the zero chain~$c_2$, we also have $b_2 = b_2-c_2 \leq \delta_1$. Put together, this gives $a_2\leq 0$, so $a_2$ is a zero chain. Analogously, we see that $a_3$ is a zero chain, which leads to the ``zero cycle'' $c_2c_3a_3a_2$. We conclude that $\Delta > 0$.
\end{proof}

\enlargethispage{-2mm}
\begin{prop}
	\label{p.K33balanced}
	If \(G\) is an imbalanced weak \(K_{3,3}\)-subdivision, then there is a graph~\(G'\), obtained by moving an edge between two adjacent chains of~\(G\), such that \(d_2(G')\leq d_2(G)\), \(d_3(G') < d_3(G)\) and \(d_4(G')<d_4(G)\).
\end{prop}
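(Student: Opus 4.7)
The plan is to produce $G'$ by transferring a single edge between two adjacent chains of $G$, chosen according to a labeling of the chains as in Figure~\ref{f.K33labels}. Since $G$ is imbalanced and the line graph of $K_{3,3}$ has diameter two, the chain lengths cannot all be equal, so some adjacent pair differs in length. I split the argument according to whether the maximum adjacent-chain-length difference is at least two or equal to one.

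When the maximum adjacent difference is at least~$2$, I would label so that $(a_1,b_1)$ realizes it with $\delta_1:=a_1-b_1\ge 2$ and let $G'$ be the result of moving an edge from $a_1$ to $b_1$; then $d_3(G')<d_3(G)$ is exactly Lemma~\ref{l.d3KBB}. When instead every adjacent pair differs by at most~$1$, all chain lengths lie in a window $\{\ell,\ell+1,\ell+2\}$ for some $\ell$, and a longest chain~$x$ and a shortest chain~$y$ differ by exactly two and must be non-adjacent. I would pick a common line-graph neighbor~$w$ of $x$ and~$y$ (one exists by the diameter-two property, and it must have length $\ell+1$) and let $G'$ be the result of transferring one edge from $w$ to $y$. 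Relabeling so that this becomes the move $a_1\to b_1$, one has $\delta_1=1$; the adjacency constraints at the non-shared endpoints of $w$ and~$y$ then force the sibling-product inequality $a_2a_3-b_2b_3>0$, and~\eqref{d3K33} delivers $d_3(G')<d_3(G)$. In both cases $d_2(G')\le d_2(G)$ is immediate from Theorem~\ref{t.d2}\ref{tt.d2decrease}, since the edge is moved from a chain that is at least as long as the receiving one.

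The main obstacle is the inequality $d_4(G')<d_4(G)$. My approach is to mimic Lemma~\ref{l.Pibnf}: Corollary~\ref{c.move} gives
\[
d_4(G)-d_4(G')=(x_{4,2}-x'_{4,2})+(x_{4,3}-x'_{4,3})+(x_{4,4}-x'_{4,4}),
\]
and I would enumerate each term directly. Because every $3$-bond of $K_{3,3}$ is trivial (meets at a vertex of the distillation) and every $4$-bond is trivial (isolates a single edge), Corollary~\ref{c.bonds} cleanly controls the bond-counting contributions, and the enumeration is simpler than the one underlying~\eqref{d4Pi3} since $K_{3,3}$ has no nontrivial $3$-bonds. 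Collecting terms, I expect the resulting expression to decompose into contributions proportional to $(\delta_1-1)$, to $a_2a_3-b_2b_3$, and to nonnegative cross-terms involving $c_1$ and the $\sigma_i$'s. The same two-case split then yields strict positivity, with the ``no zero cycle'' constraint on weak $K_{3,3}$-subdivisions ruling out the only potentially degenerate weightings in which the argument would otherwise collapse.
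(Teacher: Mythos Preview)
Your plan for $d_2$ and $d_3$ is essentially the paper's and is fine. The gap is entirely in the $d_4$ step, where your expected decomposition is wrong and your labeling is underspecified.

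Carrying out the Corollary~\ref{c.move} enumeration actually yields (this is \eqref{d4K33_1} in the paper)
\[
\Delta = (\delta_1-1)\bigl((a_2{+}a_3)(b_2{+}b_3)+(c_2{+}c_3)(a_2{+}a_3{+}b_2{+}b_3)\bigr)
+ \delta_2\bigl(a_3b_3+c_3(a_3{+}b_3)\bigr)
+ \delta_3\bigl(a_2b_2+c_2(a_2{+}b_2)\bigr).
\]
There is no $c_1$ anywhere, and the part beyond the $(\delta_1-1)$ term does not collapse to a multiple of $a_2a_3-b_2b_3$; it carries $\delta_2$ and $\delta_3$ \emph{separately}, each against a nonnegative factor. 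When $\delta_2,\delta_3<0$ these contributions are genuinely negative, and merely choosing $(a_1,b_1)$ to maximize $\delta_1$ does not control them. Concretely, take chain lengths $(a_1,b_1,a_2,b_2,a_3,b_3,c_1,c_2,c_3)=(2,0,0,2,0,2,1,1,1)$: this is a valid weak $K_{3,3}$-subdivision (no zero cycle), $\delta_1=2$ realizes the maximum adjacent difference, yet $\Delta=8-4-4=0$. So your prescribed move does not give $d_4(G')<d_4(G)$, and the ``no zero cycle'' clause does not rescue it.

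The paper fixes this by imposing three further tie-breaking conditions on the labeling beyond your C1 (namely C2: $a_1$ maximal; C3: $\delta_2$ maximal; C4: $b_2$ minimal) and then running a four-case analysis on the signs of $\delta_1+\delta_2$ and $\delta_1+\delta_3$. The configuration above is precisely the paper's Case~2, which is shown to violate C3, forcing a relabeling into Case~3 or~4. Case~3 ($\delta_1+\delta_3=0$ but $\delta_1+\delta_2>0$) is the delicate heart of the argument: it uses C2--C4 repeatedly, splits on $\delta_1\ge3$ versus $\delta_1=2$, and the latter subcase walks through three values of an auxiliary parameter, each requiring its own zero-cycle or C3-based exclusion. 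This is the main content of the proposition, and nothing in ``nonnegative cross-terms'' plus ``zero cycles rule out degeneracies'' substitutes for it.
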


\begin{proof}
	Since there is only one way to choose two adjacent edges of~$K_{3,3}$ up to isomorphism (as noted in Example~\ref{ex.WR}), we can assume that we want to move an edge from the chain~$a_1$ to~$b_1$ in Figure~\ref{f.K33labels}, but without yet specifying how these chains are to be chosen. We first derive an expression for $d_4(G)-d_4(G')$, which we obtain through Corollary~\ref{c.move}, starting with the $x_{i,j}$‑terms. (Like for~\eqref{d4move}, expanding the expressions while keeping $(a_1-1)$ and then grouping terms by sign recovers the $x$‑terms.)
	\begin{align}
		\label{K33xterms}
		x_{4,2} - x_{4,2}' &= \big((a_1-1)-b_1\big)\big((a_2 + a_3)(b_2 + b_3) + (c_2 + c_3)(a_2 + a_3 + b_2 + b_3)  \big) \nonumber \\
		x_{4,3} - x_{4,3}' &= a_2a_3(b_2 + b_3 + c_2 + c_3) - b_2b_3(a_2 + a_3 + c_2 + c_3) \\
		x_{4,4} - x_{4,4}' &= a_2b_3c_3 + a_3b_2c_2 - a_2b_3c_2 - a_3b_2c_3 \nonumber
	\end{align}
	Adding the above equations, with $\Delta = d_4(G) - d_4(G')$, and rearranging the right-hand side yields the following. (An equation corresponding to~\eqref{d4K33_1} was claimed to be nonnegative in~\cite{Wang94}, without any justification and under different conditions on the variables than we will use.)
	\begin{align}
		\begin{split}
			\label{d4K33_1}
			\Delta ={}& \big(\delta_1 - 1\big)\big((a_2 + a_3)(b_2 + b_3) + (c_2 + c_3)(a_2 + a_3 + b_2 + b_3)  \big)\\
			& + \delta_2\big(a_3b_3 + c_3(a_3 + b_3)\big) + \delta_3\big(a_2b_2 + c_2(a_2 + b_2)\big)
		\end{split}\\
		\begin{split}
			\label{d4K33}
			={}& \big(\delta_1 - 1\big)\big(a_2b_3 + a_3b_2 + c_2(a_3 + b_3) + c_3(a_2 + b_2)\big) \\
			&+ \big(\delta_1 + \delta_2 - 1\big)\big(a_3b_3 + c_3(a_3 + b_3)\big) + \big(\delta_1 + \delta_3 - 1 \big) \big( a_2b_2 + c_2(a_2 + b_2)\big) \,.
		\end{split}
	\end{align}
	
	We now fix a chain labeling of~$G$ according to Figure~\ref{f.K33labels}. The labeling should be such that
	\begin{eninline}[label={\textbf{C\arabic*:}},ref={C\arabic*}]
		\item\label{C1} $\delta_1 = a_1-b_1$ is as large as possible,
		\item\label{C2} $a_1$~is as~large as possible under the preceding condition,
		\item\label{C3} $\delta_2$~is as large as possible under the preceding two conditions, and
		\item\label{C4} $b_2$ is as small as possible under the preceding three conditions.
	\end{eninline}
	
	$G'$ is obtained by moving an edge from~$a_1$ to~$b_1$. We need to show that $d_2(G') \leq d_2(G)$, that $d_3(G') < d_3(G)$ and that $\Delta$ above is positive. We will consider four cases. That they are 
	exhaustive will be clear from the following three observations:
	\begin{eninline}[label=(\arabic*)]
		\item $\delta_1\geq 1$,
		\item $\delta_1 + \delta_2 \geq 0$ and $\delta_1 + \delta_3 \geq 0$, by~\ref{C1}, and
		\item $\delta_2\geq \delta_3$, because of~\ref{C3}. 
	\end{eninline}
	Regarding Case~3 and Case~4 below, it follows from Theorem~\ref{t.d2}\ref{tt.d2decrease} that $d_2(G') < d_2(G)$ and from Lemma~\ref{l.d3KBB} that $d_3(G') < d_3(G)$, so for these cases we only need to show that $\Delta > 0$.
	
	\emph{Case 1:} $\delta_1 = 1$. This implies that all pairs of adjacent chains of~$G$ are balanced. We note that $G$ and~$G'$ have the same chain lengths (with the lengths of $a_1$~and~$b_1$ switched) and that $d_2(G) = d_2(G')$.
	Since $G$~itself is imbalanced and no two chains are separated by more than one chain, we deduce that the chain lengths of~$G$ are $\ell$, $\ell +1$ and $\ell +2$, for some $\ell\geq 0$, and that $G$ has a cycle of chains $\ell_1\ell_2\ell_3\ell_4$ such that $(\ell_1,\ell_2,\ell_3,\ell_4) = (\ell +2, \ell+1, \ell, \ell+1)$. The conditions~\ref{C1} through~\ref{C4} then forces $a_1b_1b_2a_2$ to be such a~cycle.
	Inserting these values into~\eqref{d3K33} yields the following equation. For the inequality, we use that by~\ref{C1}, $a_3\geq \ell+1$ (since $a_3$~is adjacent to~$a_1$) and $b_3\leq \ell+1$ (since $b_3$~is adjacent to~$b_2$).
	\begin{equation*}
		d_3(G)-d_3(G') = (\ell +1)a_3 - \ell b_3 \geq (\ell+1)^2 - \ell(\ell+1) = \ell + 1 > 0 \,,
	\end{equation*}
	Inserting the same values into~\eqref{d4K33} yields
	\begin{equation*}
		\begin{split}
			\Delta &= \big(a_3b_3 + c_3(a_3 + b_3)\big) + \delta_3\big((\ell+1)\ell + c_2(2\ell +1)\big)\\
			&\geq \big((\ell+1)b_3 + c_3(\ell + 1 + b_3)\big) + \big((\ell+1) - b_3\big)\big((\ell+1)\ell + c_2(2\ell +1)\big)\,,
		\end{split}
	\end{equation*}
	where the possible values for~$b_3$ are $\ell + 1$ and~$\ell$. If on the one hand $b_3 = \ell +1$, then $\Delta \geq (\ell+1)^2 > 0$. If on the other hand $b_3 = \ell$, then $\Delta \geq c_2 + c_3$. Now, $c_2$ and~$c_3$ cannot both be zero chains, since a zero chain in~$G$ would presuppose $\ell=0$ and we would then have the  \hyperlink{zerocycle}{``zero cycle''} $c_2c_3b_3b_2$. We conclude that $\Delta > 0$.
	
	\emph{Case 2:} $\delta_1 \geq 2$ while $\delta_1 + \delta_2 = 0 $ and $\delta_1 + \delta_3 = 0$. We will show that this case is inconsistent with~\ref{C3}. Since $\delta_1 =  -\delta_2 =  b_2 - a_2$, the chain~$b_2$ has length $a_2 + \delta_1$, so the cycle of chains $(a_1,a_2, b_2,b_1)$ has lengths $(b_1 +\delta_1, a_2, a_2 + \delta_1, b_1)$. Furthermore, since $\delta_1$ is the largest difference between adjacent chains, we have 
	\begin{equation*}
		\left\{
		\begin{array}{@{}l@{}}
			(b_1+\delta_1) - a_2 \leq \delta_1\\
			(a_2+\delta_1) - b_1 \leq \delta_1
		\end{array}\right.
		\implies
		\left\{\begin{array}{@{}l@{}}
			b_1 - a_2 \leq 0\\
			a_2 - b_1 \leq 0
		\end{array}\right.
		\implies\:\, a_2 = b_1 \,.
	\end{equation*}
	Hence, with $\ell$ denoting the length of~$b_1$, the aforementioned cycle $(a_1,a_2,\ab b_2,b_1)$ has lengths $(\ell +\delta_1,\ab \ell,\ab \ell + \delta_1, \ell)$. It is analogously shown that $(a_1,a_3,\ab b_3,b_1) = (\ell +\delta_1, \ell,\ab \ell + \delta_1, \ell)$. This gives the cycle $(b_2,a_2,\ab a_3,b_3)$ lengths $(\ell +\delta_1, \ell,\ab \ell,\ab \ell + \delta_1)$.
	
	Since any 4‑cycle of $K_{3,3}$ can be mapped to any other 4‑cycle, $G$ can be relabeled so that the chains $(a_1, b_1,\ab b_2, a_2)$ have lengths $(\ell +\delta_1, \ell,\ab \ell,\ab \ell + \delta_1)$. Compared to the first labeling, $\delta_1$ and the length of $a_1$ is unchanged, but $\delta_2$ has strictly increased from $-\delta_1$ to $\delta_1$. Case~2 is thus inconsistent with~\ref{C3}. 
	
	\emph{Case 3:} $\delta_1 \geq 2$ while $\delta_1 + \delta_2 > 0$ and $\delta_1 + \delta_3 = 0$.  Just as in Case~2, $(a_1, b_1)$ and $(b_3, a_3)$ have the same lengths. Eliminate $a_3$ and~$b_3$ from~\eqref{d4K33} to obtain
	\begin{align}
		\begin{split}
			\Delta ={}& \big(\delta_1-1\big)\big(a_1a_2 + b_1b_2 + c_2(a_1 + b_1) + c_3(a_2 + b_2)\big)\\
			& +   \big(\delta_1 + \delta_2 - 1\big)\big(a_1b_1 + c_3(a_1 + b_1)\big) - \big(a_2b_2 + c_2(a_2 + b_2)\big)
		\end{split} \nonumber\\
		\begin{split}\label{d4K33case3}
			={}&\big(\delta_1 -1 \big) \big(b_1b_2 + c_3(a_2 + b_2)\big) + \big(\delta_1 + \delta_2 - 1\big)\big(a_1b_1 + c_3(a_1 + b_1)\big)\\
			&+ a_2\big((\delta_1-1)a_1 - b_2\big) + c_2\big((\delta_1-3)(a_1+b_1) +(a_1-a_2) + (a_1-b_2) + 2b_1\big) \,.
		\end{split}
	\end{align}
	We will make repeated use of the inequalities $a_1\geq a_2\geq b_2$. The former follows from~\ref{C1}, since $a_2$~is adjacent to~$a_3$ which has the same length as~$b_1$. For the latter, consider that the isomorphism which swaps the vertices $a$ with~$b$ and $1$~with~$3$ makes $a_2$~and~$b_2$ exchange places, while the lengths of the chains with labels $a_1$~and~$b_1$ remain the same.  Hence, \ref{C3}~implies $a_2\geq b_2$. Consider furthermore the isomorphism of~$K_{3,3}$ which mirrors Figure~\ref{f.K33labels} vertically. It exchanges $(a_2,b_2)$ with $(c_1,c_3)$, while no other chain lengths change, since $a_3 = b_1$. Hence, \ref{C3}~implies $\delta_2\geq c_1-c_3$. Combining the two isomorphisms above, \ref{C3}~implies that $\delta_2 \geq c_3-c_1$. It follows that if $a_2 = b_2$, then $c_1 = c_3$, in which case \ref{C4}~implies that $b_2\leq c_3$. This last fact will be needed for Subcase~3b.
	
	\emph{Subcase 3a:} $\delta_1 \geq 3$. By our assumptions and since $a_1\geq a_2\geq b_2$, all four terms of~\eqref{d4K33case3} are nonnegative. To show that $\Delta>0$, suppose for contradiction that $\Delta=0$. In particular, the third term equals zero, and since it is also bounded below by $a_2(2a_1-b_2) = a_2a_1 + a_2(a_1-b_2)$, where both terms are nonnegative, we have $a_1a_2=0$. This implies $a_2=0$, which in turn implies $b_2=0$, so that $\delta_2=0$. From~\eqref{d4K33case3}, we obtain $\Delta \geq a_1(b_1 + c_3 + c_2)$. This forces~$b_1$ (hence also~$a_3$), $c_2$ and~$c_3$ to be zero chains. This results in the ``zero cycle'' $a_2c_2c_3a_3$. Hence, $\Delta > 0$.
	
	\emph{Subcase 3b:} $\delta_1 = 2$.  Let $\ell$ denote the length of~$b_1$, so that the cycle $(a_1,a_3,\ab b_3,b_1)$ has lengths $(\ell +2, \ell,\ab \ell + 2, \ell)$. Since $b_2$~is adjacent to~$b_3$ and~$b_1$, \ref{C1}~implies that $b_2 = \ell + s$ where $s\in\{0,1,2\}$. Now insert $\delta_1=2$ into~\eqref{d4K33case3} and use $a_2\geq b_2$ (in the form $\delta_2\geq 0$) to obtain
	\begin{align}
		\Delta &\geq \big(b_1b_2 + c_3(a_2+b_2)\big) + \big(a_1b_1 + c_3(a_1 + b_1)\big) + a_2(a_1-b_2) + c_2\big(a_1-a_2-(b_2-b_1)\big) \nonumber\\
		& \geq a_2c_3 + a_1(b_1 + c_3) + a_2(2-s) + c_2(a_1-a_2-s)\,.
		\label{subcase3b} 
	\end{align}
	We consider, in turn, the three possible values of~$s$. Suppose that $s=0$. Then \eqref{subcase3b} implies $\Delta \geq a_1(b_1 + c_3) + a_2 + c_2(a_1-a_2)$. Each of these terms is nonnegative; suppose for contradiction that they are all zero. Then $b_1=0$ (and hence $a_3=0$), $c_3=0$, $a_2=0$ and $c_2=0$, which gives the ``zero cycle'' $a_3c_3c_2a_2$. Hence, $\Delta>0$.
	
	Suppose that $s=1$. Insertion into~\eqref{subcase3b} implies 
	\begin{equation}
		\Delta \geq a_1(b_1 + c_3) + a_2 + c_2((a_1-1)-a_2)\,. \label{s=1}
	\end{equation}
	Since $a_2$ and~$c_2$ are both adjacent to~$b_2$ with length $\ell+1$, they are both bounded above by $a_1=\ell + 2$, in observance of~\ref{C2}. If on the one hand $a_2<a_1$, all three terms of~\eqref{s=1} are nonnegative, and one shows that $\Delta>0$ exactly as for $s=0$ (recalling that $a_1\geq 2$). If on the other hand $a_2=a_1$, then \eqref{s=1}~simplifies to $\Delta \geq a_1(b_1 + c_3) + (a_1 - c_2)$, in which both terms are nonnegative; suppose for contradiction that both are zero. Then $b_1=0$ and $c_3=0$. With $b_1 = 0$, we have a cycle of known lengths, so $a_3=0$, $a_1=2$ (so also $a_2=2$, and $c_2=2$ since the second term is zero) and additionally $b_2= \ell+1 = 1$, so $\delta_2=1$. Now the cycle $(a_2,a_3,c_3,c_2)$ has lengths $(2,0,0,2)$. However, remapping $(a_1,b_1,b_2,a_2)$ to this cycle would give us the same value for~$\delta_1$, the same length for~$a_1$, but a strictly larger $\delta_2$-value, which contradicts~\ref{C3}. Hence, $\Delta>0$.
	
	Suppose that $s=2$. Since $a_2$ and~$c_3$ are adjacent to~$a_3$ with length~$\ell$, they are bounded above by $a_1 = \ell + 2$ by~\ref{C1}. Since also $a_2\geq b_2 = \ell +2$, we have $a_2 = \ell + 2$. Recall that $a_2=b_2$ implies $c_3\geq b_2$, so we also have $c_3=\ell +2$. Lastly, $c_2$ is bounded above by $\ell + 3$ by~\ref{C2}, since adjacent to $b_2=\ell+2$. Hence, by \eqref{subcase3b}, $\Delta \geq a_2c_3 + a_1c_3 -2c_2 \geq 2(\ell+2)^2 - 2(\ell+3) > 0$.
	
	\emph{Case 4:} $\delta_1 \geq 2$ while $\delta_1 + \delta_2 > 0$ and $\delta_1 + \delta_3 > 0$. All terms of~\eqref{d4K33} are nonnegative, so $\Delta\geq 0$. To show that $\Delta > 0$, consider that $\delta_2$ and~$\delta_3$ can be either 
	\begin{eninline}[label=(\arabic*)]
		\item\label{nonneg} both nonnegative,
		\item\label{neg} both negative, or 
		\item\label{nonneg.neg} $\delta_2$~nonnegative but $\delta_3$ negative, recalling that $\delta_2\geq \delta_3$ because of~\ref{C3}.
	\end{eninline}
	
	Suppose~\ref*{nonneg}. From~\eqref{d4K33_1} we obtain that $\Delta \geq (a_2 + a_3)(b_2 + b_3) + (c_2 + c_3)(a_2 + a_3 + b_2 + b_3)$. Since  $a_2a_3b_3b_2$, $a_2a_3c_3c_2$ and $b_2b_3c_3c_2$ are cycles, each of which must contain at least one edge to avoid a ``zero cycle'', at least one of these two terms is positive, and $\Delta>0$.
	Suppose~\ref{neg}. By keeping only part of the first term of~\eqref{d4K33} we have $\Delta \geq a_2b_3 + a_3b_2 + c_2b_3 + c_3b_2$. Since $a_2a_3c_3c_2$ is a~cycle, at least one of these chains is positive, and since furthermore $\delta_2<0$ and $\delta_3<0$ implies that $b_2$ and~$b_3$ are positive, at least one of the four terms is positive, and $\Delta>0$. Finally, suppose~\ref*{nonneg.neg}. Since $\delta_3$ is negative, $b_3$ is~positive. From~\eqref{d4K33}, $\Delta \geq a_2b_3 + c_2b_3 + a_3b_3 + c_3b_3$. Since $a_2c_2c_3a_3$ forms a~cycle, at least one of the four terms is positive. Hence, $\Delta>0$.
\end{proof}

\begin{prop}
	\label{p.onlyK33}
	For \(n\geq 1\), if \(G \in \G_{n,n+3}\) is not a weak \(K_{3,3}\)-subdivision, then there is a bridgeless \(H\in \G_{n,n+3}\) such that \(d_2(H) \leq d_2(G)\), \(d_3(H) < d_3(G)\) and \(d_4(H) < d_4(G)\). Hence, any \(p\)‑optimal \((n,n+3)\)-graph is a weak \(K_{3,3}\)-subdivision.
\end{prop}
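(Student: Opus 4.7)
The plan is to reduce the proposition to the surgeries delivered by Propositions~\ref{p.bridge}, \ref{p.noPi3}, and~\ref{p.K33balanced} by a short case analysis based on the structure of~$G$ and the fact that $\D_3 = \{\Pi_3, K_{3,3}\}$. The key tool for swapping between a graph and a convenient equivalent representative is Proposition~\ref{p.equiv}, which guarantees that equivalent graphs have identical $d_i$-coefficients, so any improvement found for an equivalent $G^\dag$ is automatically an improvement over~$G$.

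If $G$ has a bridge, iteratively applying Proposition~\ref{p.bridge} strictly decreases each of $d_1,d_2,d_3,d_4$ at every step; since $d_1$ counts bridges and is nonnegative, the process terminates at a bridgeless $H \in \G_{n,n+3}$ with $d_i(H) < d_i(G)$ for all $i\in[1\twodots 4]$. Next, if $G$ is bridgeless and is itself a weak $\Pi_3$-subdivision, then because $G$ is not a weak $K_{3,3}$-subdivision its three rung chains are all positive, and Proposition~\ref{p.noPi3} applied directly to~$G$ produces the required~$H$.

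The remaining case is that $G$ is bridgeless and has no weak distillation in~$\D_3$. Theorem~\ref{t.3ec}\ref{tt.3ec2} then yields an equivalent graph $G^\dag \in \G_{n,n+3}$ with a weak distillation $D \in \D_3$, and Proposition~\ref{p.equiv} gives $d_i(G^\dag) = d_i(G)$ for all~$i$. If $G^\dag$ is a weak $K_{3,3}$-subdivision, Theorem~\ref{t.3ec}\ref{tt.3ec3} forces $G^\dag$ to be imbalanced relative to~$K_{3,3}$, so Proposition~\ref{p.K33balanced} applied to~$G^\dag$ delivers an~$H$ satisfying $d_2(H) \leq d_2(G^\dag) = d_2(G)$, $d_3(H) < d_3(G)$, and $d_4(H) < d_4(G)$. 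Otherwise $G^\dag$ is not a weak $K_{3,3}$-subdivision, so necessarily $D = \Pi_3$ and $G^\dag$ has all positive rung chains (as noted in the proof of Proposition~\ref{p.noPi3}); then Proposition~\ref{p.noPi3} applied to~$G^\dag$ yields the desired~$H$.

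The main subtlety is the dichotomy in the last paragraph, since \emph{a~priori} $G^\dag$ might admit both $\Pi_3$ and $K_{3,3}$ as weak distillations (cf.~the remark after Theorem~\ref{t.3ec}); but either branch is adequate---whenever $G^\dag$ is a weak $K_{3,3}$-subdivision we invoke Proposition~\ref{p.K33balanced}, otherwise Proposition~\ref{p.noPi3}. The $H$ produced in every case is bridgeless, so $d_1(H)=0$ and the sufficient condition~\eqref{suff} forces $G$ to be strictly less reliable than~$H$ for every $p\in(0,1)$. The final sentence of the proposition follows at once.
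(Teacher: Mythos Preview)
Your proof is correct and follows essentially the same approach as the paper's: the same three-way case split (bridge, weak $\Pi_3$-subdivision, neither) and the same invocation of Propositions~\ref{p.bridge}, \ref{p.noPi3}, and~\ref{p.K33balanced} together with Theorem~\ref{t.3ec} and Proposition~\ref{p.equiv}. Your version is in fact slightly more careful than the paper's terse proof---you iterate Proposition~\ref{p.bridge} to guarantee bridgelessness of~$H$, and in the final case you explicitly branch on whether $G^\dag$ is a weak $K_{3,3}$-subdivision (applying Proposition~\ref{p.K33balanced}) or only a weak $\Pi_3$-subdivision (applying Proposition~\ref{p.noPi3}), whereas the paper simply asserts that $G^\dag$ is an imbalanced weak $K_{3,3}$-subdivision.
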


\begin{proof}
	If $G$ has a bridge, the statement follows from Proposition~\ref{p.bridge}. Suppose that $G$ is bridgeless. If $G$ is a weak $\Pi_3$-subdivision, then the conclusion follows from Proposition~\ref{p.noPi3}. Suppose that $G$ is not a weak $\Pi_3$-subdivision. By Theorem~\ref{t.3ec}\ref{tt.3ec2}\ref{tt.3ec3}, there is an equivalent graph~$G^\dag$ which is an imbalanced weak $K_{3,3}$-subdivision. The conclusion then follows from Proposition~\ref{p.K33balanced}.
\end{proof}

\subsection{Correcting the \texorpdfstring{$K_{3,3}$}{K33}-theorem}
\label{ss.correction}

Before Theorem~\ref{t.k3}, which identifies the uniquely optimal $(k=3)$-graphs, we point out three ways in which our result gives a different picture than the hitherto accepted result by Wang~\cite{Wang94}.
\begin{itemize}[partopsep=0pt]
	\item Wang's theorem is wrong regarding the graphs described by the case $r=-4$ below. Hence, every ninth graph of the sequence, starting from the eleventh graph (see Figure~\ref{f.firstk3}), is a UMRG which has not been described before.
	
	\item The description of the sequence by perfect matchings of~$K_{3,3}$ fails (see Section~\ref{ss.k=3background}). A~simple characterization of the graph sequence now seems elusive---we need particular rules for different values of~$r$.
	
	\item Moreover, it is not even possible to generate the graphs by successive vertex insertion in~$K_{3,3}$ (consider the 11th and 12th graphs in Figure~\ref{f.firstk3}). 
	Illustrations such as~\cite[Fig.~2]{GilMyr}, \cite[Fig.~1]{Llagostera} and \cite[Fig.~2]{RomeroGSconj}, which specify an ordering of the edges of $K_{3,3}$, therefore have to be abandoned.
\end{itemize}

\begin{figure}[b!]
	\vspace{2mm}
	\centering
	\includegraphics{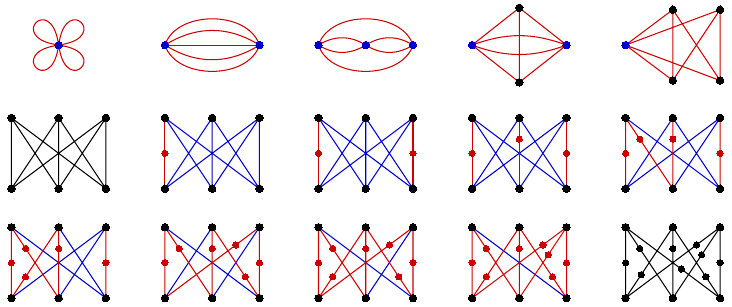}
	\caption{The first fifteen uniquely optimal $(n,n+3)$-graphs. The pattern continues and cycles every nine graphs. Red chains are one edge longer than blue ones; a blue vertex indicates the presence of one or more zero chains relative to~$K_{3,3}$.}
	\label{f.firstk3}
\end{figure}

\begin{thm}
	\label{t.k3}
	For each \(n\geq 1\) and \(m = n + 3\), there is a uniquely optimal graph in~\(\G_{n,m}\), which also uniquely maximizes the number of spanning trees. This graph is a balanced weak subdivision of~\(K_{3,3}\) (``weak'' is not needed when \(n\geq 6\)) which is specified up to isomorphism by the following set of additional conditions, where \(m \equiv r \pmod{9}\) and \(r \in [-4 \twodots 4]\).
	\begin{itemize}
		\item If \(r\in \{0,\pm1\}\), no further condition is needed.
		\item If \(r\in\{\pm2,\pm3\}\), the two or three longer (for positive~\(r\)) or shorter (for negative~\(r\)) chains correspond to a matching in $K_{3,3}$.
		\item If \(r=4\), the four longer chains correspond to a~3‑path and a~1‑path, disjoint, in~\(K_{3,3}\).
		\item If \(r=-4\), the four shorter chains correspond to two disjoint 2‑paths in~\(K_{3,3}\).
	\end{itemize}
\end{thm}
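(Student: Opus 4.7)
The overall strategy is to chain the three reduction results of Sections~\ref{ss.Pi3}--\ref{ss.K33} together and then resolve the residual ambiguity at $r = \pm 4$ via a direct $d_4$-computation. First, Proposition~\ref{p.bridge} eliminates bridged graphs, Proposition~\ref{p.onlyK33} restricts to weak $K_{3,3}$-subdivisions, and Proposition~\ref{p.K33balanced} further restricts to \emph{balanced} weak $K_{3,3}$-subdivisions, which all share the common minimum value of $d_2$ by Theorem~\ref{t.d2}\ref{tt.d2iff}. The problem thereby reduces to identifying, among the balanced weak $K_{3,3}$-subdivisions of size~$m$, the unique simultaneous minimizer of $d_3$ and $d_4$.

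Since every 3-bond of $K_{3,3}$ is trivial, Proposition~\ref{p.d3min} (part~\ref{pp.d3min1} unconditionally, and part~\ref{pp.d3min2} for $m \geq 2k = 6$) shows that $d_3$ is minimized precisely when the $\piv$-multiset of $K_{3,3}$ relative to the chain weighting is balanced. A case analysis on $r \in [-4 \twodots 4]$ then converts the $\piv$-condition into a combinatorial condition on the set~$S$ of special-weight edges of $K_{3,3}$: for $r \in \{0, \pm 1\}$ the graph is determined by the vertex- or edge-transitivity of $K_{3,3}$; for $r = \pm 2$ or $r = \pm 3$, balanced $\piv$-values force~$S$ to be a matching (of size 2 or 3), which pins the graph down up to isomorphism. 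For $r = \pm 4$, however, the balanced $\piv$-multiset $\mset{1,1,1,1,2,2}$ (signed according to~$r$) is realized by exactly two nonisomorphic choices of~$S$: a 3-path together with a disjoint edge (arrangement~$A$), and two vertex-disjoint 2-paths (arrangement~$B$).

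To distinguish arrangements $A$ and $B$ at $r = \pm 4$, I would compute $d_4$ directly via Corollary~\ref{c.bonds}. All 4-bonds of $K_{3,3}$ are trivial (one per edge, isolating that edge), and, writing $p$ for the special weight and $q$ for the ordinary one, a straightforward expansion gives
\[
	b_4^A - b_4^B \;=\; q^4 - 2pq^3 + 2p^3 q - p^4 \;=\; -(p-q)^3(p+q).
\]
When $r = 4$ we have $p = q+1$, so $b_4^A - b_4^B = -(2q+1) < 0$ and arrangement~$A$ wins; when $r = -4$ we have $p = q-1$, so $b_4^A - b_4^B = 2q-1 > 0$ and arrangement~$B$ wins. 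A parallel bookkeeping of $d_{4,3}$ and $d_{4,2}$ via Corollary~\ref{c.bonds} confirms that these two contributions either agree on~$A$ and~$B$ or preserve the sign of the overall difference. Combined with the earlier reductions, this yields unique $d_4$-minimization, which in turn gives both unique optimality and unique maximization of the number of spanning trees, since $c_4 = \binom{m}{4} - d_4$.

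The principal obstacle is the $r = \pm 4$ case: the $\piv$-analysis of $d_3$ is blind to the $A$-versus-$B$ distinction, and the asymmetric sign flip in the $d_4$ comparison---originating from the cube $(p-q)^3$---is precisely the pitfall that Wang's 1994 argument missed. A secondary technicality is the handling of very small~$m$, where $q = 0$ allows zero chains to proliferate and alter the weak distillation structure, or where Proposition~\ref{p.d3min}\ref{pp.d3min2} is not yet applicable; these finitely many cases admit direct inspection.
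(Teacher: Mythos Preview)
Your reduction chain (Propositions~\ref{p.bridge}, \ref{p.onlyK33}, \ref{p.K33balanced}, Theorem~\ref{t.d2}\ref{tt.d2iff}) is exactly right and matches the paper, and your treatment of the $r=\pm4$ tiebreak is correct: since your two arrangements $A$ and $B$ share the $\piv$-multiset $\mset{2,2,1,1,1,1}$, the quantities $d_{4,2}$ and $d_{4,3}$ indeed agree, and your factorization $b_4^A-b_4^B=-(p-q)^3(p+q)$ cleanly explains the sign flip between $r=4$ and $r=-4$.

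There is, however, a genuine gap. You locate the $d_3$-minimizer(s) within $\B_m(K_{3,3})$ and, at $r=\pm4$, select among them by $d_4$; but you never verify that the resulting graph minimizes $d_4$ over \emph{all} of $\B_m(K_{3,3})$, including the arrangements with imbalanced $\piv$-values. Without this, the sufficient condition~\eqref{suff} cannot be invoked: if some arrangement with larger $d_3$ happened to have smaller $d_4$, it would beat your candidate for small $p$ (recall $c_4=\binom{m}{4}-d_4$ is the spanning-tree count), and no uniquely optimal graph would exist. Concretely, for $r=\pm2$ you must still compare the matching arrangement against the adjacent-pair arrangement on $d_4$; for $r=\pm3$, against three further arrangements; and for $r=\pm4$, your winner among $A,B$ must still beat the three arrangements whose $\piv$-multiset is not $\mset{2,2,1,1,1,1}$. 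The paper closes this gap by introducing an edge-based counterpart $\pie$ to $\piv$, packaging $d_4$ as a constant plus $\Phi_{4,3}+\Phi_{4,4}$ (functions of the $\piv$- and $\pie$-multisets), and tabulating this sum for \emph{every} arrangement at each $r$. Only after seeing that the unique global $d_4$-minimizer is in each case also a $d_3$-minimizer does the conclusion follow. Your argument needs that global $d_4$ comparison, not merely the one restricted to $d_3$-minimizers.
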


\begin{proof}
	We fix $n\geq 2$ and hence $m\geq 5$; the degenerate first case can be separately verified. $\B_m(K_{3,3})$ contains the balanced weak $K_{3,3}$-subdivisions.
	Recall that $\abs{r}$ is the number of chains which are longer or shorter than the standard length~$q$ as defined by \eqref{euclid}. If $r=0$, all chains have the same length, so $\B_m(K_{3,3})$ contains only a single, perfectly balanced graph. There is also only one graph when $r=\pm1$, since $K_{3,3}$ is edge-transitive.
	
	Consider a balanced weighting of~$K_{3,3}$ with the implied $G\in\B_m(K_{3,3})$. Suppose that $r\in\{\pm2,\ab \pm3,\ab \pm4\}$. In general, different placements of the heavier (lighter) weights in~$K_{3,3}$ will yield nonisomorphic weak subdivisions.
	A little thought reveals that the following alternatives, illustrated in Figure~\ref{f.options}, exhaust~$\B_m(K_{3,3})$. (If $m=5$, then option~\ref*{m5undef} is not well-defined because of a \hyperlink{zerocycle}{zero cycle}.) 
	
	\begin{figure}[hb!]
		\centering
		\includegraphics{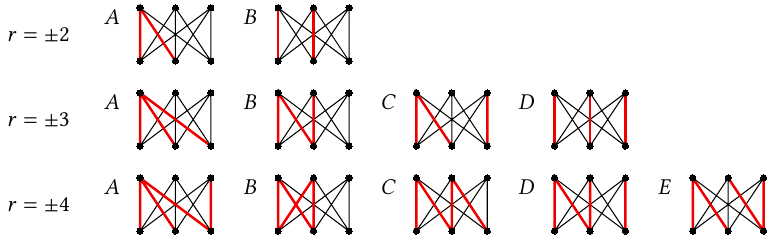}
		\caption{Thick red edges signify chains which are all one edge longer or shorter than the others. The different arrangements are nonisomorphic balanced $K_{3,3}$-subdivisions (when all chains are positive).}
		\label{f.options}
	\end{figure}
	
	\begin{enumerate}[label={If $r=\pm\arabic*$:}]
		\setlist[eninline,1]{label=$(\Alph*)$, ref=$\Alph*$}
		\setcounter{enumi}{1}
		\item The two heavier (lighter) edges are either
		\begin{eninline}
			\item  adjacent, or
			\item  nonadjacent.
		\end{eninline}
		\item The three heavier (lighter) edges form either
		\begin{eninline}
			\item  a~3‑star,
			\item  a~3‑path,
			\item  a 2-path and a~1‑path, or
			\item  a~perfect matching.
		\end{eninline}
		\item The four edges as above form either
		\begin{eninline}
			\item  a~3‑star with an extra edge attached,
			\item  a~4-cycle,\label{m5undef}
			\item  a~4‑path,
			\item  a~3‑path and a~1‑path, or
			\item  two 2‑paths.
		\end{eninline}
	\end{enumerate}
	
	The main work is to compare the above alternatives with regard to the number of 4‑dis\-connections, which we, as usual, group by the size of the smallest bond. Since the number of $(4,2)$-disconnections is the same for all $G\in\B_m(K_{3,3})$, we have
	\begin{equation}
		\label{d4k3}
		d_4(G) = d_{4,2} + d_{4,3}(G) + b_4(G)\,, \qquad\mathrlap{G\in \B_m(K_{3,3})\,,}
	\end{equation}
	where $d_{4,2}$ is a constant (and $b_4(G)$ equals $d_{4,4}(G)$ by definition).
	
	We now expand upon the notation of Definition~\ref{d.isolation}.
	\begin{itemize}
		\item Recall that $\piv(u)$ counts the number of edges incident to $u\in K_{3,3}$ which are either heavier or lighter than~$q$, with sign.
		\item Let $\dv_{4,3}(u)$ denote the number of $(4,3)$-disconnections of~$G$ which isolate~$u$ in~$K_{3,3}$. 
		\item Let $\pie(e)$ denote the number of edges adjacent to $e\in K_{3,3}$ which are either heavier or lighter than $q$, with sign.
		\item Let $b_4^\e(e)$ denote the number of 4‑bonds of~$G$ which isolate~$e$ in~$K_{3,3}$.
	\end{itemize}

	Label the vertices of~$K_{3,3}$ by $(u_i)_{i=1}^6$. Since $G$ has only trivial 3‑bonds, we have
	\begin{equation}
		\label{dv43}
		d_{4,3}(G) = \sum_{\mathclap{i\in [1\twodots 6]}} \dv_{4,3}(u_i) \,.
	\end{equation} 
	Given a vertex $u\in K_{3,3}$, a disconnection of~$G$ counted by $\dv_{4,3}(u)$ contains a 3‑bond isolating~$u$ (counted by~$b_3^\rv(u)$, see Table~\ref{tb.3bonds}) and a fourth edge from any chain not incident to~$u$. There are $9q+r$ edges in~$G$, and the total length of the chains incident to~$u$ is $3q + \piv(u)$. Hence, the number of possible choices for the fourth edge is $6q +r -\piv(u)$. It follows that the possible values of $\dv_{4,3}(u)$ are as given by Table~\ref{tb.dftv}.
	
	\begin{table}[b!]
		\vspace{-1.5mm}
		\renewcommand{\arraystretch}{1.05}
		\caption{Functions for counting and comparing $d_{4,3}(G)$, the number of 4‑disconnections in which the smallest bond is a~3‑bond, in balanced weak subdivisions of~$K_{3,3}$.}
		\label{tb.dftv}
		\begin{tabularx}{\textwidth}{L \Rr \LlX L}
			\toprule
			\bm{\piv(u)} && \bm{\mathrlap{\dv_{4,3}(u)}} & \bm{\varphi_{4,3}\big(\piv(u)\big)}\\
			\midrule
			\phantom{\pm}0 & \phantom{(q\pm1)^0}q^3(6q+r) & {}= 6q^4 + rq^3 & 0\\
			\cline{2-3}
			\pm1 & (q\pm1)^1q^2(6q+r\mp1) & {}= 6q^4 + (r\pm5)q^3 \phantom{0} + (\abs{r}-1)q^2 & 0 \Tstrutt\\
			\cline{2-3}
			\pm2 & (q\pm1)^2q^1(6q+r\mp2) & {}= 6q^4 + (r\pm10)q^3 + \bigl(2(\abs{r}-1)\:{+} & \vspace{-1.5pt}\Tstrutt\\
			&& \phantom{{}={}} 4\bigr)q^2 + (r\mp2)q & 4q^2 + (r\mp2)q \\
			\cline{2-3}
			\pm3 & \phantom{q^0}(q\pm1)^3(6q+r\mp3) &{}= 6q^4 + (r\pm15)q^3 + \bigl(3(\abs{r}-1)\:{+} & \vspace{-1.5pt}\Tstrutt\\
			&& \phantom{{}={}} 12 \bigr)q^2 + (3r\mp3)q + (\abs{r}-3) & 12q^2 + (3r\mp3)q + (\abs{r}-3) \\
			\bottomrule
		\end{tabularx}
	\end{table}
	
	By considering~\eqref{dv43} and Table~\ref{tb.dftv}, we note that $d_{4,3}(G)$ is a quartic polynomial in~$q$ with leading coefficient~36. Furthermore, we note that the cubic coefficient of each term in~\eqref{dv43} has a fixed part equaling~$r$ and a dependent part which equals $5\piv(u_i)$. Summing the six terms and using that the $\piv$-values sum to~$2r$ by~\eqref{pi2r} gives us a cubic coefficient of $d_{4,3}(G)$ which equals~$16r$.
	
	Regarding the quadratic coefficient of $d_{4,3}(G)$, consider that every incidence of a chain of length $q+1$ or $q-1$ to some vertex~$u$ is associated with a unit increase in the absolute value of~$\piv(u)$, and hence, as can be read from Table~\ref{tb.dftv}, adds at least $(\abs{r}-1)q^2$ to the sum of~\eqref{dv43}. Recalling that the total number of such incidences is fixed and equals~$\abs{2r}$, we define $\varphi_{4,3}(\piv(u))$ to be the quadratic polynomial in~$q$ with leading coefficient equal to the part of the quadratic coefficient of $\dv_{4,3}(u)$ which exceeds $\abs{\piv(u)}(\abs{r}-1)$ (see Table~\ref{tb.dftv}), and with linear and constant terms equal to the linear and constant terms of $\dv_{4,3}(u)$.
	
	Applying the above considerations to~\eqref{dv43} yields
	\begin{equation}
		\label{dv43final}
		d_{4,3}(G) = 36q^4 + 16rq^3 + \abs{2r}(\abs{r}-1)q^2 + \sum_{\mathclap{i\in[1 \twodots 6]}}\varphi_{4,3}\big(\piv(u_i)\big)\,, \quad G\in\B_m(K_{3,3}\mathrlap{)\,,}
	\end{equation}
	where the terms preceding the ``$\varphi$-sum''' are constant, but the $\piv$-values depend upon~$G$.
	
	We now turn our attention to~$b_4(G)$. (The reasoning is analogous to the proof of Proposition~\ref{p.d3min} from \eqref{b3G} to~\eqref{phisum}.) Label the edges of~$K_{3,3}$ by $(e_i)_{i=1}^9$, while separately keeping their weights.  Since $G$ only has trivial 4‑bonds, we have
	\begin{equation}
		\label{b4e}
		b_4(G) = \sum_{\mathclap{i\in [1\twodots 9]}} b_4^\e(e_i) \,.
	\end{equation} 

	Consider the possible values of~$b_4^\e(e_i)$, given in Table~\ref{tb.bfe}. Clearly, \eqref{b4e} is a quartic polynomial in~$q$ with the leading coefficient~9.
	Since every edge in~$K_{3,3}$ is adjacent to four other edges, and there are $\abs{r}$~edges which can be counted by~$\pie(\ph)$, we have
	\begin{equation}
		\label{pi4r}
		\sum_{\mathclap{i\in[1 \twodots 9]}} \pie(e_i) = 4r\,. 
	\end{equation}
	And since the cubic coefficient of~$b_4^\e(e_i)$ always equals~$\pie(e_i)$,  the cubic coefficient of~\eqref{b4e} is~$4r$.
	We define $\varphi_{4,4}(\pie)$ to be the quadratic, linear and constant terms of~$b_4^\e(\ph)$, regarded as a function of~$\pie$. The possible values are given in Table~\ref{tb.bfe}. Applying our considerations to~\eqref{b4e} yields
	\begin{equation}
		\label{b4}
		b_4(G) = 9q^4 + 4rq^3 + \sum_{i\in[1\twodots 9]} \varphi_{4,4}(\pie_i) \quad G\mathrlap{{}\in \B_m(K_{3,3})\,,}
	\end{equation}
	where only $\pie_i= \pie(e_i)$ depends upon~$G$.
	
	\begin{table}[t!]
		\caption{Functions for counting and comparing the number of trivial 4‑bonds in balanced weak subdivisions of a given cubic, 3‑edge-connected graph.}
		\label{tb.bfe}
		\renewcommand{\arraystretch}{1}
		\begin{tabularx}{\textwidth}{C \LX X \Lr \Ll X L}
			\toprule
			\quad &\bm{\pie(e)} &&& \bm{\mathrlap{b_4^\e(e)}} && \bm{\varphi_{4,4}(\pie)} \\
			\midrule
			&\phantom{\pm}0 && \phantom{(q\pm1)^2}q^4 &{}= q^4 && 0 \\
			&\pm1 && (q\pm1)^1q^3 &{}= q^4 \pm q^3 && 0 \\
			&\pm2 && (q\pm1)^2q^2 &{}= q^4 \pm 2q^3 + q^2 && q^2\\
			&\pm3 && (q\pm1)^3q &{}= q^4 \pm 3q^3 + 3q^2 \pm q && 3q^2 \pm q \\
			&\pm4 && (q\pm1)^4 &{}= q^4 \pm 4q^3 + 6q^2 \pm 4q + 1 && 6q^2 \pm 4q + 1 \\
			\bottomrule
		\end{tabularx}
	\end{table}
	
	Substitute \eqref{dv43final} and~\eqref{b4} into~\eqref{d4k3} to obtain
	\begin{equation}
		d_4(G)  =  K  +  \smashoperator{\sum_{i\in[1 \twodots 6]}}\varphi_{4,3}(\piv_i)  +  \smashoperator{\sum_{j\in[1\twodots 9]}} \varphi_{4,4}(\pie_j) = K + \Phi_{4,3}(G) + \Phi_{4,4}(G) \,, \quad G\in \B_m(K_{3,3})\,,
	\end{equation}
	where $K$ is a constant and $\Phi_{4,3}(\ph)$ and~$\Phi_{4,4}(\ph)$ are defined by the preceding respective sums. Hence, $d_4(\ph)$ is minimized in~$\B_m(K_{3,3})$ if and only if the sum $\Phi_{4,3} + \Phi_{4,4}$, which we denote by~$\Phi_4$, is minimized. In Table~\ref{tb.options}, this sum is calculated from the multisets $\mset{\piv_i}$ and~$\mset{\pie_j}$ by summing the values of $\varphi_{4,3}$ and~$\varphi_{4,4}$ according to Table~\ref{tb.dftv} and~\ref{tb.bfe}. (The multisets are obtained by inspecting the vertices and edges of the graphs in Figure~\ref{f.options}.)
	
	\begin{table}[b!]
		\caption{For each possible graph $G\in \B_m(K_{3,3})$: The $\piv$- and $\pie$‑multisets (superscripts for multiplicity), the resulting $\Phi$-values, and whether $G$ minimizes~$d_4$ (by minimizing~$\Phi_4$) and minimizes~$d_3$ (by balanced $\piv$-values).}
		\label{tb.options}
		\setlength{\tabcolsep}{4pt}
		\begin{tabularx}{\textwidth}{L C L L \LX L L c c}			
			\toprule
			\bm{r} & \bm{G} & \bm{{\phantom{\pm}\mset{\piv_i}_{i=1}^6}} &  \bm{{\phantom{\pm}\mset{\pie_i}_{i=1}^9}} & \bm{\Phi_{4,3}}  & \bm{\Phi_{4,4}} & \bm{\Phi_4} & $\bm{d_4}$ & $\bm{d_3\!}$ \\
			\midrule
			\pm 2 
			& A  & \pm\mset{2^1 1^2 0^3} & \pm\mset{2^1 1^6 0^2} 
			& 4q^2 & q^2 & 5q^2 & \xmark & \xmark \Tstrut\\
			& B & \pm\mset{1^4 0^2} & \pm\mset{2^2 1^4 0^3} 
			& 0 & 2q^2 & 2q^2 & \cmark & \cmark \\
			\midrule[0pt]
			\pm 3 
			& A & \pm\mset{3^1 1^3 0^2} & \pm \mset{2^3 1^6}
			& 12q^2 \pm 6q & 3q^2 
			& 15q^2 \pm 6q & \xmark & \xmark \\
			& B & \pm\mset{2^2 1^2 0^2} & \pm\mset{2^4 1^4 0^1}
			& 2(4q^2 \pm q) &  4q^2 
			& 12q^2\pm2q & \xmark & \xmark \\
			& C & \pm\mset{2^1 1^4 0^1} & \pm\mset{3^1 2^2 1^5 0^1}
			& 4q^2 \pm q & (3q^2 \pm q) + 2q^2 
			& 9q^2 \pm 2q & \xmark & \xmark \\
			& D &  \pm\mset{1^6} &  \pm\mset{2^6 0^3}
			& 0 & 6q^2 & 6q^2 & \cmark & \cmark \\
			\midrule[0pt]
			\pm 4 
			& A & \pm\mset{3^1 2^1 1^3 0^1} 
			& \pm\mset{3^1 2^5 1^3} & 
			\begin{tabular}{L}
				(12q^2 {\pm} 9q {+} 1) \vspace{-4pt}\\
				+(4q^2 {\pm} 2q)
			\end{tabular} 
			&  (3q^2\pm q)+ 5q^2
			& 24q^2 {\pm} 12q {+} 1 & \xmark & \xmark \\
			& B & \pm\mset{2^4 0^2} & \pm\mset{2^8 0^1} 
			& 4(4q^2 {\pm} 2q) & 8q^2
			& 24q^2\pm 8q & \xmark & \xmark \\
			& C & \pm\mset{2^3 1^2 0^1} & \pm\mset{3^2 2^3 1^4} 
			& 3(4q^2{\pm}2q) & 2(3q^2\pm q)+3q^2
			& 21q^2 \pm 8q & \xmark & \xmark \\
			& D & \pm\mset{2^2 1^4} & \pm\mset{3^2 2^4 1^2 0^1} 
			& 2(4q^2{\pm}2q) &  2(3q^2\pm q)+4q^2
			& 18q^2 \pm 6q & \clap{\cmark\kern-0.1em/\xmark} & \cmark \\
			& E & \pm\mset{2^2 1^4} & \pm\mset{4^1 2^4 1^4} 
			& 2(4q^2{\pm}2q) & (6q^2 {\pm} 4q{+}1){+}4q^2
			& 18q^2 \pm 8q {+} 1 & \clap{\xmark/\cmark} & \cmark \\
			\bottomrule
		\end{tabularx}
	\end{table}
	
	Furthermore, $d_3(\ph)$ is minimized over $\B_m(K_{3,3})$ if and only if $K_{3,3}$ has balanced $\piv$-values. This is by Proposition~\ref{p.d3min}, except for the \emph{only~if} direction when $m=5$, which is not strictly needed, but can easily by verified for the sake of Table~\ref{tb.options}.
	
	We find in Table~\ref{tb.options} that, with $r$~given, there is exactly one graph $G\in\B_m(K_{3,3})$ which minimizes $d_4(\ph)$, and that this graph also minimizes $d_3(\ph)$. By Proposition~\ref{p.K33balanced} and \ref{p.onlyK33} we conclude that $G$ minimizes $d_2(\ph)$, minimizes $d_3(\ph)$ and uniquely minimizes $d_4(\ph)$ in the entire set~$\G_{n,m}$. Hence, this graph, as described by Theorem~\ref{t.k3}, is uniquely optimal.
\end{proof}

\section{Results and conjectures for \texorpdfstring{$\MakeLowercase{k}=4$}{k=4} and \texorpdfstring{$\MakeLowercase{k}=5$}{k=5}}
\label{s.k=4,5}
The methods of this paper can be used to obtain partial solutions to the reliability \hyperlink{prb}{problem} for $k=4$ and $k=5$, and hopefully for even larger $k$. We do~not aim to exhaust the possibilities here, and it will be clear that already the cases $k=4,5$ present new challenges.

\subsection{Intriguing \texorpdfstring{$p$}{p}-dependencies for Wagner subdivisions (\texorpdfstring{$k=4$}{k=4})}
\label{ss.k=4}
In a long-standing conjecture by Ath and Sobel~\cite{AthSobel}, a particular sequence of successive vertex insertions into the Wagner graph (Figure~\ref{f.wagner}) was ventured to yield uniformly optimal graphs (as is the case for~$K_4$ and as was believed to be the case for~$K_{3,3}$). While Romero~\cite{Wagner} demonstrated the Wagner graph itself to be uniquely optimal (this is clear from the~proof) among simple graphs, Romero and Safe~\cite{RomeroSafe} subsequently revealed that Ath and Sobel's conjecture fails for every twelfth subdivision, where there in~fact do~not exist uniquely optimal graphs. By the end of this section, we will be ready to state a new conjecture which gives a rather different picture of the $(k=4)$-case.

\begin{figure}[htb!]
	\centering
	\begin{tikzpicture}
		\def\r{0.8}
		\foreach \a [count=\j, remember=\j as \jj (initially 1)] in {22.5,67.5,...,382.5}{
			\draw({\r*cos(\a)},{\r*sin(\a)}) node[vx](\j){} node[spv]{} -- (\jj.center);}
		\draw[Blue3, very thick] (1)--(5) (2)--(6) (3)--(7) (4)--(8);
		\node[inner xsep=\inxsep, inner ysep=\inysep] at (-1.7*\r,0.75*\r) {\lrg{$W$}};
	\end{tikzpicture}
	\caption{All known $p$‑optimal $(n,n+4)$-graphs are weak subdivisions of the Wagner graph~$W$, which is the 4-Möbius ladder. The thick blue edges are \emph{rungs}, the others are \emph{rails}.}
	\label{f.wagner}
\end{figure}
	
The main result of~\cite{RomeroSafe} is reformulated and extended to multigraphs in Theorem~\ref{t.RomSafe} below, with a supplementary proof. Theorem~\ref{t.Wlargem} then gives a new infinite set of $m$‑values for which there is no uniquely optimal $(m-4,m)$-graph. While the two theorems have the same structure, there are intriguing differences between them which indicate different mechanisms at play. While the former theorem gives comparatively small values of~$m$ for which there is no uniquely optimal graph, the latter theorem strongly suggests that there can be only finitely many uniquely optimal $(m-4,m)$-graphs.

\begin{thm}[Romero and Safe~\cite{RomeroSafe}]
	\label{t.RomSafe}
	Consider \(\G_{12q-8,12q-4}\), where \(q\geq 2\).
	\begin{enumenun}
		\item\label{tt.RSlargep} There is a unique graph~\(G\) which is \(p\)‑optimal for sufficiently large~\(p\). This graph is the balanced \(W\)‑subdivision in which the four shorter chains correspond to a perfect matching of rails.
		
		\item\label{tt.RSsmallp} A rearrangement of the rail chains of~\(G\) gives a different balanced subdivision of~\(W\) with more spanning trees, which is therefore strictly more reliable than~\(G\) for sufficiently small~\(p\). Hence, there is no uniquely optimal graph in \(\G_{12q-8,12q-4}\).
	\end{enumenun}
\end{thm}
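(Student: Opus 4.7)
The plan is to mirror the strategy of Sections~\ref{s.k=2} and~\ref{s.k=3}, adapted to $k=4$ and the arithmetic $m=12q-4$, which in the notation of~\eqref{euclid} means $r=-4$ relative to the $3k=12$ edges of any $D\in\D_4$. First I would narrow the candidates for $G$ by the standard reductions: Proposition~\ref{p.bridge} eliminates bridged graphs, Theorem~\ref{t.3ec}\ref{tt.3ec2} reduces (up to equivalence, and hence up to reliability by Proposition~\ref{p.equiv}) to weak subdivisions of graphs in~$\D_4$, and Theorem~\ref{t.d2} further restricts to \emph{balanced} such weak subdivisions, since any other graph is strictly beaten at the $d_2$ level. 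Because $m\geq 20>3k$, Theorem~\ref{t.d3min2} applies and the lex‑minimization passes strictly to those balanced weak subdivisions whose distillation~$D$ contains only trivial 3‑bonds and has balanced $\piv$‑values. With $r=-4$ and $\abs{V(D)}=2k=8$, the only balanced $\piv$‑multiset summing to $2r=-8$ (by~\eqref{pi2r}) is $\mset{-1,-1,-1,-1,-1,-1,-1,-1}$, forcing each vertex of~$D$ to be incident to exactly one shorter chain. Equivalently, the four chains of length~$q-1$ form a perfect matching of~$D$.

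Next I would identify the graphs in~$\D_4$ that admit only trivial 3‑bonds by a direct inspection of Figure~\ref{f.Dgraphs}; the Wagner graph $W=M_4$ is one of them (cf.~the opening of the proof of Proposition~\ref{p.d3min}\ref{pp.d3min1}). Within any such $D$, every perfect-matching placement of the shorter chains gives identical $d_2$ and $d_3$, so uniqueness must be established at the $d_4$ level. Following the bookkeeping template of the proof of Theorem~\ref{t.k3}, I would split $d_4(G)=d_{4,2}(G)+d_{4,3}(G)+b_4(G)$, apply Corollary~\ref{c.bonds} to express $b_4(G)$ as a sum of products of weights over the (necessarily trivial) 4‑bonds of~$D$, and analogously expand $d_{4,3}(G)$ via per-vertex contributions $\dv_{4,3}(u_i)$. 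The claim then reduces to a finite case analysis over pairs (qualifying distillation, perfect matching therein), which should show that $W$ with the shorter chains placed on a perfect matching of its rail cycle uniquely minimizes $d_4$. Combined with the previous steps, this yields part~\ref{tt.RSlargep} via the dominant-term behavior of $U(G,p)$ as $p\to 1$.

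For part~\ref{tt.RSsmallp}, I would exhibit a specific balanced $W$‑subdivision $G'$ obtained from $G$ by rearranging the four shorter chains among the eight rails (for instance, placing them on four consecutive rails of the rail cycle). Such a $G'$ breaks the $\piv$‑balance and hence has strictly larger $d_3$, but may have strictly more spanning trees, and it suffices to show $t(G')>t(G)$. Proposition~\ref{p.largesmallp}\ref{pp.spanningtrees} then delivers $R(G',p)>R(G,p)$ for all sufficiently small~$p$. Spanning-tree counts of balanced weak subdivisions with positive chain lengths can be reduced, by the weighted matrix-tree theorem on~$W$, to polynomial identities in~$q$ verifiable by direct computation.

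The main obstacle is the combined case analysis driving the $d_4$‑computation in part~\ref{tt.RSlargep} and the spanning-tree comparison in part~\ref{tt.RSsmallp}. The phenomenon being exposed is that the perfect matching of $W$ which minimizes $d_4$ (the rail matching) is not the one which maximizes $t$; this tension is precisely what precludes a uniformly optimal graph in~$\G_{12q-8,12q-4}$, and the mod‑12 arithmetic arises because $r=-4$ is the unique residue that pins $\piv$ down to a constant multiset on the eight vertices of $W$.
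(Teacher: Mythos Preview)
Your plan for part~\ref{tt.RSlargep} is essentially the paper's own route: reduce via Theorem~\ref{t.d2} and Theorem~\ref{t.d3min2} to balanced subdivisions of those $D\in\D_4$ without nontrivial 3-bonds (namely $W$ and the Cube~$C$) carrying a perfect-matching placement of the four shorter chains, then separate the five resulting candidates (three matchings in~$W$, two in~$C$) at the $d_4$ level. One correction: your parenthetical ``(necessarily trivial) 4-bonds of~$D$'' is false. Both $W$ and~$C$ have nontrivial 4-bonds (two and three respectively, one per pair of disjoint 4-cycles; cf.\ the proof of Theorem~\ref{t.Wlargem}\ref{tt.Wlargem.largep}). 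Corollary~\ref{c.bonds} still computes $b_4(G)$ correctly as a sum over \emph{all} 4-bonds of~$D$, but if you literally transplant the per-edge $\pie$/$\varphi_{4,4}$ tabulation from the proof of Theorem~\ref{t.k3}---which relies on $K_{3,3}$ having only trivial 4-bonds---you will miss the nontrivial contributions and get the wrong comparison.

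For part~\ref{tt.RSsmallp} your candidate $G'$ differs from the paper's, and the specific rearrangement matters. The paper takes $G'$ to have rail chain lengths $(q{-}1,\,q,\,q{-}1,\,q,\,q,\,q{-}1,\,q,\,q{-}1)$ around the 8-cycle---still four shorter rails, but \emph{not} a matching---and shows $d_5(G')<d_5(G)$, equivalently $t(G')>t(G)$. Your ``four consecutive rails'' suggestion is unverified (you say ``may have''); you must either check that it actually increases the spanning-tree count or adopt the known-good arrangement. Relatedly, your closing diagnosis is slightly off: the tension is not between two perfect-matching placements (all of those tie on $d_3$), but between the $d_4$-minimizing alternating rail matching and a \emph{non}-matching rail arrangement that sacrifices $d_3$-optimality to gain spanning trees.
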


\begin{rmks}
	\label{r.RomSafe}
	\begin{eninline}[label={(\arabic*)}, before={}]
		\item Originally about simple graphs, the supplementary proof below ensures that the theorem holds for multigraphs.
		\item The proof in~\cite{RomeroSafe} uses Theorem~10 of Wang and Zhang~\cite{Wang97}, which we consider unreliable (see discussion in Section~\ref{ss.3dis}). With the supplement below, there is no dependency upon~\cite{Wang97} (nor upon any other paper).
	\end{eninline}
\end{rmks}

\begin{proof}[Supplementary proof of Theorem \ref*{t.RomSafe} (see remarks above)]
	To identify the graphs which are $p$-optimal for sufficiently large~$p$ (henceforth: \emph{large-$p$-optimal}), we want to successively minimize the coefficients $d_i(\ph)$ of~\eqref{dcoeff} for $i$~up to~5 (the first is trivial). By Theorem~\ref{t.d2}\ref{tt.d2iff}, the graphs which minimize $d_2(\ph)$ are the balanced subdivisions denoted by~$\B_m(\D_4)$, where $m=12q-4$.
	By Theorem~\ref{t.d3min2}, subdivisions of $\D_4$-graphs which contain a~3‑cycle (see Figure~\ref{f.Dgraphs}) cannot minimize $d_3(\ph)$, since the cycles imply nontrivial 3‑bonds. The remaining distillations are the Wagner graph~$W$ and the Cube~$C$.
	
	By the same proposition, $d_3(\ph)$ is minimized in $\B_m({W,C})$ exactly when the correspondingly weighted $W$~or~$C$ has balanced $\piv$-values. We have four edges of weight~$q-1$, since $r=-4$, and eight vertices, so the lighter edges should form a perfect matching in $W$~or~$C$. There are three ``nonisomorphic'' perfect matchings in~$W$ and two in~$C$, as shown in~\cite{RomeroSafe}, so we have five graphs remaining. Among them, it was also shown that $d_4(\ph)$ is minimized exclusively by the $W$‑subdivision for which the perfect matching is contained in the 8‑cycle. 
	This proves part~\ref*{tt.RSlargep}.
	
	Now, let $G'\in \G_{12q-8,12q-4}$ be the balanced subdivision of $W$ such that, going around the cycle of rails, the corresponding chains have lengths $(\tilde{q}, q, \tilde{q}, q,\ab q, \tilde{q}, q, \tilde{q})$, where $\tilde{q} = q-1$. By showing that $d_5(G')<d_5(G)$, one obtains~\ref*{tt.RSsmallp}.
\end{proof}

The heuristics behind Theorem~\ref{t.Wlargem} below, which we now explain, strongly suggests that for sufficiently large~$m$, there is no set $\G_{m-4,m}$ with a uniquely optimal graph. The Wagner graph, which is the 4‑Möbius ladder, is not edge-transitive; by Lemma~\ref{l.Wtrees}, the rails belong to slightly more spanning trees than the rungs. For $W$‑subdivisions, this should mean that the rails are ``more important'' for connectedness when $p$ is small, and hence that the rail chains should be slightly shorter than the rung chains---presumably by a constant factor---to maximize $R(G,p)$. In Proposition~\ref{p.Wtreepoly}\ref{pp.Wmax}, a continuous version of the problem will suggest $\sqrt{10}-2$ as the proportionality constant.

\begin{thm}
	\label{t.Wlargem}
	Consider \(\G_{12q-4,12q}\), where \(q\geq 1\).
	\begin{enumenun}
		\item\label{tt.Wlargem.largep}
			For sufficiently large~\(p\), there is a unique \(p\)‑optimal graph, namely the perfectly balanced \(W\)‑subdivision \(G\).
		\item\label{tt.Wlargem.smallp}
			For \(q\geq8\), there is an imbalanced subdivision of~\(W\) with more spanning trees than~\(G\), which is therefore strictly more reliable than~\(G\) for sufficiently small~\(p\). Hence, there is no uniquely optimal graph in~\(\G_{12q-4,12q}\).
	\end{enumenun}
\end{thm}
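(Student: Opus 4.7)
My plan is to successively refine the coefficients $d_2, d_3, d_4$ in the unreliability expansion~\eqref{dcoeff} to identify a unique large-$p$ winner, paralleling the strategy used for Theorem~\ref{t.RomSafe}. For part~\ref{tt.Wlargem.largep}: by Theorem~\ref{t.d2}\ref{tt.d2iff}, any $d_2$-minimizing graph in $\G_{12q-4,12q}$ is a balanced weak subdivision of some $D\in\D_4$; since $m\equiv 0\pmod{12}$ gives $r=0$, each such graph is perfectly balanced, with every chain of length~$q$. Theorem~\ref{t.d3min2} then forces $D$ to have only trivial 3-bonds, and inspecting Figure~\ref{f.Dgraphs} leaves only the Wagner graph $W$ and the cube $C$ (the other members of $\D_4$ contain triangles, hence nontrivial 3-bonds). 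The $\piv$-values are automatically balanced when $r=0$.

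With two candidates remaining, both share the same $d_2$ and the same $d_3$ (each has $b_3 = 8q^3$ and no nontrivial 3-bonds, and the $d_{3,2}$ piece depends only on the common chain structure). Writing $d_4 = d_{4,2}+d_{4,3}+d_{4,4}$, the first two summands again depend only on the common chain and trivial-3-bond structure, so the first distinguishing coefficient is $d_{4,4}=b_4$. By Corollary~\ref{c.bonds}, $b_4 = b_4(D)\cdot q^4$. A direct enumeration of 4-cuts gives $b_4(W)=14$ (the 12 trivial 4-bonds isolating an edge, plus 2 nontrivial 4-bonds separating $\{v_i,v_{i+1},v_{i+4},v_{i+5}\}$ from its complement for $i\in\{0,1\}$) and $b_4(C)=15$ (12 trivial plus 3 from opposite-face pairs). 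Hence $G$ strictly beats the cube subdivision at $d_4$; every other graph in $\G_{12q-4,12q}$ is beaten strictly at $d_2$ or $d_3$ by the previous steps, so $G$ is uniquely $p$-optimal for sufficiently large~$p$.

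For part~\ref{tt.Wlargem.smallp}, the plan is to exhibit an imbalanced $W$-subdivision $H\in\G_{12q-4,12q}$ with $t(H)>t(G)$, from which Proposition~\ref{p.largesmallp}\ref{pp.spanningtrees} yields the claim. Fix an incident rail--rung pair $e_R,e_G$ of $W$ and let $H$ assign length $q-1$ to~$e_R$, length $q+1$ to~$e_G$, and length $q$ to every other chain (so $\sum\ell_i=12q$). Since each spanning tree of $H$ is built from a spanning tree of $W$ by choosing one deleted edge per cotree chain, partitioning spanning trees of $W$ according to whether the cotree contains $e_R$ and $e_G$ yields
\[ t(H)-t(G) \;=\; q^4(c_R-c_G) \;-\; q^3\, n_{11}, \]
where $c_R$ (resp.~$c_G$) counts spanning trees of $W$ containing a given rail (resp.~rung), and $n_{11}$ counts spanning trees of $W$ avoiding both $e_R$ and~$e_G$.

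All three quantities come from matrix-tree computations. Applying Kirchhoff's formula to the circulant weighted Laplacian of $W$ (rail-weight $\alpha$, rung-weight $\beta$) gives the factorization
\[ t_w(W) \;=\; 8\alpha^3\bigl(\alpha^2 + 4\alpha\beta + 2\beta^2\bigr)^2, \]
whose monomial coefficients $8,64,160,128,32$ of $\alpha^i\beta^{7-i}$ for $i=7,\ldots,3$ count spanning trees with $i$~rails and $7-i$~rungs; summing with appropriate weights yields $c_R=231$ and $c_G=224$, hence $c_R-c_G=7$. For $n_{11}$, compute $n_{00}:=t(W/e_R/e_G)$ from the contracted six-vertex multigraph (which carries a single double edge) to obtain $n_{00}=114$, from which inclusion--exclusion gives $n_{11}=n_{00}-63=51$. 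Substituting produces $t(H)-t(G)=q^3(7q-51)$, which is strictly positive exactly when $q\geq 8$. The main technical obstacle is the bookkeeping for $n_{00}$, since the contracted graph loses the circulant symmetry of~$W$; fortunately the edge-transitivity of $W$ within its two orbits makes $n_{00}$ independent of the chosen incident pair, so the construction is robust.
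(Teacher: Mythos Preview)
Your proof is correct and follows essentially the same route as the paper. Part~\ref{tt.Wlargem.largep} is identical in structure: successive minimization of $d_2,d_3,d_4$ via Theorems~\ref{t.d2}\ref{tt.d2iff} and~\ref{t.d3min2}, reduction to $W_q$ versus $C_q$, and the tie-break at $b_4$ using the nontrivial $4$-bond counts $2$ versus $3$. Part~\ref{tt.Wlargem.smallp} uses the same construction (move one edge from a rail chain to an adjacent rung chain) and reaches the same expression $t(H)-t(G)=q^3(7q-51)$.

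The only genuine difference is computational: the paper obtains the four spanning-tree counts $114,117,110,51$ by a direct combinatorial enumeration (Lemma~\ref{l.Wtrees}), whereas you extract $c_R=231$ and $c_G=224$ from the weighted circulant Laplacian factorization $t_w(W)=8\alpha^3(\alpha^2+4\alpha\beta+2\beta^2)^2$ and then recover $n_{11}=n_{00}-63$ via inclusion--exclusion together with a single matrix-tree evaluation of $t(W/e_R/e_G)=114$. Your method is arguably cleaner for $c_R,c_G$ (the circulant structure does the work), at the cost of one determinant on a specific $6$-vertex multigraph; the paper's enumeration is more self-contained but lengthier. Either way the arithmetic lands on $(7q-51)q^3$, and your observation that the dihedral symmetry of $W$ places all incident rail--rung pairs in a single orbit correctly justifies that the value of $n_{00}$ is independent of the chosen pair.
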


\begin{proof}[Proof of part~\ref*{tt.Wlargem.largep}]
	As in the proof of Theorem~\ref{t.RomSafe}, we will successively minimize $d_2(\ph)$, $d_3(\ph)$ and~$d_4(\ph)$. Since $m=12q$, there is only one balanced graph for each of the distillations in~$\D_4$, and by Theorem~\ref{t.d2}\ref{tt.d2iff}, these are exactly the graphs which minimize $d_2(\ph)$. Among these four graphs, $d_3(\ph)$ is minimized exclusively by the subdivision of~$W$ and that of the Cube~$C$, by Theorem~\ref{t.d3min2} (the $\piv$-condition is vacuous). Since the chains have length~$q$, we denote the former graph by~$W_q$ and the latter by~$C_q$.
	
	We now wish to show $d_4(C_q)-d_4(W_q) > 0$. A~pairing of the chains of $W_q$~and~$C_q$ induces a pairing of the $(4,2)$-disconnections, so $d_{4,2}(W_q)=d_{4,2}(C_q)$. All 3‑bonds of $W_q$~and~$C_q$ are trivial, and it is easy to see that $d_{4,3}(W_q)=d_{4,3}(C_q)$. We also note that the graphs have the same number of trivial 4‑bonds. Denoting the number of nontrivial 4‑bonds by~$b_4^\n(\ph)$, we therefore have $d_4(C_q)-d_4(W_q) = b_4^\n(C_q) - b_4^\n(W_q)$. By inspection, one can see that both $W$ and~$C$ has exactly one nontrivial 4‑bond for each pair of disjoint 4‑cycles, so $b_4^\n(W)=2$ and $b_4^\n(C) = 3$, and hence $b_4^\n(W_q) = 2q^4$ and $b_4^\n(C_q) = 3q^4$.  Therefore, $d_4(C_q)-d_4(W_q) = q^4 > 0$, and we have shown that $W_q$ is the unique large-$p$-optimal $(12q-4, 12q)$-graph.
\end{proof}
	
\begin{proof}[Proof of part \ref*{tt.Wlargem.smallp}] 
	In $W_q$, we now fix a rail chain which we denote by~$\ell$ and a rung chain adjacent to~$\ell$ which we denote by~$g$. Let $W'_q$ denote the graph obtained by moving an edge from~$\ell$ to~$g$. We show that $W'_q$ has more spanning trees than~$W_q$, i.e.\ that $t(W'_q)-t(W_q)>0$. Considering that each spanning tree of $W_q$ and~$W'_q$ naturally maps to a spanning tree of~$W$, we can obtain $t(W_q)$ and~$t(W'_q)$ from the spanning trees of~$W$ and the chain lengths of $W_q$ and~$W'_q$, respectively. (For each spanning tree of~$W$, the relevant chains are those which do \emph{not} map to edges in the tree.)
	
	As is well known, $t(W)=392$; this also follows from Lemma~\ref{l.Wtrees}\ref{ll.Wrungs} below. Hence, $t(W_q) = 392q^5$. Using Lemma~\ref{l.Wtrees}\ref{ll.Wparticular}, we similarly obtain that $t(W'_q) = 114q^5 + 117(q+1)q^4 + 110(q-1)q^4 + 51(q-1)(q+1)q^3$. It follows that 
	\begin{equation}
		t(W'_q)-t(W_q) = (117-110)q^4-51q^3 = (7q-51)q^3 \,,
	\end{equation}
	which is positive since $q\geq 8$. By Proposition~\ref{p.largesmallp}\ref{pp.spanningtrees}, $W'_q$ is strictly more reliable than~$W_q$ for sufficiently small~$p$.
\end{proof}

\begin{lem}
	\label{l.Wtrees}
	\begin{eninline}[label={\normalfont(\alph*)}, ref=(\alph*)]
		\item~Let \(t_i\) be the number of spanning trees of \(W\) which contain exactly \(i\)~rungs. Then \((t_i)_0^4 = (8,64,\ab 160,\ab 128,32)\). \label{ll.Wrungs}
		\item~Let \(g\) be a particular rung of~\(W\) and let \(\ell\) be a rail adjacent to~\(g\). Out of the 392~spanning trees of~\(W\), 114~contain both \(\ell\) and~\(g\), 117 contain \(\ell\) but not~\(g\), 110 contain \(g\) but not~\(\ell\) and 51 contain neither \(\ell\) nor~\(g\). \label{ll.Wparticular}
	\end{eninline}
\end{lem}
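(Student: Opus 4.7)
The plan is to compute all eleven counts via the deletion--contraction identity $t(H) = t(H{-}e) + t(H/e)$ combined with direct enumeration, exploiting the small size and symmetries of the Wagner graph. Label the vertices cyclically $1,\ldots,8$, with rails $\{i,i{+}1\}$ (indices modulo $8$) and rungs $\{i,i{+}4\}$.

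For part~\ref{ll.Wrungs}, the extreme values are immediate: $t_0 = 8$, since a spanning tree of $W$ using no rung is a spanning tree of the rail $8$-cycle, obtained by removing any single rail; and $t_4 = 4 \cdot 2^3 = 32$, since after placing all four rungs the four pair-components form a $4$-cycle with each edge doubled. For $t_1$, by rung-transitivity I fix a rung $g$ and count the $6$-element rail subsets whose complement (two omitted rails) separates the endvertices of $g$ on the rail $8$-cycle; the two omitted rails must lie on opposite $4$-arcs, giving $4 \cdot 4 = 16$ trees per rung and $t_1 = 64$. For $t_3$, by transitivity on the omitted rung I fix one (say $\{4,8\}$); the five components form a small quotient multigraph whose spanning-tree count is $32$ by a $4 \times 4$ Laplacian cofactor (or by a short case split on the tree-degrees of the two singleton components), giving $t_3 = 4 \cdot 32 = 128$. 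Finally $t_2 = t(W) - t_0 - t_1 - t_3 - t_4 = 392 - 232 = 160$, where $t(W) = 392$ comes either from the closed form $t(M_n) = \tfrac{n}{2}\bigl[(2+\sqrt{3})^n + (2-\sqrt{3})^n + 2\bigr]$ for the $n$-Möbius ladder or from a direct matrix-tree computation on $W$.

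For part~\ref{ll.Wparticular}, with $g$ a rung and $\ell$ an adjacent rail, the four numbers satisfy
\[
N_{00}+N_{01}+N_{10}+N_{11} = t(W) = 392.
\]
By rung-transitivity, $4\, t(W/g) = \sum_i i\,t_i = 896$, so $N_{10}+N_{11} = t(W/g) = 224$; by rail-transitivity, $8\, t(W/\ell) = 7\,t(W) - \sum_i i\,t_i = 1848$, so $N_{01}+N_{11} = t(W/\ell) = 231$. This system has rank $3$, so it remains only to compute one of the four numbers directly, for instance $N_{11} = t(W/g/\ell)$: contracting both edges merges $1,2,5$ into a single vertex and yields a multigraph on $6$ vertices and $10$ edges (with one pair of parallel edges), whose spanning-tree count equals $114$ via a $5 \times 5$ Laplacian cofactor. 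The remaining values then follow: $N_{10} = 110$, $N_{01} = 117$, $N_{00} = 51$.

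The main obstacle is bookkeeping in the reductions: each contraction can create parallel edges (which contribute distinct spanning-tree choices but are not themselves further contractible) and loops (which must be discarded), and the Laplacian cofactor computations must be carried out on the correct multigraph. Once this is handled cleanly, all eleven claimed values follow mechanically.
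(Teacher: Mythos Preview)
Your proposal is correct and takes a genuinely different route from the paper's proof. The paper proceeds by pure combinatorial case analysis: for each $i\in\{0,\dots,4\}$ it directly enumerates the spanning trees with exactly $i$ rungs, simultaneously tracking which of these contain the fixed rung~$g$ and rail~$\ell$, and then sums the resulting sequences $(t_i^g)$, $(t_i^\ell)$, $(t_i^{g\ell})$ to obtain the four numbers in part~(b). Your approach is more algebraic and symmetry-driven: you get $t_0,t_1$ by direct counting, $t_4$ and $t_3$ via quotient multigraphs (the latter by a matrix-tree cofactor), and $t_2$ by subtraction from the known value $t(W)=392$; for part~(b) you exploit rung- and rail-transitivity to express three of the four counts as linear combinations of the $t_i$, reducing everything to the single computation $N_{11}=t(W/g/\ell)=114$. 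Your route is shorter and avoids the delicate case splits (especially the paper's $i=2$ and $i=3$ analyses), at the cost of invoking external machinery---the matrix-tree theorem and the closed form for $t(M_n)$---and of leaving the two Laplacian cofactors ($32$ and $114$) as black-box computations. The paper's approach is self-contained and elementary, but considerably more laborious. Both are sound; yours is the cleaner argument once one accepts the matrix-tree computations as routine.
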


\begin{proof}
	The above lemma can of~course be verified more or less manually. We will give one way to systematically count the spanning trees of~$W$ and obtain the desired numbers. Out of the spanning trees (henceforth, just ``trees'') with exactly $i$~rungs, counted by~$t_i$, let $t_i^g$ count those which contain~$g$, let $t_{i}^{\ell}$ count those which contain~$\ell$ and let $t_{i}^{g\ell}$ count those which contain both. By symmetry we have, $t_{i}^{g} = \frac{i}{4}t_i$, and since a spanning tree with $i$~rungs contains $7-i$~rails, we likewise have $t_{i}^{\ell} = \frac{7-i}{8}t_i$. We refer to the representation of~$W$ in Figure~\ref{f.wagner} and consider in turn $i=0,1,2,3,4$.
	
	$i=0$: Deleting all rungs gives an 8‑cycle of rails, and subsequently deleting any rail gives a~tree. Hence, $t_0=8$, $t_0^\ell = 7$, $t_0^g = 0$ and $t_0^{g\ell} = 0$.
	
	$i=1$: The rung~$g$ partitions the rail cycle into two parts. Combine $g$ with three out of the four rails on each side to obtain a~tree. Hence, $t_1^g = 16$. One-fourth of these trees do~not include~$\ell$, so $t_1^{g\ell}=12$.
	
	$i=2$: Let $t_2^g = t^{g}_{2'} + t_{2''}^{g}$, where $t^{g}_{2'}$ counts the trees in which the two rungs belong to the same 4‑cycle and $t^{g}_{2''}$ those in which they do~not (and~so, in our picture, form a~cross). For~$t^{g}_{2''}$, the two rungs partition the rails into four sets of~two. Choose one set to connect the rungs and then choose one rail from each of the three remaining sets. This gives $t^{g}_{2''}=4\cdot2^3=32$. Five-eighths of these contain~$\ell$, so $t_{2''}^{g\ell}=20$. For~$t^{g}_{2'}$, the two rungs partition the rails into two sets of three and two singletons, and the second rung determines whether $\ell$ is in a~3‑set or single. Suppose that $\ell$ is in a~3‑set. The rungs must be connected by either a singleton or by a~3‑set. In the former case, choose a singleton and then two out of three edges from each 3‑set. This gives 18~trees, 12~of which contain~$\ell$. In the latter case, choose a~3‑set and then two edges from the other 3‑set. This gives 6~trees, 5~of which contain~$\ell$. Now suppose that $\ell$ is a singleton. This likewise gives 24~trees, but only~9 (half of the trees in ``the former case'') contain~$\ell$. The cases sum to $t^{g}_{2'} = 48$ and $t_{2'}^{g\ell}=26$, and finally to $t^{g}_{2} = 80$ and $t_{2}^{g\ell}=46$.
	
	$i=3$: Three rungs partition the rails into four singeltons and two sets of two. Starting with~$g$, there are two ways to choose two more rungs so that $\ell$ is a singleton, and one way so that $\ell$ is in a~2‑set. Suppose that $\ell$ is a singleton. Either there is a 2‑set with both edges included in the tree, or there is not. In the former case, choose such a~2‑set, then choose one edge from the other 2‑set, and finally choose one of the remaining four rails. This gives $2^3\cdot 4 = 32$ trees, 8~of which contain~$\ell$. In the latter case, choose one edge from each of the 2‑sets, and then choose two out of the remaining four rails without creating a~4‑cycle. This gives $2^3(6-2)=32$ trees, 16~of which contain~$\ell$. Now suppose that $\ell$ is in a~2‑set. In ``the former case'' above, there are 16~trees, 12~of which contain~$\ell$. In ``the latter case'', there are 16~trees, 8~of which contain~$\ell$. All in all, $t^{g}_{3} = 96$ and $t_{3}^{g\ell}=44$.
	
	$i=4$: The trees are obtained by choosing three rails without creating a~cycle, which can only be created by including two opposite rails. Hence, $t_4^g = 32$, and three-eighths include~$\ell$, so $t_4^{g\ell}=12$.
	
	Putting our numbers together and using the relations between $t_i$, $t_i^g$ and~$t_i^\ell$, we obtain $(t_i)_0^4 = (8,64,\ab 160,\ab 128,32)$, $(t_i^g)_0^4 = (0,16,\ab 80,\ab 96,32)$, $(t_i^\ell)_0^4 = (7,48,\ab 100,\ab 64,12)$ and $(t_i^{g\ell})_0^4 = (0,12,\ab 46,\ab 44, 12)$. By summing these sequences, $W$~has 392~trees, out of which 224 contain $g$, 231 contain $\ell$ and 114 contain both. Hence, 110~trees contain $g$ but not~$\ell$, 117 contain $\ell$ but not~$g$, and $392-(114+110+117)= 51$ contain neither $\ell$ nor~$g$.
\end{proof}

The next proposition relates the number of spanning trees to how the edges are distributed between the rails and rungs, and motivates Conjecture~\ref{cj.W}\ref{cjj.Wratio}.

\begin{prop}\ 
	\label{p.Wtreepoly}
	\begin{enumenun}
		\item\label{pp.Wnumber} Consider a subdivision~\(G\) of the Wagner graph, in which the rail chains have length~\(\ell\) and the rung chains have length~\(g\). The number of spanning trees of~\(G\) is
		\begin{equation}
			\label{Wsubtrees}
			t(G) = 8g^4\ell + 64g^3\ell^2 + 160g^2\ell^3 + 128 g\ell^4 + 32\ell^5 \,.
		\end{equation}
		\item\label{pp.Wmax} Consider a weighted Wagner graph~\(G\), where the rail weights~\(\ell\) and the rung weights~\(g\) are positive real numbers. Fixing \(m=8\ell+4g\) and then letting \(\ell\)~vary, the above expression \(t(G)\) is maximized if and only if
		\begin{equation}
			\frac{g}{\ell} = \sqrt{10}-2 \,.
		\end{equation}
	\end{enumenun}
\end{prop}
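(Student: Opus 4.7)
For part~\ref*{pp.Wnumber}, the plan is to set up a bijection between spanning trees of~\(G\) and pairs \((T, f)\), where \(T\) is a spanning tree of~\(W\) and \(f\) assigns one edge to be deleted from each chain of~\(G\) whose corresponding \(W\)-edge lies outside~\(T\). In the forward direction, the described deletion yields a connected subgraph with \(|V(G)|-1\) edges, hence a spanning tree. Conversely, any spanning tree \(T' \subseteq G\) deletes exactly \(|E(G)|-|V(G)|+1=5\) edges, and at most one edge per chain (else some internal vertex or subpath of a chain is cut off from the rest of~\(T'\)); the seven intact chains therefore correspond to seven edges of~\(W\) which must be acyclic (else \(T'\) would contain a cycle) and spanning (else some \(W\)-vertex and its adjacent internal vertices form an isolated component of~\(T'\), since broken chains do not connect their \(W\)-endpoints). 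Hence the intact chains correspond to a spanning tree of~\(W\), and each broken chain loses exactly one edge. A spanning tree of~\(W\) with \(i\) rungs has \(4-i\) non-tree rungs and \(i+1\) non-tree rails, so it contributes \(g^{4-i}\ell^{i+1}\) spanning trees of~\(G\); summing with the coefficients \((t_i)_0^4 = (8, 64, 160, 128, 32)\) of Lemma~\ref{l.Wtrees}\ref{ll.Wrungs} gives the formula~\eqref{Wsubtrees}.

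For part~\ref*{pp.Wmax}, I would apply Lagrange multipliers to the constraint \(h = 8\ell + 4g = m\). Since \(\nabla h = (4, 8)\), the critical condition reads \(\partial t/\partial \ell = 2\,\partial t/\partial g\); after evaluation using~\eqref{Wsubtrees}, division by~\(8\), and the substitution \(x = g/\ell\), this simplifies to the quartic
\begin{equation*}
	x^4 + 8x^3 + 12x^2 - 16x - 12 = 0.
\end{equation*}
The central algebraic step, which I expect to be the main (if modest) obstacle, is to find the factorization
\begin{equation*}
	x^4 + 8x^3 + 12x^2 - 16x - 12 = (x^2 + 4x - 6)(x^2 + 4x + 2),
\end{equation*}
easily verified by polynomial long division. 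The four roots are \(-2 \pm \sqrt{10}\) and \(-2 \pm \sqrt{2}\); only \(x = \sqrt{10} - 2\) is positive, so it is the unique interior critical point on the open segment \(\{\ell, g > 0 : 8\ell + 4g = m\}\).

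To conclude that this critical point is the unique maximum, I would compare with the boundary behaviour of the continuous extension: \(t \to 0\) as \(\ell \to 0\), while at \(g = 0\) one obtains \(t = 32(m/8)^5 = m^5/1024\). A short direct computation (using homogeneity to write \(t = \ell^5 P(g/\ell)\) with \(\ell = m/(4(x+2))\)) shows that the interior value strictly exceeds \(m^5/1024\); by continuity on the compact closed segment, the unique interior critical point therefore gives the global maximum, and uniqueness follows from the fact that \(\sqrt{10}-2\) is the only positive root of the critical equation.
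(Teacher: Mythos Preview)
Your argument for part~\ref*{pp.Wnumber} is essentially the paper's: it, too, says that the spanning trees of~$W$ induce a partition of the spanning trees of~$G$, counts $g^{4-i}\ell^{i+1}$ per $W$-tree with $i$~rungs, and sums using Lemma~\ref{l.Wtrees}\ref{ll.Wrungs}. Your bijection argument just spells out the correspondence more explicitly.

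For part~\ref*{pp.Wmax}, your Lagrange-multiplier route is correct but considerably more work than the paper's. The paper substitutes $g = m/4 - 2\ell$ directly into~\eqref{Wsubtrees} and observes that the result collapses to
\[
t(G) = \tfrac{1}{32}\,\ell\,(m^2 - 32\ell^2)^2,
\]
after which a one-line derivative gives the unique interior maximum at $\ell = m/(4\sqrt{10})$, hence $g/\ell = \sqrt{10}-2$. This miraculous simplification comes from the hidden factorization $t(G) = 8\ell(g^2 + 4g\ell + 2\ell^2)^2$, which you do not exploit; once you have it, no quartic, no boundary comparison, and no appeal to compactness is needed. Your approach is self-contained and perfectly valid, but the paper's is shorter by an order of magnitude and avoids the somewhat \emph{ad hoc} quartic factorization step you flagged as the main obstacle.
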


\begin{proof}[Proof of part~\ref*{pp.Wnumber}]
	Given a spanning tree of the Wagner graph with exactly $i$~rungs, there are $4-i$~rungs and $i+1$~rails which are not contained in the tree. Consider that the spanning trees of~$W$ induce a partitioning of the spanning trees of~$G$
	and use Lemma~\ref{l.Wtrees}\ref{ll.Wrungs} to obtain~\eqref{Wsubtrees}.
\end{proof}

\begin{proof}[Proof of part~\ref*{pp.Wmax}]
	Using $m=8\ell + 4g$, we substitute~$g$ with $m/4 - 2\ell$ in~\eqref{Wsubtrees} and simplify to obtain 
	\begin{equation}
		\label{Wrailtrees}
		t(G) = \frac{1}{32}\ell\left(m^2-32\ell^2\right)^2 .
	\end{equation}
	Since $\ell$ and~$g$ are positive, $\ell\in(0,m/8)$. By elementary calculus, the maximum of~\eqref{Wrailtrees} is obtained for $\ell=m/(4\sqrt{10})$ which yields $g = (\!\sqrt{10}-2)m/(4\sqrt{10})$ and hence $g/\ell = \sqrt{10}-2$.
\end{proof}

Considering the above results, we would like to propose the following progression of specifications, which still do~not paint a complete picture, of the $p$‑optimal $(k=4)$-graphs.

\begin{conj}
	\label{cj.W}
	Consider the sets \(\G_{m-4,m}\), for \(m\geq 4\).
	\begin{enumenun}
		\item\label{cjj.Wbasic} Every \(p\)‑optimal graph is a weak subdivision of the Wagner graph.
		\item For sufficiently large~\(p\), there is a unique \(p\)-optimal graph (necessarily balanced).
		\item In every \(p\)-optimal graph, the set of rung chains and the set of rail chains are each balanced.
		\item\label{cjj.Wratio} As \(p\) decreases and \(m\)~increases, the optimal ratio between the ``average'' rung chain length and the ``average'' rail chain length should approach \(\sqrt{10}-2\).
		\item As a consequence, a uniquely optimal graph exists for only finitely many~\(m\).
	\end{enumenun}
\end{conj}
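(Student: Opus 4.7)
The plan is to establish the five parts in order, using the distillation and edge‑moving machinery of the paper to progressively shrink the search space. For part~\ref{cjj.Wbasic}, Theorem~\ref{t.3ec}\ref{tt.3ec2} reduces attention to bridgeless weak subdivisions of the distillations in~$\D_4$. From Figure~\ref{f.Dgraphs}, $\D_4$~consists of $W$, the cube~$C$, and three further cubic 3‑edge‑connected graphs of exceedance~4 that contain triangles; the triangle‑bearing distillations are immediately ruled out by the $d_3(\ph)$‑argument used in Theorem~\ref{t.d3min2}, since their nontrivial 3‑bonds force a strictly larger $d_3$‑value in every balanced weak subdivision. What remains is to show that for every~$m$ and every $p\in(0,1)$ some balanced $W$‑subdivision strictly dominates every balanced $C$‑subdivision. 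For $m\equiv 0\pmod{12}$ this was done in the proof of Theorem~\ref{t.Wlargem}\ref{tt.Wlargem.largep} through the nontrivial 4‑bond count $b_4^\n(C_q)-b_4^\n(W_q)=q^4$; the remaining eleven residue classes should yield to analogous $(d_4,d_5,\dots)$ comparisons, though each residue will require its own weighting arrangement and its own polynomial in~$q$.

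Part~(b) should follow once part~\ref{cjj.Wbasic} is in hand: restricting to balanced $W$‑subdivisions, one successively minimizes $d_2(\ph), d_3(\ph), d_4(\ph),\dots$ in the style of Proposition~\ref{p.d3min} and Theorem~\ref{t.k3} until a unique balanced subdivision is isolated. Part~(c) I would attack via the surgery framework of Section~\ref{s.move}: given a $W$‑subdivision~$G$ whose rung chains are imbalanced, move one edge between two rung chains to obtain~$G'$ and apply Corollary~\ref{c.move} to verify that $d_i(G')\le d_i(G)$ for every~$i$ in the relevant range, with strict inequality for at least one; this would be the analogue of Lemma~\ref{l.Pibnf} for the $8$‑vertex Möbius ladder, with the same argument repeated for rails. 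For part~\ref{cjj.Wratio}, I would extend Proposition~\ref{p.Wtreepoly}\ref{pp.Wmax} from $t(G)$ to the full reliability polynomial, treating $(\ell,g)$ as continuous parameters and showing that the optimal ratio $g/\ell$ varies monotonically from~$1$ (in the large‑$p$ regime, where balancing all chains minimizes the leading $d_i(\ph)$) down to $\sqrt{10}-2$ as $p\to 0^+$. Part~(e) is then immediate: because $\sqrt{10}-2$ is irrational, once $m$~is sufficiently large the integer‑valued chain lengths cannot simultaneously approximate every $p$‑dependent continuous optimum, so any candidate UMRG is beaten at one extreme of~$p$ by a different integer arrangement.

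The principal obstacle is part~\ref{cjj.Wbasic}: proving that $W$‑subdivisions outperform $C$‑subdivisions in \emph{every} residue class modulo~$12$ and at \emph{every} $p\in(0,1)$. Theorem~\ref{t.RomSafe} already shows that within the $W$‑family alone there are residue‑dependent phenomena where the naive heuristic fails, so one cannot a~priori exclude that some $C$‑subdivision narrowly beats all $W$‑subdivisions in a particular $p$‑window for some residue; ruling this out uniformly in~$m$ appears to require either a unified polynomial identity in the spirit of~\eqref{Wrailtrees} or a finite case analysis whose termination must itself be proved. A closely related difficulty concerns part~(c): the $\piv$‑framework of Section~\ref{ss.3dis} treats all chains symmetrically and does not a~priori distinguish rungs from rails of~$W$, so the coupling between rail balance and rung balance imposed by a fixed total~$m$ will need a dedicated edge‑moving calculation that exploits the vertex‑transitive but not edge‑transitive structure of the Wagner graph.
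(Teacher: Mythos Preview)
The statement you are attempting to prove is labeled a \emph{Conjecture} in the paper, and the paper offers no proof of it. The authors explicitly present Conjecture~\ref{cj.W} as a ``progression of specifications, which still do not paint a complete picture,'' motivated by Theorems~\ref{t.RomSafe} and~\ref{t.Wlargem} and by Proposition~\ref{p.Wtreepoly}. There is nothing to compare your proposal against: the paper leaves all five parts open.

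Your proposal is not a proof but an outline of how one might attack the conjecture, and you correctly identify the principal obstacle yourself. A few of the steps you sketch would not work as stated. For part~\ref{cjj.Wbasic}, Theorem~\ref{t.d3min2} only rules out the triangle-bearing distillations among \emph{balanced} subdivisions and only via $d_3(\ph)$, which controls large-$p$ behaviour; for small~$p$ the relevant coefficient is $d_5(\ph)$ (spanning trees), and nothing in the paper excludes an imbalanced subdivision of, say, $\Pi_3\times K_2$ from maximizing $t(\ph)$ for some~$m$. You also restrict the $W$-versus-$C$ comparison to balanced subdivisions, but part~\ref{cjj.Wbasic} asserts something about \emph{all} $p$-optimal graphs, which by Theorem~\ref{t.Wlargem}\ref{tt.Wlargem.smallp} are known to be imbalanced for small~$p$ once $q\ge 8$. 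Finally, the irrationality argument for part~(e) is not rigorous: the continuous optimum being irrational does not by itself prevent a single integer arrangement from being $p$-optimal for every~$p$; one would need quantitative control on how the discrete optima at different~$p$ separate as $m\to\infty$.
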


\subsection{A weakened conjecture about Petersen subdivisions (\texorpdfstring{$k=5$}{k=5})}
\label{ss.k=5}
The famous Petersen graph (see Figure~\ref{f.petersen}), denoted by~$P$, is well known to be the smallest cubic graph of girth at least~5, and it has only trivial 3‑ and 4‑bonds. Since cycles of length 3~or~4 give rise to nontrivial bonds in cubic graphs (with exceptions when $n\leq6$), the Petersen graph seems especially promising as a distillation for $p$‑optimal graphs. Furthermore, $P$~itself has been shown to be uniformly optimal among simple $(10,15)$‑graphs~\cite{Petersen}.

\begin{figure}[hb!]
	\centering
	\includegraphics{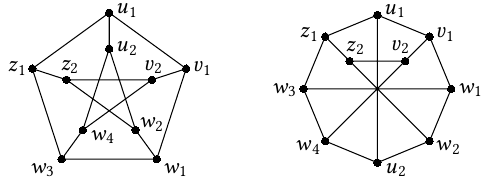}
	\caption{Two representations of the Petersen graph~$P$.}
	\label{f.petersen}
\end{figure}

Ath and Sobel also conjectured a particular sequence of successive subdivisions of~$P$ to yield uniformly optimal $(m-5,m)$-graphs~\cite{AthSobel}. However, Proposition~\ref{p.d3min} together with Theorem~\ref{t.d2}\ref{tt.d2iff} easily disproves the conjecture for all $m$ such that $m\equiv 5$ or $m\equiv 10$ modulo~15.
For such~$m$, the specified subdivisions (Figure~\ref{f.AthSobel})  do~not give balanced $\piv$-values, which implies that they do~not minimize $d_3(\ph)$ among the necessarily bridgeless graphs minimizing $d_2(\ph)$. Hence, they fail to be large-$p$-optimal.

\begin{figure}[b!]
	\centering
	\def\r{1.0}
	\begin{tikzpicture}
		\foreach \a [count=\j, remember=\j as \jj (initially 1)] in {0,45,...,360}{
			\draw({\r*sin(\a)},{\r*cos(\a)}) node[vx] (\j){} node[spv]{} -- (\jj.center);
			}
		\draw (2)--(6) node[vx,pos=0.18](r){} (8)--(4) node[vx,pos=0.18](l){} (r)--(l) (1)--(5) (3)--(7);
		\draw[Red1,  very thick] (1)--(2) (3)--(7) (1)--(5) (6)--(r) (4)--(l);
		\draw[line width=0.5] (1) circle(4pt) (8) circle(4pt);
		\path ($(1)+(0,4pt)$) node[sp]{};

		\foreach \a [count=\j, remember=\j as \jj (initially 1)] in {0,45,...,360}{
			\draw({3.7+\r*sin(\a)},{\r*cos(\a)}) node[vx](\j){} node[spv]{} -- (\jj.center);
			}
		\draw (2)--(6) node[vx,pos=0.18](r){} (8)--(4) node[vx,pos=0.18](l){} (r)--(l) (1)--(5) (3)--(7);
		\draw[Red1,  very thick] (l)--(r)--(6)--(5)--(1)--(2)--(3)--(7)--(8)--(l)--(4);
		\draw[line width=0.5] (l) circle(4pt) (4) circle(4pt);
	\end{tikzpicture}
	\caption{Representations of two infinite sets of balanced Petersen-subdivisions which fail to be uniformly optimal, contrary to a conjecture. Edges represent chains, where thick red chains are one edge longer than black. The different numbers of red chains at the encircled vertices shows that none of the graphs minimize the number of disconnecting sets of size~3.}
	\label{f.AthSobel}
\end{figure}

We also note that the conjectures given in~\cite{AthSobel} for uniformly most reliable $(n, n+6)$-graphs and $(n,n+7)$-graphs both fail in an infinite number of cases for the exact same reason. These conjectures concern subdivisions of two graphs known as the \emph{Yutsis 18j-symbol~F} (named as one of Adolfas Jucys's angular momentum diagrams), which is uniquely optimal among simple $(12,18)$-graphs~\cite{Rsurg}, and the \emph{Heawood graph} in~$\G_{14,21}$.

That the particulars fail for the conjecture in~\cite{AthSobel} does not guarantee that there are no other uniquely optimal subdivisions of~$P$ (or the Yutsis or the Heawood graph) for the given values of~$m$. However, as shown in Proposition~\ref{p.myrvold} below, there is a weak $P$‑subdivision in~$\G_{6,11}$ which is large-$p$-optimal but not small-$p$-optimal. In light of this, we would like to suggest the following weaker conjecture regarding the case $k=5$. (The second statement below follows from the first since $P$~is edge transitive.)

\begin{conj} \label{con.petersen}
	For fixed \(n\geq 1\) and \(m=n+5\), and for each~\(p\), every \(p\)‑optimal \((n,m)\)-graph is a balanced weak subdivision of the Petersen graph. In particular, a uniquely optimal graph exists for each $m=15q + r$ where $q\geq1$ and $r\in\{-1,0,1\}$.
\end{conj}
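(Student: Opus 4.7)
The plan is to follow the template used in the preceding sections for $k\in\{1,2,3\}$. By Proposition~\ref{p.bridge}, any $p$-optimal $(n,n+5)$-graph is bridgeless, and by Theorem~\ref{t.3ec}\ref{tt.3ec2} together with Proposition~\ref{p.equiv}, it is---up to equivalence---a weak subdivision of some $D\in\mathcal{D}_5$; by Theorem~\ref{t.d2}\ref{tt.d2iff} this subdivision must be balanced. The substantive work is therefore to show that among the fourteen graphs in~$\mathcal{D}_5$, only the Petersen graph~$P$ can serve as the distillation of a $p$-optimal graph, and then to rule out imbalanced weak $P$-subdivisions.

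I~would peel off candidates in~$\mathcal{D}_5$ by successively minimizing $d_3$, $d_4$, and, if needed, further $d_i$. Theorem~\ref{t.d3min2} (assuming $m\geq 3k = 15$) eliminates every $D\in\mathcal{D}_5$ containing a triangle, since the three edges exiting a triangle always form a nontrivial 3‑bond. This leaves the triangle-free members of~$\mathcal{D}_5$, including the pentagonal prism $C_5\times K_2$, the 5‑Möbius ladder~$M_5$ and~$P$. To discard the remaining non-Petersen candidates, I~would develop a $d_4$-analogue of Theorem~\ref{t.d3min2}: within the $d_3$-minimizers, a necessary condition for minimizing~$d_4$ is that $D$ have only trivial 4‑bonds, which in a cubic graph amounts to girth~$\geq 5$. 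By the Moore bound, $P$ is the unique 3-regular graph on 10 vertices of girth~$\geq 5$, so this peeling singles out balanced weak $P$-subdivisions.

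For the ``in particular'' clause, $P$ is edge-transitive, so when $m = 15q + r$ with $r\in\{-1,0,1\}$ the multiset of chain lengths together with the balance condition pin the subdivision down to a single graph up to isomorphism, yielding uniqueness as soon as the first assertion is proved.

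The hard part will be twofold. First, a proper analogue of Propositions~\ref{p.K4balanced} and~\ref{p.K33balanced}---that every imbalanced weak $P$-subdivision admits a single edge move between adjacent chains which simultaneously strictly decreases $d_2$, $d_3$, $d_4$, $d_5$ and~$d_6$---requires repeatedly applying Corollary~\ref{c.move} across fifteen chains and the abundant 5- and 6-cycles of~$P$. The $x_{i,j}$ bookkeeping will be substantially heavier than for~$K_{3,3}$, and some corner configurations may, as in the proof of Proposition~\ref{p.K4balanced}, require two consecutive edge moves. Second, formulating and proving the $d_4$- (and possibly $d_5$- and $d_6$-) analogues of Theorem~\ref{t.d3min2}, including a suitable $\pi^{\mathrm{e}}$-type invariant in the spirit of the proof of Theorem~\ref{t.k3}, is a new piece of infrastructure that must be built. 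Finally, echoing Theorem~\ref{t.Wlargem}, I~would not be surprised if for some $m$ with $|r|\geq 2$ uniqueness genuinely fails in a $p$-dependent way---restricting the uniqueness claim to $|r|\leq 1$ in the conjecture is precisely what sidesteps this issue.
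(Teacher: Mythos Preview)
The statement is a \emph{conjecture}, not a theorem: the paper does not prove it and offers no proof to compare against. What the paper does provide is motivation (girth~$\geq 5$ forces only trivial 3- and 4-bonds; Petersen is the unique such cubic graph on ten vertices), a disproof of the specific Ath--Sobel sequence for $m\equiv 5,10\pmod{15}$ via Proposition~\ref{p.d3min}, and Proposition~\ref{p.myrvold} showing that even within balanced weak $P$-subdivisions $p$-optimality can depend on~$p$. Your proposal is thus a sketch of a possible attack on an open problem, not a comparison target.

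On the sketch itself, two remarks. First, the line ``by Theorem~\ref{t.d2}\ref{tt.d2iff} this subdivision must be balanced'' is not justified as stated: Theorem~\ref{t.d2} characterizes $d_2$-minimizers, but a $p$-optimal graph for small~$p$ need not minimize~$d_2$. In the $k\in\{2,3\}$ cases, balance is forced only after proving (Propositions~\ref{p.K4balanced}, \ref{p.K33balanced}) that every imbalanced weak subdivision is beaten in \emph{all} relevant $d_i$ simultaneously---exactly the ``hard part'' you later identify. Second, your peeling argument via $d_3$, $d_4$ (Theorem~\ref{t.d3min2} and a hoped-for analogue) only establishes \emph{large-$p$}-optimality of $P$-subdivisions; the conjecture asserts this for every~$p$. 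For $k=4$ the paper shows (Theorem~\ref{t.Wlargem}) that an imbalanced $W$-subdivision can have more spanning trees than the perfectly balanced one, so the analogue of your ``rule out imbalanced weak $P$-subdivisions'' step could genuinely fail for small~$p$ unless something special about~$P$ (perhaps its edge-transitivity) intervenes. That is precisely why the authors leave this as a conjecture rather than a theorem.
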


The following proposition mainly extends a~result by Myrvold~\cite{MyrvoldShort} to multigraphs. The paper describes a sequence of graph pairs, obtained by edge deletion in~$K_n$ for $n\geq 6$, which demonstrates that UMRGs do~not always exist among simple graphs. (While only the particular pair with exceedance~5 is explicitly covered below, our argument can be adapted to the subsequent graphs.) A half-overlapping similar sequence had already been found by Kelmans~\cite{Kelmans}, with a~focus on spanning trees and proving small-$p$-optimality rather than large-$p$-optimality. The crucial claim is in the proof of~\cite[Theorem~3.3]{Kelmans} and can be traced to~\cite{KelmansTrees}, where it seems to~us that the proof does not apply to multigraphs. 

\begin{prop}
	\label{p.myrvold} There is a unique large-\(p\)‑optimal \((6,11)\)-graph~\(G\) (shown in Figure~\ref{f.myrvold}) which is a weak subdivision of~\(P\), the Petersen graph. A~different weak $P$‑subdivision \(G'\in \G_{6,11}\) has a larger number of spanning trees, and is therefore strictly more reliable than~\(G\) for sufficiently small~\(p\). In particular, there is no uniquely optimal graph in~\(\G_{6,11}\).
\end{prop}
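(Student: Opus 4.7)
The plan is to apply the framework of Sections 3--7 to characterize the large-$p$-optimal graph in~$\G_{6,11}$, then exhibit a concrete competitor $G'$ whose spanning-tree count strictly exceeds that of $G$. The conclusion that no uniquely optimal graph exists will then follow: any graph other than $G$ loses to $G$ for large~$p$, while $G$ itself loses to $G'$ for small~$p$.

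For the first (large-$p$) half, I would successively minimize $d_i(\ph)$ in the style of Theorems~\ref{t.RomSafe} and~\ref{t.Wlargem}. By Proposition~\ref{p.bridge} a $p$-optimal graph is bridgeless; by Theorem~\ref{t.3ec}\ref{tt.3ec2} and Theorem~\ref{t.d2}\ref{tt.d2iff}, any $d_2$-minimizer is, up to equivalence, a balanced weak subdivision of some $D\in\D_5$. Writing $11=15\cdot1-4$ gives $q=1$ and $r=-4$, so such a graph has 11 chains of length~1 and 4 chains of length~0, and the zero chains must form a forest in~$D$ to avoid a zero cycle. Proposition~\ref{p.d3min} then forces balanced $\piv$-values (assuming $D$ has only trivial 3-bonds), which here means the four zero chains form a 4-matching covering 8 of the 10 vertices of~$D$. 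Running through the 14 graphs of~$\D_5$, I would rule out every distillation containing a 3- or 4-cycle by the nontrivial bonds they produce under such a contraction (counted via Corollary~\ref{c.bonds}), leaving only the Petersen graph~$P$.

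For the second half, I restrict to balanced weak $P$-subdivisions obtained by contracting a 4-matching $M\subset P$. Since $\mathrm{Aut}(P)$ does not act transitively on 4-matchings---the two uncovered vertices can either be adjacent in $P$ or at distance~2---there are at least two orbit representatives. For each representative I compute $d_3, d_4, d_5, d_6$ using Corollary~\ref{c.bonds} and identify $G$ of Figure~\ref{f.myrvold} as the unique one minimizing these quantities in order, hence the unique large-$p$-optimal graph. For the small-$p$ half, I use the bijection between spanning trees of the quotient~$G$ and spanning trees of~$P$ containing~$M$; since $P$ has 2000 spanning trees, direct enumeration (aided by the symmetry of $P$) yields $t(G)$ and $t(G')$ for the two orbit representatives. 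Exhibiting a 4-matching $M'$ with $t(G')>t(G)$ then completes the proof via Proposition~\ref{p.largesmallp}\ref{pp.spanningtrees}.

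The main obstacle is the combined case analysis: eliminating the 13 non-Petersen distillations in~$\D_5$ is straightforward but tedious, and the orbit enumeration of 4-matchings in~$P$ together with the $d_i$-comparisons requires care. I expect the two 4-matching orbits to yield graphs which agree on $d_3$ (both $\piv$-balanced) and on $d_4$ (matching trivial-bond counts), differing first at $d_5$ or $d_6$ in a way consistent with $t(G')>t(G)$---precisely the behaviour that makes the spanning-tree count the decisive small-$p$ invariant.
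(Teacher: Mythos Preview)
Your route is genuinely different from the paper's, and considerably more laborious. The paper never touches $\D_5$ at all: it argues directly that a large-$p$-optimal $(6,11)$-graph must have degree sequence $(4,4,4,4,3,3)$ (to have no $2$-bonds and only two trivial $3$-bonds), observes via complements that there are exactly two \emph{simple} graphs with this degree sequence, namely $G$ and $G'$, and distinguishes them by a single $4$-bond. A short extra paragraph then rules out multigraphs with this degree sequence, since a multiple edge forces a nontrivial bond of size at most $4$. The spanning-tree comparison is dispatched by citing $t(G)=224$ and $t(G')=225$.

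Your plan has a real gap at the step ``rule out every distillation containing a $3$- or $4$-cycle by the nontrivial bonds they produce''. Because $m=11<3k=15$, you are in the regime $q=1$, $r=-4$ with four zero chains, and a nontrivial $3$-bond (resp.\ $4$-bond) of $D$ contributes nothing to $b_3(G)$ (resp.\ $b_4(G)$) whenever it meets the contracted matching; Corollary~\ref{c.bonds} makes this explicit. So a short cycle in $D$ does \emph{not} automatically produce a surviving nontrivial bond in $G=D/M$, and you cannot simply discard the thirteen non-Petersen members of $\D_5$ on girth grounds. Theorem~\ref{t.d3min2}, which would let you compare across distillations, does not apply here, and Proposition~\ref{p.d3min} only handles a fixed $D$ with trivial $3$-bonds. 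A correct version of your argument would need, for each such $D$ and each $4$-matching $M$ giving balanced $\piv$-values, to check whether all short cycles of $D$ are hit by $M$ --- and then to realise that the surviving candidates, having degree sequence $(4,4,4,4,3,3)$ and being simple (which itself needs an argument when $D$ has girth $\le 4$), must be $G$ or $G'$ anyway. At that point you have reinvented the paper's shortcut the hard way. Finally, your expectation that $G$ and $G'$ agree on $d_4$ is wrong: $G$ has no $4$-bonds while $G'$ has one, so they separate already at $i=4$, not at $i=5$ or $6$.
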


\begin{figure}[htb!]
	\centering
	\includegraphics{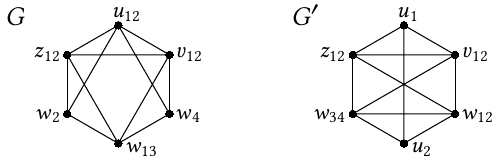}
	\caption{For the $(6,11)$-graphs, $p$-optimality depends upon $p$. The large-$p$-optimal graph is~$G$, while $G'$ is small-$p$-optimal among simple graphs and likely among multigraphs. Both are weak subdivisions of the Petersen graph (cf.\ Figure~\ref{f.petersen}).}
	\label{f.myrvold}
\end{figure}

\begin{proof}
	$G$ and $G'$, shown in Figure~\ref{f.myrvold}, are 3‑edge-connected $(6,11)$-graphs. The vertex labels indicate how the two graphs can be obtained from the Petersen graph in Figure~\ref{f.petersen} by edge contraction, which shows that they are balanced weak subdivisions of~$P$.
	
	By considering their complements, it is easy to see that $G$ and~$G'$ are the only two simple graphs with degree sequence $(4,4,\ab 4,\ab 4,\ab 3,3)$. $G$~has no 2‑bonds and two 3‑bonds (the trivial ones), and clearly, the degree sequence of~$G$ and~$G'$ is necessary to have no 2‑bonds and at most two 3‑bonds in~$\G_{6,11}$. Since $G$ furthermore has no 4‑bonds, while $G'$ has one (isolating its two adjacent 3‑vertices), $G$~alone minimizes $d_2(\ph)$, $d_3(\ph)$ and $d_4(\ph)$ among simple $(6,11)$-graphs.
	
	We now need to show that $\G_{6,11}$ has no large-$p$-optimal graph with a multiple edge. Since the degree sequence of~$G$ is necessary to be large-$p$-optimal, suppose that $G''\in \G_{6,11}$ has the same degree sequence and a multiple edge between $x_1$ and~$x_2$. Then, $x_1$~and~$x_2$ can be isolated from the other vertices by a nontrivial 2‑, 3‑ or 4‑bond. This implies that at least one of $d_2(\ph)$, $d_3(\ph)$ and $d_4(\ph)$ is not minimized by~$G''$, so that $R(G,p)>R(G'',p)$ for $p$ sufficiently large.
	
	The proof is finished by verifying that $t(G')>t(G)$ (see~\cite{MyrvoldShort} or use e.g.\ Kirchhoffs matrix tree theorem to obtain $t(G')=225$ and $t(G)=224$).
\end{proof}

\begin{rmk}
	MacAssey and Samaniego~\cite{McAsseySamaniego} studied the reliability of simple $(6,11)$-graphs under a different model: All edges have \emph{lifetimes} which are independent and identically distributed random variables. It was shown that, with $G$ and~$G'$ as in Figure~\ref{f.myrvold}, the probability that $G'$~fails before~$G$ exceeds~$1/2$. In this weaker sense, called \emph{stochastic precedence}, $G$~was shown to be the single most reliable simple $(6,11)$-graph. 
\end{rmk}

\section*{Acknowledgments}
The authors would like to thank Nathan Kahl and Kristi Luttrell for pointing out the connection between edge shifts and Whitney twists. The second author acknowledges the support of the Swedish Research Council, grant no.~2020-03763.

\setlength{\bibsep}{4pt plus 1pt minus 1pt}


\end{document}